\newcommand{\subjclass}[2][2010]{%
  \let\@oldtitle\@title%
  \gdef\@title{\@oldtitle\footnotetext{#1 \emph{Mathematics subject classification.} #2}}%
}
\newcommand{\keywords}[1]{%
  \let\@@oldtitle\@title%
  \gdef\@title{\@@oldtitle\footnotetext{\emph{Key words and phrases.} #1.}}%
}
\pgfplotsset{compat=newest}
\def\var{{\rm Var}}
\def\cov{{\rm Cov}}
\def\XXint#1#2#3{{\setbox0=\hbox{$#1{#2#3}{\int}$ }
\vcenter{\hbox{$#2#3$ }}\kern-.6\wd0}}
\newcommand\error{\varepsilon}
\newcommand{\qe}{E_\omega}
\newcommand{\E} {\textnormal{\textsf{E}}}
\newcommand{\dd}{\mathrm{d}}
\newcommand{\B}{\mathbb{B}}
\newcommand{\R} {\mathbb{R}}
\newcommand{\Q} {\mathbb{Q}}
\newcommand{\Z} {\mathbb{Z}}
\newcommand{\nn}{\nonumber}
\newcommand{\N} {\mathbb{N}}
\newcommand{\dist} {\textnormal{dist}}
\newcommand{\tx}{\mathscr{H}}
\newcommand{\evp}[1]{\bar\omega^{#1}}
\newcommand{\vd}[1]{\partial'_{#1}}  
\newcommand{\fct}{\mathrm{H}} 
\newcommand{\der}[1]{{\mathcal D}_{#1}}
\newcommand{\nder}[1]{{\mathbb D}_{#1}}
\newcommand{\apk}{\phi^{\scriptscriptstyle\rm AP}}
\newcommand{\apg}{G^{\scriptscriptstyle\rm AP}}
\newcommand{\lock}{\phi^{\scriptscriptstyle\rm loc}}
\newcommand{\locg}{G^{\scriptscriptstyle\rm loc}}
\newcommand{\loca}{A^{\scriptscriptstyle\rm loc}}
\newcommand{\bern}{\mathfrak{B}}
\newcommand{\krt}{{\pmb{\phi}}}
\DeclarePairedDelimiter{\abs}{\lvert}{\rvert}
\DeclarePairedDelimiter{\norm}{\lVert}{\rVert}
\providecommand{\Abs}[1]{\Bigr\lvert#1\Bigl\rvert}
\providecommand{\mb}[1]{\mathbb{#1}}
\providecommand{\ms}[1]{\mathscr{#1}}
\providecommand{\mf}[1]{\mathfrak{#1}}
\DeclareMathOperator*{\osc}{osc}
\newcommand{\tr}{{\rm tr}}
\newcommand{\diag}{{\rm diag}}
\newtheorem{thmx}{Theorem}
\newtheorem{propx}[thmx]{Proposition}
\newtheorem{theorem}{Theorem}
\newtheorem{lemma}[theorem]{Lemma}
\newtheorem{corollary}[theorem]{Corollary}
\newtheorem{proposition}[theorem]{Proposition}
\theoremstyle{definition}  
\newtheorem{definition}[theorem]{Definition}
\newtheorem{remark}[theorem]{Remark}
\begin{document}

\title{Optimal convergence rates in stochastic homogenization in a balanced random environment}

\author[1]{Xiaoqin Guo \thanks{The work of XG is supported by Simons Foundation through Collaboration Grant for Mathematicians \#852943.}}
\author[2]{Hung V. Tran  \thanks{HT is supported in part by NSF CAREER grant DMS-1843320 and a Vilas Faculty Early-Career Investigator Award.}}

\affil[1]
{
Department of Mathematical Sciences, 
University of Cincinnati , 2815 Commons Way, Cincinnati, OH 45221, USA}
\affil[2]
{
Department of Mathematics, 
University of Wisconsin Madison, 480 Lincoln  Drive, Madison, WI 53706, USA}

\subjclass{
35J15 
35J25 
35K10 
35K20 
60G50 
60J65 
60K37 
74Q20 
76M50. 
}

\keywords{
random walks in a balanced random environment; 
non-divergence form difference operators;
invariant measures;
quantitative stochastic homogenization;
quantitative large-scale average;
optimal convergence rates
}

\maketitle

\begin{abstract}
We consider random walks in a uniformly elliptic, balanced, i.i.d. random environment in $\Z^d$ for $d\geq 2$.
We first derive a quantitative law of large numbers for the invariant measure, which is nearly optimal.
A mixing property of the field of the invariant measure is then achieved.
We next obtain rates of convergence for the homogenization of the Dirichlet problem for non-divergence form difference operators, which are generically optimal for $d\geq 3$ and nearly optimal when $d=2$. Furthermore, we establish the existence, stationarity and uniqueness properties of the corrector problem for all dimensions $d\ge 2$. 
Afterwards, we quantify the ergodicity of the environmental process for both the continuous-time and discrete-time random walks, and as a consequence, we get explicit convergence rates for the quenched central limit theorem of the balanced random walk.
\end{abstract}

\tableofcontents


\section{Introduction}\label{sec:intro}
In this paper, we consider random walks in a uniformly elliptic, balanced, i.i.d. random environment in $\Z^d$ for $d\geq 2$.
Our main goals are two-fold.
Firstly, we derive a quantitative large-scale average of the invariant measure, which is nearly optimal, in Theorem \ref{thm:ave-rho-quant}.
A mixing property of the field of the invariant measure is achieved.
Secondly, we obtain rates of convergence for the homogenization of the Dirichlet problem in Theorem \ref{thm:opt-quant-homo}.
When $d\geq 3$, the convergence rate is $O(R^{-1})$, which is generically optimal.
Afterwards, we quantify the ergodicity of the environmental process for both the continuous- and discrete-time random walks in Theorem~\ref{thm:quant-ergo}, and as a consequence, we get explicit convergence rates for the quenched central limit theorem (QCLT) of the balanced random walk.

\subsection{Settings}
Let $\mb S_{d\times d}$ denote the set of $d\times d$ positive-definite diagonal matrices. A map 
\[
\omega:\Z^d\to\mb S_{d\times d}
\] is called an {\it environment}. We denote the set of all environments by $\Omega$. Let $\mb P$ be a probability measure on $\Omega$ so that 
\[
\left\{\omega(x)=\mathrm{diag}[\omega_1(x),\ldots, \omega_d(x)], x\in\Z^d\right\}
\] 
are i.i.d.\,under $\mb P$. Expectation with respect to $\mb P$ is denoted by $\mb E$ or $E_{\mb P}$. 

\begin{definition}\label{def:differences}
Let $\{e_1,\ldots,e_d\}$ be the  canonical basis for $\R^d$.  Let $U=\{e\in\Z^d:|e|_2=1\}$ be the set of unit vectors.
Define the difference operators $\nabla=(\nabla_e)_{e\in U}$,  and 
$\nabla^2=(\nabla^2_i)_{i=1}^d$ by
\begin{equation}\label{eq:def-nabla}
\nabla_e u(x)=u(x+e)-u(x), \quad
\nabla_i^2u(x)=u(x+e_i)+u(x-e_i)-2u(x).   
\end{equation}
Note that $\nabla, \nabla^2$ are linear operators. 
We also write, for $e,\ell\in U$, 
\[
\nabla_{e,\ell}^2=-\nabla_e\nabla_{\ell}.
\]
\end{definition}

For $r>0$, $y\in\R^d$ we let
\[
\B_r(y)=\left\{x\in\R^d: |x-y|<r\right\}, \quad
B_r(y)=\B_r(y)\cap\Z^d
\]
denote the continuous and discrete balls with center $y$ and radius $r$, respectively.
When $y=0$, we also write
$\B_r=\B_r(0)$ and $B_r=B_r(0)$. 
 For any $B\subset\Z^d$, its {\it discrete boundary} is defined as
\[
\partial B:=\left\{z\in\Z^d\setminus B: \dist(z,x)=1 \text{ for some }x\in B\right\}.
\]
Let $\bar B=B\cup\partial B$. By abuse of notations, whenever confusion does not occur,  we also use $\partial A$ and $\bar A$ to denote the usual continuous boundary and closure of $A\subset\R^d$, respectively.

 For $x\in\Z^d$, a {\it spatial shift} $\theta_x:\Omega\to\Omega$ is defined by 
 \[
 (\theta_x\omega)(\cdot)=\omega(x+\cdot).
 \] 
In a random environment $\omega\in\Omega$, we consider the discrete elliptic Dirichlet problem
\begin{equation}\label{eq:elliptic-dirich}
\left\{
\begin{array}{lr}
\tfrac 12\tr\big(\omega(x)\nabla^2u(x)\big)=\frac{1}{R^2}f\left(\tfrac{x}{R}\right)\zeta(\theta_x\omega) & x\in B_R,\\[5 pt]
u(x)=g\left(\tfrac{x}{|x|}\right) & x\in \partial B_R,
\end{array}
\right.
\end{equation}
where $f\in\R^{\B_1}, g\in\R^{\partial\B_1}$ are functions with good regularity properties and $\zeta\in\R^\Omega$ satisfies suitable integrability conditions (A special case is $\zeta\equiv 1$).  Stochastic homogenization studies  (for $\mb P$-almost all $\omega$)  the convergence of $u$ to the solution $\bar u$ of a deterministic {\it effective equation}
\begin{equation}\label{eq:effective-ellip}
\left\{
\begin{array}{lr}
\tfrac 12\tr(\bar a D^2\bar{u})
=f\bar\psi &\text{ in }\B_1,\\ 
\bar u=g &\text{ on }\partial \B_1,
\end{array}
\right.
\end{equation}
as $R\to\infty$. Here $D^2 \bar{u}$ denotes the Hessian matrix of $\bar{u}$ and $\bar a=\bar a(\mb P)\in\mb S_{d\times d}$ and $\bar\psi=\bar\psi(\mb P,\zeta)\in\R$ are {\it deterministic} and do not depend on the realization of the random environment (see the statement of Proposition \ref{thm:quant-homo} for formulas for $\bar{a}$ and $\bar{\psi}$).

The {\it non-divergence form} difference equation \eqref{eq:elliptic-dirich} is used to describe random walks in a random environment (RWRE) in $\Z^d$. To be specific, 
we set
\begin{equation}\label{def:omegaxei}
\omega(x,x\pm e_i):=\frac{\omega_i(x)}{2\tr\omega(x)} \quad  \text{ for } i=1,\ldots d,
\end{equation}
 and $\omega(x,y)=0$ if $|x-y|\neq 1$. Namely, we normalize $\omega$ to get a transition probability.  We remark that the configuration of $\{\omega(x,y):x,y\in\Z^d\}$ is also called a {\it balanced environment} in the literature \cite{L-82,GZ-12,BD-14}.
\begin{definition}\label{def:rwre-discrete}
For each fixed $\omega\in\Omega$, the random walk  $(X_n)_{n\ge 0}$ in the environment $\omega$ with $X_0=x$ is a Markov chain  in $\Z^d$ with transition probability $P_\omega^x$ specified by
\begin{equation}\label{eq:def-RW}
P_\omega^x\left(X_{n+1}=z|X_n=y\right)=\omega(y,z).
\end{equation}
 \end{definition} 
The expectation with respect to $P_\omega^x$ is written as $E_\omega^x$. When the starting point of the random walk is $0$, we sometimes omit the superscript and simply write $P_\omega^0, E_\omega^0$ as $P_\omega$ and $E_\omega$, respectively.  Notice that for random walks $(X_n)$ in an environment $\omega$, 
\begin{equation}\label{def:omegabar}
\bar\omega^i=\theta_{X_i}\omega\in\Omega, \quad i\ge 0,
\end{equation}
is also a Markov chain, called the {\it environment viewed from the particle} process. By abuse of notation, we enlarge our probability space so that $P_\omega$ still denotes the joint law of the random walks and $(\bar\omega^i)_{i\ge 0}$. 

We also consider the {\it continuous-time} RWRE $(Y_t)$ on $\Z^d$. Set, for $\omega\in\Omega$,
\begin{equation}
\label{eq:def-a}
a(x)=a_\omega(x):=\frac{\omega(x)}{\tr\omega(x)}=\diag[2\omega(x,x+e_1),\ldots, 2\omega(x,x+e_d)], \quad x\in\Z^d,
\end{equation}
and write the $i$-th diagonal entry of $a(x)$ as $a_i(x)=\frac{\omega_i(x)}{\tr\omega(x)}$.
\begin{definition}\label{def:rwre-continuous}
Let $(Y_t)_{t\ge 0}$ be the Markov process on $\Z^d$ with generator \begin{equation}\label{eq:def-of-L}
L_\omega u(x)=\sum_y\omega(x,y)[u(y)-u(x)]=\tfrac{1}{2}\tr(a(x)\nabla^2 u).
\end{equation}
\end{definition}
By abuse of notation, we also denote by $P_\omega^x$  the quenched law of $(Y_t)$. If there is no ambiguity from the context, we also write, for $x,y\in\Z^d$, $n\in\Z, t\in\R$, the transition kernels of the discrete and continuous time walks as
\[
p_n^\omega(x,y)=P_\omega^x(X_n=y), \quad\text{ and }\quad
p_t^\omega(x,y)=P_\omega^x(Y_t=y),
\]
respectively.

\subsection{Main assumptions}
Throughout the paper, the following assumptions are always in force.
\begin{enumerate}
\item[(A1)] $\left\{\omega(x), x\in \Z^d \right\}$ are i.i.d.\,under the probability measure $\mb P$.
\item[(A2)] $\frac{\omega}{\tr\omega}\ge 2\kappa{\rm I}$ for $\mb P$-almost every $\omega$ and some constant 
$\kappa\in(0,\tfrac{1}{2d}]$.
\item[(A3)] $\psi\in L^\infty(\mb P)$ is a bounded  measurable function of $\omega(0)$.
\end{enumerate} 
In the paper, we use $c, C$ to denote positive constants which may change from line to line but only depend on the dimension $d$ and the ellipticity constant $\kappa$ unless otherwise stated. We write $A
\lesssim B$ if $A\le CB$, and $A\asymp B$ if $A\lesssim B$ and $A\gtrsim B$. 
We also use notations $A\lesssim_j B$, $A\asymp_j B$ to indicate that the multiplicative constant depends on the variable $j$ other than $(d,\kappa)$.

\subsection{Earlier results in the literature}

We first recall the following quenched central limit theorem (QCLT) proved by Lawler \cite{L-82}, which is a discrete version of Papanicolaou,  Varadhan \cite{PV-82}.
\begin{thmx}\label{thm:QCLT}
Assume {\rm(A2)} and that law $\mb P$ of the environment is ergodic under spatial shifts $\{\theta_x:x\in\Z^d\}$. Then 
\begin{enumerate}[(i)]
\item There exists a probability measure $\mb Q\approx\mb P$ such that $(\evp{i})_{i\ge 0}$ is an ergodic (with respect to time shifts) sequence under law $\mb Q\times P_\omega$.
\item For $\mb P$-almost every $\omega$, the rescaled path $X_{n^2t}/n$ converges weakly (under law $P_\omega$) to a Brownian motion with covariance matrix 
\begin{equation}
\label{eq:def-abar}
\bar a=\diag[\bar a_1,\ldots,\bar a_d]:=E_\Q[a]=E_\Q[\tfrac{\omega(0)}{\tr\omega(0)}]>0.
\end{equation}
\end{enumerate}
\end{thmx}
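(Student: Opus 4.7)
The plan is to follow the \emph{environment viewed from the particle} approach of Kozlov and Papanicolaou--Varadhan, adapted to the discrete balanced setting as in Lawler. The crucial feature of the balanced assumption is that $\sum_{e\in U}\omega(0,e)e=0$ identically in $\omega$, so $E_\omega^x[X_{n+1}-X_n\mid X_n]=0$ and therefore $M_n:=X_n$ is itself a $P_\omega$-martingale with predictable quadratic variation
\[
\langle M\rangle_n=\sum_{i=0}^{n-1} a(\evp{i}).
\]
No Kipnis--Varadhan corrector is needed, so the task reduces to (a) producing an invariant ergodic measure $\Q\approx\mb P$ for the Markov chain $(\evp{i})$, and (b) applying the martingale invariance principle.

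For step (a), I would construct $\Q$ by an approximation scheme. Let $\rho_n$ denote the (unique) invariant probability density of the environment chain lifted to the periodic torus $\Z^d/n\Z^d$. Uniform ellipticity (A2) together with the Krylov--Safonov--Kuo--Trudinger Harnack inequality for balanced non-divergence difference operators yields uniform two-sided bounds $c\le\rho_n\le C$ on a set of $\mb P$-full measure. Extracting a subsequential limit gives a bounded, bounded-below density $\rho$ so that $\dd\Q:=\rho\,\dd\mb P$ is invariant for the operator $Tf(\omega):=\sum_{e\in U}\omega(0,e)f(\theta_e\omega)$ and equivalent to $\mb P$. Ergodicity of $\Q\times P_\omega$ is then proved by showing that every bounded $T$-invariant function of $\omega$ is $\mb P$-a.s.\ constant; here the i.i.d.\ structure (A1) allows one to reduce to a tail event of $\{\omega(x)\}_{x\in\Z^d}$, and the Harnack inequality (via a coupling of two walks from $0$) shows this event has probability $0$ or $1$.

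With (a) in hand, step (b) is standard. The ergodic theorem applied to the stationary sequence $(a(\evp{i}))$ under $\Q\times P_\omega$ yields $\tfrac{1}{n}\langle M\rangle_n\to E_\Q[a]=\bar a$ for $\Q$-a.e.\ $\omega$, hence for $\mb P$-a.e.\ $\omega$ by the equivalence $\Q\approx\mb P$. The Lindeberg condition is immediate from the uniform bound $|X_{n+1}-X_n|\le 1$. The martingale functional CLT then gives weak convergence of $(X_{n^2 t}/n)_{t\ge 0}$ under $P_\omega$ to Brownian motion with covariance $\bar a$, for $\mb P$-a.e.\ $\omega$. Positive definiteness of $\bar a$ is inherited from (A2) and the lower bound on $\rho$.

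The main obstacle is the construction in step (a) of an invariant density with strictly positive lower bound and finite upper bound. This rests decisively on the Harnack / mean value inequality for balanced non-divergence difference operators, which is delicate (especially in $d=2$) and is precisely where uniform ellipticity and the hypothesis $d\ge 2$ enter. Once $\rho$ is in hand and ergodicity is established, the remainder of the proof is routine martingale theory.
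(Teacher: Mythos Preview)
The paper does not prove Theorem~\ref{thm:QCLT}; it is quoted from Lawler \cite{L-82} (and Papanicolaou--Varadhan \cite{PV-82}) as background. Your outline follows Lawler's strategy faithfully in its architecture: periodize, find the finite-volume invariant density, pass to a limit, then invoke the martingale functional CLT using the balanced structure so that $X_n$ is already a martingale with bracket $\sum_{i<n}a(\evp{i})$. That part is right.

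There is, however, a genuine error in step (a). You assert that the Harnack inequality yields uniform two-sided bounds $c\le\rho_n\le C$. This is false, and the present paper says so explicitly: ``$\rho_\omega$ does not have deterministic (nonzero) upper and lower bounds.'' What Lawler actually obtains is a uniform (in $n$) $L^p(\mb P)$ bound on $\rho_n$ for some $p>1$, coming from the discrete ABP maximum principle applied to the adjoint equation; this gives enough compactness to extract a weak-$L^p$ limit $\rho$ with $E_{\mb P}[\rho]=1$. Equivalence $\Q\approx\mb P$ is then argued indirectly (the set $\{\rho=0\}$ is shift-invariant and has $\mb P$-measure $<1$, hence $0$), not via a pointwise lower bound. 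Since your claimed bounds are the ``decisive'' input you identify, this is a real gap, not a cosmetic one.

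A smaller point: the theorem assumes only ergodicity of $\mb P$ under spatial shifts, not (A1). Your ergodicity argument for $(\evp{i})$ invokes the i.i.d.\ structure to reduce to a tail event; this is unnecessary and does not match the hypotheses. The correct argument is that any bounded $T$-invariant function is (via uniform ellipticity) invariant under every $\theta_{e}$, $e\in U$, hence $\mb P$-a.s.\ constant by the assumed ergodicity of $\mb P$.
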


QCLT for the balanced RWRE in static environments under weaker ellipticity assumptions can be found at \cite{GZ-12, BD-14}. For dynamic balanced random environment, QCLT was established in  \cite{DGR-15} and finer results concerning the local limit theorem and heat kernel estimates was obtained at \cite{DG-19}.  When the RWRE is allowed to make long jumps,  non-CLT stable limits of the balanced random walk is considered in \cite{CCKW-21-1,CCKW-21-2}.
 We refer to the lecture notes \cite{BoSz-02, OZ-04, MB-11, DreRam-14, Kumagai-14} for QCLT results in different models of RWRE.

We are moreover interested in characterizing the invariant measure $\Q$. Denote the Radon-Nikodym derivative of $\Q$ with respect to $\mb P$ as
\begin{equation}\label{eq:def-rho}
\rho(\omega)=\dd\mb Q/\dd\mb P.
\end{equation}
For any $x\in\Z^d$ and finite set $A\subset\Z^d$, we define
\[
\rho_\omega(x):=\rho(\theta_x\omega)
\quad \text{ and }
\quad
\rho_\omega(A)=\sum_{x\in A}\rho_\omega(x).
\]

As an important feature of the non-divergence form model,  $\rho_\omega$ does not have deterministic (nonzero) upper and lower bounds. Moreover, the heat kernel $p_t^\omega(\cdot,\cdot)$ is not expected to have deterministic Gaussian bounds.

For $r\ge 0, t>0$, define a function
\begin{equation}\label{eq:def-mf-h}
\mf h(r,t)=\frac{r^2}{r\vee t}+r\log(\frac{r}{t}\vee 1), \quad
r\ge 0, t>0.
\end{equation} 
The following result was obtained by Guo, Tran \cite{GT-22}.
\begin{thmx}\label{thm:hk-bounds}
Assume {\rm(A1), (A2)}, and $d\ge 2$.
Let $s=s(d,\kappa)=2+\tfrac{1}{2\kappa}-d\ge 2$.
For any $\error\in(0,1)$, there exists a random variable $\tx(\omega)=\tx(\omega,d,\kappa,\error)>0$ with 
$\mb E[\exp(c\tx^{d-\error})]<\infty$ such that the following properties hold.
\begin{enumerate}[(a)]
\item\label{item:rho-bounds} For $\mb P$-almost all $\omega$,
\[
c\tx^{-s}\le \rho(\omega)\le C\tx^{d-1}.
\]
\item\label{item:rho-hke-expmmt}
Recall the function $\mf h$ in \eqref{eq:def-mf-h}. 
For any $r\ge 1$ and $\mb P$-almost all $\omega$,
\[
c\tx^{-s}\le
\frac{r^d\rho_\omega(0)}{\rho_\omega(B_r)}
\le
C\tx^{d-1}.
\]
\item\label{item:hke-expmmt}
For any $x\in\Z^d, t>0$, and $\mb P$-almost all $\omega$,
\begin{align*}
p_t^\omega(x,0)
&\le  
C\tx^{d-1}(1+t)^{-d/2}e^{-c\mf h(|x|,t)},\\
p_t^\omega(x,0)&\ge
c\tx^{-s}(1+t)^{-d/2}e^{-C|x|^2/t}.
\end{align*}	
\end{enumerate}
\end{thmx}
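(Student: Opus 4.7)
The plan is to prove the three parts in an interlocking way. The crucial structural fact is that in the balanced model $\rho_\omega$ is a positive discrete solution of the adjoint equation $L_\omega^{\ast}\rho_\omega = 0$, while $L_\omega$ itself is in non-divergence form; consequently, quantitative information about $\rho_\omega$ feeds into heat-kernel estimates through the classical duality between the Markov chain and its time reversal. I would first construct $\mb Q \approx \mb P$ and the density $\rho$ via the standard fixed-point argument for the environmental process, normalized so that $\E[\rho] = 1$.

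For part \eqref{item:hke-expmmt}, I would obtain the on-diagonal upper bound $p_t^\omega(0,0) \lesssim \tx^{d-1}\, t^{-d/2}$ by a Nash-type iteration against the invariant measure $\rho_\omega$; the random factor $\tx^{d-1}$ enters because the local discrete Sobolev / Faber--Krahn constants in the iteration are controlled by $\rho_\omega$ rather than by (A2) alone. A Davies-type perturbation then produces the off-diagonal profile encoded by $\mf h(|x|,t)$, whose two regimes reflect Gaussian tails when $|x|^2\lesssim t$ and the combinatorial cost of many independent unit jumps otherwise. The lower bound reduces by a standard chaining argument to an on-diagonal lower bound, which follows from the parabolic discrete Alexandrov--Bakelman--Pucci / weak Harnack inequality in the Kuo--Trudinger and Lawler tradition; the prefactor $\tx^{-s}$ tracks the non-deterministic part of the ABP constant.

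For part \eqref{item:rho-bounds}, the upper bound $\rho(\omega) \le C\tx^{d-1}$ comes from a discrete mean-value inequality applied to the $L_\omega^{\ast}$-harmonic function $\rho_\omega$ on a ball $B_r$, combined with the identity $\E[\rho_\omega(B_r)] = |B_r|$; the exponent $d-1$ measures how much the local supremum can exceed the local average. The lower bound $\rho(\omega) \ge c\tx^{-s}$ with $s = 2 + \tfrac{1}{2\kappa} - d$ is more delicate: it relies on the sharp discrete ABP inequality, where the exponent $s$ is dual to the summability index tied to the ellipticity $2\kappa$, below which the $L^p$-ABP control degrades. Part \eqref{item:rho-hke-expmmt} is then the ball-averaged version, which follows from \eqref{item:rho-bounds} applied at the origin together with the analogous lower bound on $\rho_\omega(B_r)/|B_r|$ obtained by the same ABP argument together with stationarity.

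The main obstacle is establishing the stretched exponential integrability $\E[\exp(c\,\tx^{d-\error})] < \infty$. I would define $\tx(\omega)$ as a maximal function over dyadic scales of a quantitative defect measuring the failure of the mean-value and ABP constants to be deterministic in a neighborhood of the origin. Using the i.i.d.\ structure of the environment, concentration at a fixed scale $r$ can be obtained by partitioning $B_r$ into roughly independent sub-blocks of side $\log r$ and applying a Bernstein-type entropy bound, yielding tail probabilities of the form $\exp(-c r^d)$. A union bound over $O(R^d)$ centers and $O(\log R)$ dyadic scales in a large box $B_R$ costs exactly the $\error$ loss in the exponent, and the stretched exponential moment then follows by integrating these tail estimates.
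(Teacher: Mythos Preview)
This statement is Theorem~B in the paper, and it is \emph{not} proved here: it appears in the section ``Earlier results in the literature'' and is explicitly cited from the authors' prior work \cite{GT-22}. There is therefore no proof in the present paper to compare your proposal against.

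As to the plausibility of your sketch, the broad architecture is reasonable and indeed close in spirit to what is known in this area: heat-kernel bounds of the shape in part~(c) in the ergodic balanced setting are obtained in \cite{DG-19} via parabolic Harnack / H\"older estimates (see \eqref{eq:ergodic-hke}), with the random prefactors expressed through $\rho_\omega(B_{\sqrt t})$; the passage from those deterministic-in-$\rho$ bounds to the stochastic integrability statement $\mb E[\exp(c\tx^{d-\error})]<\infty$ is precisely the content of \cite{GT-22}. Your description of that last step --- define $\tx$ as a maximal function of a scale-by-scale defect, use the i.i.d.\ structure for concentration at each scale, then take a union bound over scales to absorb the $\error$ loss --- is the right shape of argument, though the actual implementation in \cite{GT-22} is more delicate than a direct Bernstein bound on sub-blocks: the defect one needs to control is the ratio $\rho_\omega(B_r)/|B_r|$, and $\rho$ is neither local nor a sum of independent pieces, so the concentration step requires the machinery developed in that paper rather than a direct partition into independent blocks. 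If you intend to reconstruct the proof, that is the place where the sketch would need substantial work.
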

\begin{remark}
In the PDE setting,  positive and negative algebraic moment bounds  and volume doubling property of $\rho$ were proved by Bauman \cite{Baum84}.  
The $L^p$ integrability of the heat kernel moment was proved by Fabes and Stroock \cite{FS84}. 
Deterministic heat kernel bounds in terms of $\rho$ was shown by Escauriaza \cite{Esc00} in the PDE setting, and by Mustapha \cite{Mustapha-06} for discrete time balanced random walks.  
In the more general dynamic ergodic balanced environment setting,  the bounds 
\begin{equation}\label{eq:ergodic-hke}
\frac{c\rho_\omega(0)}{\rho_\omega(B_{\sqrt t})}e^{-C|x|^2/t}
\le 
p_t^\omega(x,0)
\le \frac{C\rho_\omega(0)}{\rho_\omega(B_{\sqrt t})}e^{-c\mf h(|x|,t)}
\end{equation}
were proved by Deuschel, Guo \cite[Theorem 11]{DG-19}.  Recently, Armstrong, Fehrman, Lin \cite{AFL-22} obtain an algebraic rate of convergence for the heat kernels.
\end{remark}

 We now state a quantitative homogenization result  in Guo, Peterson, Tran \cite[Theorem 1.5]{GPT-19}, which can be considered as a discrete version of Armstrong, Smart \cite[Theorem 1.2]{AS-14}.
\begin{propx}
\label{thm:quant-homo}
Assume {\rm(A1), (A2)}.
Recall the measure $\Q$ in Theorem~\ref{thm:QCLT}. 
Suppose $g\in C^\alpha(\partial\B_1)$, $f\in C^{\alpha}(\B_1)$ for some $\alpha\in(0,1]$, and $\zeta$ is a measurable function of $\omega(0)$ with $\bar\psi:=\norm{\zeta/\tr\omega(0)}_\infty <\infty$.
Let $\bar u$ be the solution of the Dirichlet problem \eqref{eq:effective-ellip} with $\bar a=E_{\Q}[\omega(0)/\tr\omega(0)]>0$ and $\bar\psi$ as above.

For any $\error\in(0,1)$,  let $\tx=\tx(\omega,d,\kappa,\error)$ be the same as in Theorem~\ref{thm:hk-bounds}. Then,
there exists a constant $\beta=\beta(d,\kappa,\error)\in(0,1)$ such that for any $R>0$, the solution $u$ of \eqref{eq:elliptic-dirich} satisfies
\[
\max_{x\in B_R}\Abs{u(x)-\bar u(\tfrac{x}{R})}
\lesssim 
A\big(1+\ms (\tfrac{\tx}{R})^{1-\error/d}\big)R^{-\alpha\beta},
\]
where  $A=\norm{f}_{C^{0,\alpha}(\B_1)}\norm{\tfrac{\zeta}{\tr\omega(0)}}_\infty+[g]_{C^{0,\alpha}(\partial\B_1)}$.
\end{propx}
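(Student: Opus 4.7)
The plan is to adapt the strategy of Armstrong--Smart for continuous non-divergence form equations to the discrete balanced setting. The cornerstone is a quantitative homogenization statement for quadratic boundary data: given a symmetric matrix $M$, let $v_M$ solve the homogeneous equation $\tfrac12\tr(\omega(x)\nabla^2 v_M)=0$ in $B_R$ with $v_M(x)=\tfrac12\langle Mx,x\rangle$ on $\partial B_R$. One introduces a subadditive (or monotone) quantity $\Phi(R,M)$ measuring the deviation of $v_M(x)-\tfrac12\langle Mx,x\rangle$ from its effective value, essentially $R^{-2}\max_{B_R}|v_M(x)-\tfrac12\langle\bar MX,X\rangle|$ with $\bar M$ chosen so that $\tr(\bar a\bar M)=\tr(M)$ appropriately. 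By comparing $\Phi$ on cubes of size $R$ with those on cubes of size $R/K$ and exploiting the i.i.d.\ structure via a concentration inequality, one shows that $\mathbb E[\Phi(R,M)]$ decays like $R^{-\beta}$ for some $\beta=\beta(d,\kappa,\varepsilon)>0$. The discrete Krylov--Safonov and ABP estimates (which in the balanced non-divergence setting depend on $\rho_\omega$) are the main analytic input.

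The next step is to pass from the algebraic rate for quadratic data to the rate for $C^\alpha$ data appearing in \eqref{eq:elliptic-dirich}. By interior $C^{2,\alpha}$ regularity for the effective equation \eqref{eq:effective-ellip}, the solution $\bar u$ can be locally approximated by quadratic polynomials with error $O(r^{2+\alpha})$ on balls of radius $r$. Covering $B_R$ by balls of a suitable mesoscopic scale $r\ll R$, solving an auxiliary Dirichlet problem with quadratic boundary data on each and applying the quadratic-data rate, a maximum-principle gluing argument yields convergence at rate $R^{-\alpha\beta}$ after optimizing $r$. The inhomogeneous term $\tfrac{1}{R^2}f(x/R)\zeta(\theta_x\omega)$ is handled by absorbing $\zeta/\tr\omega(0)$ into the right-hand side and using the fact that $\bar\psi=E_{\Q}[\zeta/\tr\omega(0)]$; the large-scale average of this quantity converges to $\bar\psi$ with an analogous rate, which can be derived from the same subadditive iteration applied to the associated Poisson problem.

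The boundary layer near $\partial B_R$ is treated via standard barrier arguments combined with the boundary $C^\alpha$ regularity of $\bar u$ (coming from $g\in C^\alpha(\partial\B_1)$), which localizes the error near the boundary and does not degrade the interior rate. Putting everything together, one obtains a bound of the form $A\cdot R^{-\alpha\beta}$ up to multiplicative factors arising from the deterministic regularity constants.

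The main obstacle is that the discrete Krylov--Safonov, Harnack and ABP-type estimates in the balanced non-divergence setting are not uniform in $\omega$ but depend on quantities like $\rho_\omega(B_r)/(r^d\rho_\omega(0))$. By part \eqref{item:rho-hke-expmmt} of Theorem~\ref{thm:hk-bounds}, these are controlled by powers of the random variable $\mathscr{H}(\omega)$, and this is precisely the mechanism that forces the prefactor $1+(\mathscr{H}/R)^{1-\varepsilon/d}$ in the final estimate. Tracking this random dependence carefully through the multiscale iteration, so that the exponent $1-\varepsilon/d$ is compatible with the exponential moment bound $\mathbb E[\exp(c\mathscr{H}^{d-\varepsilon})]<\infty$, is the most delicate bookkeeping step; everything else is a relatively routine adaptation of the Armstrong--Smart machinery to $\Z^d$.
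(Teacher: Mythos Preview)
The paper does not prove this proposition; it is quoted from \cite{GPT-19} as a known input, described there as a discrete version of Armstrong--Smart \cite{AS-14}. Your overall architecture---subadditive quantity for quadratic data, concentration via the i.i.d.\ structure, then passage to $C^\alpha$ data by local quadratic approximation and gluing---is indeed the Armstrong--Smart scheme and matches what \cite{GPT-19} does.

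There is, however, a genuine error in your identification of the source of the random prefactor $(\mathscr{H}/R)^{1-\varepsilon/d}$. Under assumption (A2), the discrete Krylov--Safonov, Harnack and ABP estimates are \emph{deterministic}: the constants depend only on $(d,\kappa)$, not on $\omega$ or on $\rho_\omega$. See for instance the H\"older estimate \eqref{eq:osc} or the references to \cite{KT-96}; the $\rho_\omega$-dependent versions you allude to arise only in the non-elliptic genuinely $d$-dimensional setting of \cite{BCDG-18}, which is not the regime here. The random scale $\mathscr{H}$ enters instead through the \emph{concentration step}: one needs $R$ large enough that the fluctuations of the subadditive quantity around its expectation are small, and a union bound over mesoscopic subcubes produces a random minimal scale with stretched-exponential moments. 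Below that scale the iteration yields nothing, which is why the bound carries the additive $1$ and the factor $(\mathscr{H}/R)^{1-\varepsilon/d}$. If you attempt the bookkeeping with random constants in every ABP/Harnack application, the losses will not close up to the stated exponent.
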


When the balanced environment is allowed to be non-elliptic and genuinely $d$-dimensional, 
(weak) quantitative results and Harnack inequalities for non-divergence form difference operators were obtained by Berger, Cohen, Deuschel, Guo  \cite{BCDG-18}, and Berger, Criens \cite{BC-22} for  $\omega$-harmonic and $\omega$-caloric functions, respectively.

Let us also give a brief overview of the quantitative homogenization of non-divergence form operators in the continuous PDE setting. Yurinski derived a second moment estimate of the homogenization error in \cite{Yur-88} for linear elliptic case. 
Caffarelli, Souganidis \cite{CaSu10} proved a logarithmic convergence rate for the fully nonlinear case. 
Afterwards, Armstrong, Smart \cite{AS-14}, and Lin, Smart \cite{LS-15} achieved an algebraic convergence rate for fully nonlinear elliptic equations, and fully nonlinear parabolic equations, respectively.
Armstrong, Lin \cite{AL-17} obtained quantitative estimates for the approximate corrector problems.

For $d\ge 2$ and any finite set $A\subset\Z^d$,
denote  the exit time from $A$  by  
\begin{equation}\label{def:tau}
\tau(A)=\tau(A;X)=\inf\{n\ge 0:X_n\notin A\}.
\end{equation}

\begin{definition}
\label{def:green}
For $R\ge 1$, $\omega\in\Omega$, $x\in \Z^d$, $S\subset\Z^d$,  the {\it Green function} $G_R(\cdot,\cdot)$ in the ball $B_R$ for the balanced random walk is defined by
\[
G_R(x, S)=G_R^\omega(x,S):=
E_\omega^x\big[\int_0^{\tau(B_R)}\mathbbm{1}_{Y_t\in S}\dd t\big],
 \quad x\in\bar B_R.
\]
We also write $G_R(x,y):=G_R^\omega(x,\{y\})$ and $G_R(x):=G_R(x,0)$. 
When $d\ge 3$,  for any finite set $S\subset\Z^d$, the Green function on the whole space can be defined as
\[
G^\omega(x, S)=\int_0^\infty p_t^\omega(x,S)\dd t<\infty.
\]
When $d=2$,  for any $x,y\subset\Z^d$,  the {\it potential kernel} is defined as
\begin{equation}
\label{eq:def-potential}
A(x,y)=A^\omega(x,y)=\int_{0}^\infty [p^\omega_t(y,y)-p^\omega_t(x,y)]\dd t,
\quad x\in\Z^2.
\end{equation}
\end{definition}

The bounds for the Green functions and the potential kernel were proved in Guo, Tran \cite{GT-22}, which was based on the idea of Armstrong, Lin \cite[Proposition 4.1]{AL-17}.

\begin{thmx}\label{thm:Green-bound}
Assume {\rm(A1), (A2)}.  For $\error>0$, let $s>0$, $\tx=\tx(\omega,d,\kappa,\error)>0$ be as in Theorem~\ref{thm:hk-bounds}. 
For $r> 0$, let
\begin{equation}\label{eq:def_ur}
U(r):=\left\{
\begin{array}{lr}
-\log r & \quad d=2,\\
r^{2-d} &\quad d\ge 3.
\end{array}
\right.
\end{equation}
Then $\mb P$-almost surely, for all $x\in B_R$,
\[
\tx^{-s}[U(|x|+1)-U(R+2)]
\lesssim
G_R^\omega(x,0)\lesssim 
\tx^{d-1}[U(|x|+1)-U(R+2)].
\]
As consequences,
$\mb P$-almost surely,  for all $x\in\Z^d$, 
\begin{align*}
\tx^{-s}\log(|x|+1)\lesssim
A^\omega(x,0)
\lesssim  \tx\log(|x|+1), &\text{ when }d=2;\\
\tx^{-s}(1+|x|)^{2-d}\lesssim
G^\omega(x,0)
\lesssim
\tx^{d-1}(1+|x|)^{2-d}, &\text{ when }d\ge 3.
\end{align*}
\end{thmx}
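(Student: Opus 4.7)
The plan is to integrate the heat-kernel bounds of Theorem~\ref{thm:hk-bounds} in time, and then stitch the resulting estimates together with hitting-probability estimates near the boundary. This is the discrete analogue of the strategy used by Armstrong--Lin in the continuous PDE setting.

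\emph{Step 1 (Whole-space consequences).} For $d\ge 3$, start from $G^\omega(x,0)=\int_0^\infty p_t^\omega(x,0)\,\dd t$ and plug in the two-sided heat-kernel bounds. Splitting the time integral at $t\asymp|x|^2$, and observing that $\mf{h}(r,t)\asymp r^2/t$ for $t\ge r$ while the contribution from $t\le r$ is exponentially suppressed, both sides evaluate to $\asymp (1+|x|)^{2-d}$, giving $c\tx^{-s}(1+|x|)^{2-d}\le G^\omega(x,0)\le C\tx^{d-1}(1+|x|)^{2-d}$. For $d=2$, handle $A^\omega(x,0)=\int_0^\infty[p_t^\omega(0,0)-p_t^\omega(x,0)]\,\dd t$ by splitting at $t=|x|^2$: on $[1,|x|^2]$ the term $p_t^\omega(x,0)$ is Gaussian-suppressed and $\int_1^{|x|^2}p_t^\omega(0,0)\,\dd t$ is comparable to $\tx^{\pm}\log|x|$ (with exponent $-s$ or $d-1$ for the lower or upper bound), while on $[|x|^2,\infty)$ the H\"older regularity of $L_\omega$-caloric functions combined with the upper heat-kernel bound yields $|p_t^\omega(0,0)-p_t^\omega(x,0)|\lesssim \tx^{d-1}t^{-1}(|x|/\sqrt{t})^{\alpha}$, whose integral contributes a lower-order term.

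\emph{Step 2 (Green function in the ball).} Decompose $B_R=B_{R/2}\cup(B_R\setminus B_{R/2})$. In the interior region $U(|x|+1)-U(R+2)\asymp U(|x|+1)$, so the upper bound follows from $G_R^\omega(x,0)\le G^\omega(x,0)$ (truncated at time $cR^2$ for $d=2$) together with Step~1. For the interior lower bound, use the strong Markov identity $G_R^\omega(x,0)=q_R(x)\cdot G_R^\omega(0,0)$ with $q_R(x):=P_\omega^x(\sigma_0<\tau(B_R))$: the denominator is controlled from below by integrating the pointwise heat-kernel lower bound at the origin over $t\in[1,cR^2]$, and $q_R(x)$ from below by comparison with the discrete-Laplacian Green function via a sub-solution argument. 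Near the boundary, $U(|x|+1)-U(R+2)\asymp (R-|x|)/R\cdot U(R)$, and the desired bound reduces to the discrete half-space exit-probability estimate $q_R(x)\asymp\tx^{\pm}(R-|x|)/R$, which is obtained by comparing $q_R$ with affine barriers and using the heat-kernel bounds to control hitting times.

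\emph{Main obstacle.} The principal difficulty lies in the boundary correction $-U(R+2)$: a direct heat-kernel integration only produces the interior scale $\tx^{\pm}U(|x|+1)$ and does not see the subtraction. Capturing it requires coupling the random heat-kernel bounds with PDE tools (maximum principle with radial or affine barriers, Harnack inequality for balanced non-divergence operators), together with careful book-keeping so that the $\tx^{d-1}$ and $\tx^{-s}$ multiplicative factors remain cleanly separated on the two sides of the sandwich when the interior and boundary regimes are joined.
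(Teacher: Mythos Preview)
This theorem is not actually proved in the paper: it is quoted as an earlier result from \cite{GT-22} (following the idea of \cite[Proposition~4.1]{AL-17}), so there is no in-paper argument to compare against. Your overall mechanism---integrating the two-sided heat-kernel bounds of Theorem~\ref{thm:hk-bounds}\eqref{item:hke-expmmt} in time and handling the boundary via barrier/hitting arguments---is the right one and is essentially what those references do. Note, though, that you invert the logical order: the statement derives the whole-space Green function and the potential kernel as $R\to\infty$ consequences of the ball estimate, whereas you go the other way; this is harmless for $d\ge3$ but makes the $d=2$ bookkeeping messier.

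Step~2 contains a concrete slip. You define $q_R(x)=P_\omega^x(\sigma_0<\tau(B_R))$ and then assert the near-boundary estimate $q_R(x)\asymp\tx^\pm(R-|x|)/R$. This is off by a dimension-dependent factor: since $G_R^\omega(x,0)=q_R(x)\,G_R^\omega(0,0)$ with $G_R^\omega(0,0)\asymp\tx^\pm$ for $d\ge3$ (respectively $\asymp\tx^\pm\log R$ for $d=2$), matching $G_R^\omega(x,0)\asymp\tx^\pm[U(|x|+1)-U(R+2)]$ near the boundary forces $q_R(x)\asymp\tx^\pm R^{1-d}(R-|x|)$ for $d\ge3$ and $q_R(x)\asymp\tx^\pm(R-|x|)/(R\log R)$ for $d=2$. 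What \emph{is} of size $(R-|x|)/R$ is the probability of reaching $B_{R/2}$ before $\partial B_R$, and the affine-barrier argument you invoke (legitimate because linear functions are $L_\omega$-harmonic in the balanced setting) proves exactly that; you then need a second strong-Markov step combining it with the already-established interior value of $G_R(\cdot,0)$ on $\partial B_{R/2}$. Relatedly, ``comparison with the discrete-Laplacian Green function via a sub-solution argument'' does not work as stated, since a sub/super-solution for the constant-coefficient Laplacian is not one for the variable-coefficient $L_\omega$; the working substitutes are the Harnack inequality for $L_\omega$-harmonic functions (to transfer estimates around spheres) or a direct comparison of the killed and free heat kernels for $t\le cR^2$ using exit-time tail bounds.
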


Recall the continuous time RWRE $(Y_t)_{t\ge 0}$ in Definition~\ref{def:rwre-continuous}. 
Define the semi-group $P_t$, $t\ge 0$, on $\R^\Omega$ by
\begin{equation}\label{eq:def-semigroup}
P_t \zeta(\omega)=E_\omega^0[\zeta(\theta_{Y_t}\omega)]=\sum_z p_t^\omega(0,z)\zeta(\theta_z\omega).
\end{equation}
 The following theorem from Guo, Tran \cite{GT-22} estimates the optimal speed of decorrelation of the environmental process $\evp{t}$ from the original environment.
\begin{thmx}
\label{thm:var_decay}
Assume {\rm(A1), (A2)}, and $d\ge 3$. For $t\ge 0$ and any measurable function $\zeta\in\R^\Omega$ of $\omega(0)$ with $\norm{\zeta}_\infty\le 1$, we have
\begin{align}
&\var_{\Q}(P_t \zeta)\le C(1+t)^{-d/2};\label{eq:var_decay_q}\\
&\norm{P_t\zeta}_1+\norm{P_t\zeta-\mb E[P_t\zeta]}_p\le C_p(1+t)^{-d/4}
\quad \text{ for all }p\in(0,2). \label{eq:var_decay_p}
 \end{align}	 
\end{thmx}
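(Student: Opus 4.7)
My plan is to prove the $L^2(\Q)$ variance bound \eqref{eq:var_decay_q} first, and then derive the $L^p(\mb P)$ bounds in \eqref{eq:var_decay_p} from it by a H\"older / change-of-measure argument using the moments of $\rho$ furnished by Theorem~\ref{thm:hk-bounds}. Throughout I assume $E_\Q[\zeta]=0$, which is without loss of generality since replacing $\zeta$ by $\zeta-E_\Q[\zeta]$ only affects bounded constants.

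For \eqref{eq:var_decay_q}, the key observation is that $\zeta(\theta_z\omega)$ depends only on $\omega(z)$ and the family $\{\omega(z)\}_{z\in\Z^d}$ is i.i.d.\ under $\mb P$, so the decay of $\var_\Q(P_t\zeta)$ is governed by how well-spread the heat kernel $p_t^\omega(0,\cdot)$ is on the i.i.d.\ field. Expanding
\[
E_\Q[(P_t\zeta)^2]
=E_\Q\!\left[\sum_{z,z'}p_t^\omega(0,z)\,p_t^\omega(0,z')\,\zeta(\theta_z\omega)\zeta(\theta_{z'}\omega)\right]
\]
(equivalently, the two-walk representation $E_{\Q\times P_\omega^{\otimes 2}}[\zeta(\theta_{Y_t^1}\omega)\zeta(\theta_{Y_t^2}\omega)]$), the diagonal contribution $z=z'$ is bounded by $\|\zeta\|_\infty^2 E_\Q[\sum_z p_t^\omega(0,z)^2]$; the on-diagonal heat-kernel bound of Theorem~\ref{thm:hk-bounds}(c), combined with Gaussian summation of the $\mf h$-tails, yields $\sum_z p_t^\omega(0,z)^2 \lesssim \tx^{2(d-1)}(1+t)^{-d/2}$, and $\mb E[\tx^{2(d-1)}]<\infty$ gives the target rate for this piece. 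The off-diagonal part $z\ne z'$, where the $\mb P$-independence of $\zeta(\omega(z))$ and $\zeta(\omega(z'))$ is contaminated by the joint dependence of the heat kernel on the sites $z,z'$, is the main obstacle; because the environmental process is not $\Q$-reversible, a direct Dirichlet-form / Nash-inequality argument is unavailable. My approach is to symmetrize the environmental generator \`a la Kipnis--Varadhan and couple this with a Duhamel-type discrete-derivative formula for the heat-kernel sensitivity $p_t^\omega-p_t^{\omega^{(x)}}$ to a single-site resampling, controlled through the Green-function estimates of Theorem~\ref{thm:Green-bound}; the assumption $d\ge 3$ enters precisely here, so that $G^\omega(x,0)\lesssim\tx^{d-1}(1+|x|)^{2-d}$ furnishes the integrability that reproduces the $t^{-d/2}$ rate.

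Given \eqref{eq:var_decay_q}, the bound \eqref{eq:var_decay_p} follows by H\"older. Setting $g:=P_t\zeta-E_\Q[\zeta]$, so $\|g\|_\infty\le 2$ and $\|g\|_{L^2(\Q)}^2=\var_\Q(P_t\zeta)$, for $p\in(0,2)$ I apply H\"older under $\Q$ with exponents $(2/p,\,2/(2-p))$:
\[
E_{\mb P}[|g|^p]
=E_\Q[|g|^p/\rho]
\le \|g\|_{L^2(\Q)}^p\cdot\bigl(\mb E[\rho^{-p/(2-p)}]\bigr)^{(2-p)/2}.
\]
Since $\rho^{-1}\le C\tx^s$ and $\tx$ has stretched-exponential tails by Theorem~\ref{thm:hk-bounds}(a), the second factor is finite for every $p<2$, giving $\|g\|_{L^p(\mb P)}\le C_p(1+t)^{-d/4}$. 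Combined with $|\mb E[P_t\zeta]-E_\Q[\zeta]|\le\|g\|_{L^1(\mb P)}\lesssim(1+t)^{-d/4}$ and the triangle (or $p$-subadditivity) inequality, this yields $\|P_t\zeta-\mb E[P_t\zeta]\|_{L^p(\mb P)}\lesssim_p(1+t)^{-d/4}$. The $L^1(\mb P)$ bound on $P_t\zeta$ follows from Cauchy--Schwarz under $\Q$, $E_{\mb P}[|P_t\zeta|]=E_\Q[|P_t\zeta|/\rho]\le\|P_t\zeta\|_{L^2(\Q)}\|1/\rho\|_{L^2(\Q)}$, using $E_\Q[\zeta]=0$.

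The hardest step is clearly \eqref{eq:var_decay_q}, specifically the off-diagonal decoupling that certifies that the crude diagonal rate is actually attained despite the joint dependence of the heat kernel on the single-site environments; Theorems~\ref{thm:hk-bounds} and~\ref{thm:Green-bound} supply essentially all the quantitative inputs (on-diagonal heat-kernel decay, Green-function bounds, and the $\mb P$-moments of $\tx$), and the transfer to $\mb P$-norms in \eqref{eq:var_decay_p} is then routine bookkeeping via moments of $\rho^{\pm 1}$.
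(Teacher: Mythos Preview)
This statement is not proved in the paper; it is quoted from \cite{GT-22} in the literature-review subsection, so there is no in-paper proof to compare against.

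Regarding the content of your proposal: your derivation of \eqref{eq:var_decay_p} from \eqref{eq:var_decay_q} via the change of measure $\dd\mb P=\rho^{-1}\dd\Q$, H\"older's inequality, and the negative-moment bound $\rho^{-1}\lesssim\tx^{s}$ from Theorem~\ref{thm:hk-bounds}\eqref{item:rho-bounds} is correct. One small point: the bound $\norm{P_t\zeta}_{1}\lesssim(1+t)^{-d/4}$ only makes sense under the centering $E_\Q[\zeta]=0$ that you impose at the outset; without it $P_t\zeta\to E_\Q[\zeta]\ne 0$ and no decay is possible, so this normalization is not ``without loss of generality'' for that particular term.

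The genuine gap is in \eqref{eq:var_decay_q}. Your diagonal estimate is fine, but for the off-diagonal terms you offer only gestures rather than an argument. You invoke a Kipnis--Varadhan symmetrization while simultaneously (and correctly) noting that the environmental process is not $\Q$-reversible; the Kipnis--Varadhan machinery relies on reversibility (or at least a sector condition), so as written this step has no content. The Duhamel/vertical-derivative idea for the heat-kernel sensitivity is in the right direction --- indeed the present paper's own toolkit (formula \eqref{eq:vdlaplace}, Proposition~\ref{prop:sensitivity-rho}, and the Green-function bounds of Theorem~\ref{thm:Green-bound}) is of exactly this type --- but you have not specified which sensitivity you bound, nor carried out the double summation over $z\ne z'$ and resampled sites to verify that the off-diagonal contribution is $O((1+t)^{-d/2})$ rather than something larger. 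Until that computation is written down, \eqref{eq:var_decay_q} remains unproved.
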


\subsection{Main results}
The field $\{\rho_\omega(x): x\in\Z^d\}$ of the invariant measure,  which governs the long term behavior of the diffusion and which determines the effective PDE,  plays a central role in the  theory of homogenization of  non-divergence form equations.

We first obtain a rate of convergence of the average $\rho_\omega(B_R)/|B_R|$ of the invariant measure to $1$ as $R \to \infty$. 

\begin{theorem}
\label{thm:ave-rho-quant}
Assume {\rm(A1), (A2)}.
For any $d\ge 2, p\in(0,0.5)$, $t>0$ and $R\ge 2$,
\[
P\left(
\Abs{\tfrac{\rho_\omega(B_R)}{|B_R|}-1}
\ge 
tR^{-d/2}\log R
\right)
\le 
C_p\exp(-ct^p).
\]
\end{theorem}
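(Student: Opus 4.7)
I would attack this in two stages. First, obtain a variance bound for $Z_R := \rho_\omega(B_R)/|B_R|$ at the optimal $R^{-d/2}\log R$ scale; then upgrade to the stated stretched-exponential concentration using the exponential moments of the random scale $\mathscr{H}$ from Theorem~\ref{thm:hk-bounds}.

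\textbf{Step 1 (Variance via correlation decay).} Since $\mathbb{E}[\rho]=1$ and $\mathbb{P}$ is shift-invariant,
\[
\mathrm{Var}(Z_R)=\frac{1}{|B_R|^2}\sum_{x,y\in B_R}C(y-x),\qquad C(z):=\mathbb{E}[\rho\cdot(\rho\circ\theta_z-1)].
\]
The task thus reduces to a correlation-decay estimate $|C(z)|\lesssim (1+|z|)^{-d}\log(|z|+2)$; summation then yields $\mathrm{Var}(Z_R)\lesssim R^{-d}(\log R)^2$. The decay estimate would come from Theorem~\ref{thm:var_decay}, reading $C(z)=\mathbb{E}[\rho(\omega)(\rho(\theta_z\omega)-1)]$ as a semigroup-mixing quantity at time $t\asymp|z|^2$. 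Since $\rho$ is unbounded, one truncates at the event $\{\mathscr{H}\le M\}$ and controls the residual via $\mathbb{E}[\exp(c\mathscr{H}^{d-\varepsilon})]<\infty$.

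\textbf{Step 2 (Concentration).} With the variance bound in hand, the task is to produce moment bounds $\mathbb{E}[|Z_R-1|^q]\le C_q(R^{-d/2}\log R)^q$ with $C_q$ growing controllably in $q$. I would either iterate the correlation-decay argument to higher-order mixed moments $\mathbb{E}\prod_i(\rho\circ\theta_{z_i}-1)$, or apply a discrete Efron--Stein / second-order Poincar\'{e} inequality to the i.i.d.\ field $\{\omega(x)\}$, with the single-site sensitivity of $\rho_\omega(B_R)$ expressed through the Green functions bounded in Theorem~\ref{thm:Green-bound} (whose prefactors $\mathscr{H}^{O(1)}$ are absorbable by its exponential moments). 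Markov's inequality applied to such moment bounds, with the optimal choice $q=q(t)$, combined with a preliminary truncation $\{\mathscr{H}\le t^{1/(d-\varepsilon)}\}$ for the tail contribution, yields the $\exp(-ct^p)$ bound; the restriction $p<1/2$ reflects the interaction between the (essentially sub-Gaussian) concentration produced by the variance and the stretched-exponential tail of $\mathscr{H}^{d-1}$ controlling $\rho$.

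\textbf{Main obstacle.} The critical difficulty lies in the sharp correlation-decay step. A direct use of Theorem~\ref{thm:var_decay} produces only $|z|^{-d/2}$ decay and hence $\mathrm{Var}\lesssim R^{-d/2}$, one power short of optimal. Recovering the $|z|^{-d}$ rate with the borderline logarithm appears to require exploiting the $L^2(\mathbb{Q})$ and $L^1(\mathbb{P})$ mixing statements of Theorem~\ref{thm:var_decay} jointly, together with the specific structure of $\rho$ as the invariant density rather than as a generic bounded test function. Handling the unboundedness of $\rho$ via consistent truncation across both steps is the main technical hurdle, and it is precisely this interplay that produces the admissible range $p\in(0,1/2)$ in the final statement.
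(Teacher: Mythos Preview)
Your primary route via correlation decay does not close, and you correctly diagnose why: Theorem~\ref{thm:var_decay} applied to a bounded local $\zeta$ gives at best $t^{-d/4}$, hence $|z|^{-d/2}$, and there is no mechanism to upgrade this to $|z|^{-d}$ by combining the $L^2(\mathbb Q)$ and $L^1(\mathbb P)$ statements. Moreover Theorem~\ref{thm:var_decay} is stated only for $d\ge 3$. In fact the correlation bound $|\mathrm{Cov}(\rho(0),\rho(x))|\lesssim |x|^{-d}\log|x|$ is a \emph{consequence} of the machinery that proves Theorem~\ref{thm:ave-rho-quant} (it is Proposition~\ref{prop:correlation}(ii), proved from the sensitivity formula and a covariance Efron--Stein inequality), not an input.

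Your secondary route --- Efron--Stein applied directly to $Z_R$ with single-site sensitivities expressed through Green functions --- is the paper's actual approach, but your sketch is missing the two ingredients that make it work at the optimal rate. First, one needs a \emph{formula}, not just a bound: the paper shows (Proposition~\ref{prop:sensitivity-rho}) that
\[
\partial'_y\rho_\omega(x)=\rho_\omega(y)\sum_{i=1}^d(\partial'_y\omega)(y,y+e_i)\,\nabla_{i;1}^2 G^{\omega'_y}(y,x),
\]
derived by Duhamel's formula for the heat kernel and a limit $\mathbb Q_t\to\mathbb Q$. The crucial point is that the \emph{second difference} $\nabla^2 G$ appears, not $G$ itself. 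Second, bounding $|\nabla^2 G^{\omega'_y}(y,B_R)|$ is exactly the job of the large-scale $C^{1,1}$ estimate (Theorem~\ref{thm:c11}), applied to the $\omega$-harmonic function $G(\cdot,B_R)$ away from $B_R$ and with source $-\mathbbm 1_{B_R}$ inside; this yields a bound $\lesssim \mathscr H_y^2\mathscr H^{*d-1}\log R$ for $|y|\le 4R$ and $\lesssim \mathscr H_y^2\mathscr H^{*d-1}|y|^{-2}u(|y|)R^d$ for $|y|>4R$. The $\log R$ in the near-field case is precisely the origin of the $\log R$ in the theorem. Summing $(\partial'_y Z_R)^2$ over $y$ then gives $F(R)\asymp R^{-d/2}\log R$, and the $L^q$ Efron--Stein inequality (Lemma~\ref{lem:efron-stein}) produces the stretched-exponential tail.

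If you only invoke the Green function bounds of Theorem~\ref{thm:Green-bound} (as your sketch suggests), you control $G(y,x)$ rather than $\nabla^2 G(y,x)$ and lose a factor $|y|^2$ in the decay of the sensitivity, landing back at the suboptimal variance scale.
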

Note that the rate $R^{-d/2}\log R$ is very close to the size $R^{-d/2}$ of the diffusive scaling. In other words,  to some extent the field $(\rho_\omega(x))_{x\in\Z^d}$ behaves quite similarly to i.i.d. random variables.  Hence,  we expect the rate $R^{-d/2}\log R$ obtained here to be either optimal or nearly optimal.
For non-divergence form PDEs,  the volume-doubling property for the measure $\rho_\omega(\cdot)$ was proved by Bauman \cite{Baum84}. 
An algebraic convergence rate $R^{-\gamma}$ for some $\gamma \in (0,1)$ was proved recently by Armstrong, Fehrman, Lin \cite[Theorem 1.4]{AFL-22}.

In the course of obtaining our homogenization results in this paper,  sensitivity estimates together with an Efron-Stein type inequality are used to control fluctuations of a random field around its mean. This method was used in the stochastic homogenization of divergence-form operators, e.g., \cite{NS-97, GNO-15}.  To facilitate this strategy,  obtaining sensitivity estimates (with respect to the change of the environment) is crucial, and $C^{1,1}$ estimates for the random equation is necessary,  cf.  e.g., \cite{GNO-15, AL-17}.
 To obtain $C^{1,1}$ regularity for the heterogeneous equation, we follow the idea of Armstrong, Lin \cite{AL-17} who generalized the compactness argument of Avellaneda, Lin \cite{AL-87} to the random non-divergence form setting.
 
The key observation in the proof of Theorem~\ref{thm:ave-rho-quant} is explained as follows.
 Although the invariant measure $\rho_\omega(x)$ does not have an explicit expression,  it can be interpreted as the long term frequency of visits to location $x$.  Hence,  modifying the local value of the environment is related to the Green function of the RWRE.  Guided by this intuition, we will obtain a formula for the sensitivity estimate of the invariant measure in terms of the Green function.

\medskip

As indicated in Theorem~\ref{thm:ave-rho-quant}, the field $\{\rho_\omega(x): x\in\Z^d\}$ is expected to have weak enough correlation so that the behavior of its mean fluctuation over $B_R$ resembles (up to a logarithmic factor) that of i.i.d.  random variables.  The following proposition reveals the localization and correlation properties of the invariant measure.

\begin{proposition}
\label{prop:correlation}
Assume {\rm (A1), (A2)}. 
\begin{enumerate}[(i)]
\item\label{item:prop-cor-1} There exists a random variable $\ms Y>0$ with $\mb E[\ms Y^p]<C_p$, $\forall p<2/5$, such that for any $x\in\Z^d, r\ge 1$,  letting $\rho_r(x)=\rho_{r,\omega}(x):=\mb E[\rho_\omega(x)|\omega(y):y\in B_r(x)]$, 
\[
\abs{\rho(x)-\rho_r(x)}
\le 
\left\{
\begin{array}{lr}
\ms Y r^{-1}\log r, &d=2\\
\ms Y r^{-d/2}, & d\ge 3.
\end{array}
\right.
\]
\item 
For any $x,y\in\Z^d$ with $x\neq y$,
\[
\left|\cov_{\mb P}(\rho(x),\rho(y))\right|
\lesssim
\left\{
\begin{array}{lr}
|x-y|^{-2}[\log(1+|x-y|)]^3 &d=2\\
|x-y|^{-d}\log(1+|x-y|) & d\ge 3.
\end{array}
\right.
\]
\end{enumerate}
\end{proposition}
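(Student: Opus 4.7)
The plan is to exploit sensitivity estimates (in the spirit of the Nash--Gloria--Neukamm--Otto method used for divergence-form equations) together with the Green function bounds of Theorem~\ref{thm:Green-bound}. The heart of the matter is a pointwise sensitivity bound of the form
\[
\bigl|\rho_\omega(x) - \rho_{\tilde\omega}(x)\bigr|
\;\lesssim\; \ms{Z}(\omega,z)\,\rho_\omega(z)\,\bigl|\nabla^2 G^\omega(x,z)\bigr|,
\]
where $\tilde\omega$ agrees with $\omega$ everywhere except at the single site $z$, $\nabla^2 G^\omega(x,z)$ denotes an appropriate discrete second difference in the $z$-variable, and $\ms{Z}(\omega,z)$ is a random prefactor with controlled polynomial moments. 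To derive this bound I would use that $\rho_\omega$ is characterized by the dual equation $L_\omega^*\rho_\omega = 0$: modifying $\omega(z)$ changes $L_\omega^*$ only through a local second-order operator supported at $z$, so first-order perturbation together with summation by parts expresses the change in $\rho_\omega(x)$ as a second discrete derivative of $G^\omega(x,\cdot)$ at $z$ times the local mass $\rho_\omega(z)$. Converting the pointwise Green function bounds of Theorem~\ref{thm:Green-bound} into the needed orders $|\nabla^2 G^\omega(x,z)|\lesssim \tx^{C}(1+|x-z|)^{-d}$ for $d\ge 3$, and $\tx^{C}(1+|x-z|)^{-2}$ for $d=2$ (using the potential kernel $A^\omega$ in place of $G^\omega$), is accomplished via the large-scale $C^{1,1}$-type regularity of Armstrong--Lin \cite{AL-17}.

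For part~(i), I would enumerate the sites in $\Z^d\setminus B_r(x)$ and decompose
\[
\rho_\omega(x) - \rho_r(x) = \sum_{z:\, |z-x|>r} D_z\rho_\omega(x),
\]
where $D_z$ is the martingale difference of $\rho_\omega(x)$ associated with revealing $\omega(z)$. Orthogonality in $L^2(\mb P)$ combined with Step~1 yields
\[
\mb E\bigl[(\rho_\omega(x) - \rho_r(x))^2\bigr]
\;\lesssim\; \sum_{|z-x|>r}(1+|x-z|)^{-2d} \;\asymp\; r^{-d} \qquad (d\ge 3),
\]
with the analogous $r^{-2}(\log r)^2$ bound when $d=2$. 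Upgrading to the almost-sure statement with a random multiplier $\ms{Y}$ is standard: apply Chebyshev's inequality plus a union bound over dyadic $r$, using H\"older to separate the stochastic prefactor from the deterministic decay. The moment exponent $p<2/5$ reflects the combined cost of two copies of $\rho_\omega$ and the prefactor $\ms{Z}$, whose integrability is controlled by Theorem~\ref{thm:hk-bounds}(a). For part~(ii), the same martingale decomposition produces
\[
\cov_{\mb P}(\rho(x),\rho(y)) = \sum_{z\in\Z^d}\mb E[D_z\rho(x)\,D_z\rho(y)],
\]
and the sensitivity bound turns the right-hand side into $\sum_z (1+|x-z|)^{-d}(1+|y-z|)^{-d}$, which on splitting at the midpoint between $x$ and $y$ and comparing with integrals yields the claimed $|x-y|^{-d}\log(1+|x-y|)$ decay; the $d=2$ case produces the cube of the logarithm from accumulating logarithms of the potential kernel together with the logarithmic Green function corrections at the sensitivity step.

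The principal obstacle is Step~1 itself. Unlike the divergence-form setting, $\rho_\omega$ admits no variational characterization and no explicit formula, so extracting the correct order $(1+|x-z|)^{-d}$ for the second difference $|\nabla^2 G^\omega(x,z)|$ requires the large-scale $C^{1,1}$-type regularity of Armstrong--Lin carefully quantified so that only polynomial moments of the random prefactor $\tx$ are incurred. Threading this regularity through the adjoint-perturbation identity while preserving enough stochastic integrability to produce the final moment bound $p<2/5$, and simultaneously keeping the $\nabla^2 G^\omega$ factor integrable against $\rho_\omega(z)^2$, is the most delicate point of the argument; once it is in place, parts (i) and (ii) both follow by the (now routine) martingale-difference calculus.
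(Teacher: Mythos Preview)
Your proposal is correct and follows essentially the same route as the paper: the key input is the sensitivity formula $\vd{y}\rho_\omega(x)=\rho_\omega(y)\sum_i(\vd{y}\omega)(y,y+e_i)\nabla_{i;1}^2 G^{\omega_y'}(y,x)$ (the paper's Proposition~\ref{prop:sensitivity-rho}), combined with the $C^{1,1}$ estimate (Theorem~\ref{thm:c11}) and the Green-function bounds (Theorem~\ref{thm:Green-bound}) to control $|\nabla^2 G|$, after which part~(i) follows from the $L^p$ Efron--Stein inequality (Lemma~\ref{lem:efron-stein}) applied over $y\notin B_r$ and part~(ii) from the covariance Efron--Stein identity \eqref{eq:cov-ef-st} and the convolution estimate. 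The only notable differences are that the paper derives the sensitivity formula via Duhamel for the heat kernel $p_t^\omega$ and a careful $t\to\infty$ limit (rather than direct perturbation of $L_\omega^*\rho=0$, which requires some care since $\rho$ is only defined as a limit), and that it uses the BBLM moment inequality directly rather than martingale orthogonality plus Chebyshev and a dyadic union bound; these are cosmetic rather than substantive variations.
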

This is perhaps the first characterization of the correlation structure of the invariant measure (with algebraic mixing rates) for the non-divergence form operator $\tr(\omega\nabla^2 u)$ in a random environment.

\medskip

Next, we derive rates of convergence for the stochastic homogenization of the Dirichlet problem \eqref{eq:elliptic-dirich} for non-divergence form difference operators.

\begin{theorem}
\label{thm:opt-quant-homo}
Assume {\rm(A1), (A2)}.
Suppose $f,g$ are both in $C^4(\R^d)$, and $\zeta$ is a measurable function of $\omega(0)$ with $\norm{\zeta/\tr\omega(0)}_\infty <\infty$.
Let $\bar u$ be the solution of the Dirichlet problem \eqref{eq:effective-ellip} with $\bar a=E_{\Q}[\omega(0)/\tr\omega(0)]>0$ and $\bar\psi:=E_{\Q}[\zeta/\tr\omega(0)]$.

For any $\error\in(0,1)$,  $R\ge 2$, there exists a random variable $\ms Y=\ms Y(R,\error, \omega)>1$ with $\mb E[\exp(\ms Y^{{d}/{(2d+2)}-\error})]<C$ such that the solution $u$ of \eqref{eq:elliptic-dirich} satisfies
\[
\max_{x\in B_R}|u(x)-\bar{u}(\tfrac{x}{R})|
\lesssim
\left\{
\begin{array}{lr}
\tfrac{1}R\norm{\bar u}_{C^4(\bar\B_1)}\ms Y &\text{ when }d\ge 3\\
\frac{(\log R)^{2+\error}}R\norm{\bar u}_{C^4(\bar\B_1)}\ms Y &\text{ when }d=2.
\end{array}
\right.
\]
\end{theorem}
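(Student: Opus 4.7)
The plan is to carry out a two-scale expansion of $\bar u$ augmented by stationary correctors, then apply the discrete Alexandrov--Bakelman--Pucci (ABP) estimate to the two-scale remainder, and bound the resulting error terms using the quantitative inputs established earlier in the paper.

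After dividing \eqref{eq:elliptic-dirich} by $\tr\omega(x)$, the equation reads $L_\omega u(x)=R^{-2}f(x/R)\tilde\zeta(\theta_x\omega)$ with $\tilde\zeta:=\zeta/\tr\omega(0)$. First, I construct stationary, $\mb Q$-mean-zero correctors: for each $i\in\{1,\ldots,d\}$, a function $\phi_i$ with $L_\omega\phi_i=\tfrac{1}{2}(\bar a_i-a_i(0))$, and a source corrector $\phi_0$ with $L_\omega\phi_0=\tilde\zeta-\bar\psi$. Solvability holds because the right-hand sides have $\mb Q$-mean zero by the very definitions of $\bar a$ and $\bar\psi$. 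For $d\ge 3$, the formula $\phi_i=\int_0^\infty P_t[\tfrac{1}{2}(\bar a_i-a_i(0))]\,\dd t$ converges in $L^2(\mb Q)$ thanks to the variance decay $\var_{\mb Q}(P_t\zeta)\lesssim(1+t)^{-d/2}$ of Theorem~\ref{thm:var_decay}; in $d=2$ a time-truncation at $t\asymp R^2$ costs a $\log R$ factor. A sensitivity characterization---a single-site perturbation of $\omega$ at $y$ changes $\phi_i$ by amounts controlled by the Green function $G^\omega(y,\cdot)$, cf.~Theorem~\ref{thm:Green-bound}---combined with the quantitative ergodic estimates of Theorem~\ref{thm:ave-rho-quant} and Proposition~\ref{prop:correlation} yields $\max_{x\in B_R}|\phi_i(\theta_x\omega)|\lesssim\ms Y$ in $d\ge 3$ and $\lesssim\ms Y(\log R)^{1+\error}$ in $d=2$, and similarly for $\phi_0$, for a random variable $\ms Y$ with the stretched-exponential moments stated in the theorem.

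Next, I define
\[
v(x):=\bar u(x/R)+R^{-2}\sum_i\phi_i(\theta_x\omega)\,\partial_{ii}\bar u(x/R)+R^{-2}\phi_0(\theta_x\omega)\,f(x/R).
\]
A discrete Taylor expansion combined with the corrector equations gives
\[
L_\omega v(x)-R^{-2}f(x/R)\tilde\zeta(\theta_x\omega)=E_{\rm Tay}(x)+E_{\rm cr}(x),
\]
where $E_{\rm Tay}(x)=O(R^{-4}\|\bar u\|_{C^4})$ is the fourth-order Taylor remainder and $E_{\rm cr}$ collects discrete product-rule cross terms of the form $R^{-3}a_j(x)\nabla_j\phi_i(\theta_x\omega)\cdot(\nabla\partial_{ii}\bar u)(x/R)$ (and similar with $\phi_0,f$). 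Applying the discrete ABP inequality to $u-v$ in $B_R$ yields
\[
\max_{B_R}|u-v|\lesssim R\,\bigl\|E_{\rm Tay}+E_{\rm cr}\bigr\|_{\ell^d(B_R)}\,|B_R|^{-1/d}+\max_{\partial B_R}|u-v|,
\]
with the boundary piece dominated by $R^{-2}\|\bar u\|_{C^2}$ times the corrector bound of Step~1.

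The main obstacle is to control $\|E_{\rm cr}\|_{\ell^d(B_R)}\,|B_R|^{-1/d}$ at scale $R^{-2}$, which after multiplication by $R$ produces the sought $R^{-1}$ rate. The difficulty is that $\nabla\phi_i$ is only small \emph{on average}, not pointwise. I plan to rewrite $E_{\rm cr}$ via a discrete summation-by-parts, producing spatial averages of $\phi_i$---equivalently, averages of $\rho$---over mesoscopic boxes, then invoke Theorem~\ref{thm:ave-rho-quant} and Proposition~\ref{prop:correlation} to bound those averages at the CLT scale. The stretched-exponential exponent $d/(2d+2)-\error$ in the moment estimate for $\ms Y$ arises from a H\"older interpolation between the polynomial moment of the auxiliary variable in Proposition~\ref{prop:correlation} (of order strictly less than $2/5$) and the stretched-exponential $\tx$-moments of order $d-\error$ from Theorem~\ref{thm:hk-bounds}; in $d=2$, the $(\log R)^{2+\error}$ factor reflects the $d=2$ logarithmic Green function (Theorem~\ref{thm:Green-bound}) together with the corrector-truncation logarithm.
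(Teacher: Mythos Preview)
Your proposal has a genuine gap in dimensions $d=3,4$, and the fix you sketch for the gradient term does not go through.

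First, the stationary corrector $\phi_i=\int_0^\infty P_t[\tfrac12(\bar a_i-a_i(0))]\,\dd t$ does \emph{not} exist in $L^2(\mb Q)$ for $d=3,4$: the variance bound $\var_{\mb Q}(P_t\zeta)\lesssim(1+t)^{-d/2}$ of Theorem~\ref{thm:var_decay} gives $\|P_t\zeta\|_{L^2(\mb Q)}\lesssim(1+t)^{-d/4}$, and the time integral of this diverges unless $d\ge 5$. This is exactly why the paper only claims a stationary corrector for $d\ge 5$ (Theorem~\ref{thm:global_krt}\eqref{item:gkrt-4}); for $d=3,4$ the corrector grows like $\mu(|x|)$ and only its \emph{gradient} is stationary. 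So Step~1 of your plan fails as written for $d=3,4$, and a truncation (approximate corrector $\apk$) does not help: Corollary~\ref{cor:c01-apk} only gives $|\nabla\apk|\lesssim\sqrt{\mu(R)}$, which is $R^{1/4}$ in $d=3$, spoiling the $R^{-1}$ rate.

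Second, your plan to rescue the cross term $E_{\rm cr}\sim R^{-3}\nabla\phi_i$ by a ``summation by parts producing averages of $\rho$'' is not workable inside the ABP framework. ABP bounds $\max|u-v|$ by $R^2\|L_\omega(u-v)\|_{d;B_R}$; you cannot integrate by parts inside a normalized $\ell^d$ norm, and Theorems~\ref{thm:ave-rho-quant} and Proposition~\ref{prop:correlation} concern the invariant measure $\rho$, not the correctors $\phi_i$---there is no mechanism in your outline that converts averages of $\nabla\phi_i$ into averages of $\rho$.

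The paper's proof avoids both issues by constructing a \emph{local corrector} $\lock$ (Definition~\ref{def:new-corrector}) that solves $L_\omega\lock=R^{-2}\lock\,\eta+\psi-\bar\psi$ with a cutoff $\eta$ vanishing on $B_{2R}$. The associated Green function $\locg(\cdot,y)$ is then $\omega$-harmonic on $B_{2R}\setminus\{y\}$, so the large-scale $C^{0,1}$ estimate (Theorem~\ref{thm:c11}) applies and yields a sharp bound on $\nabla\locg$ (Proposition~\ref{prop:new-corr-prop}\eqref{item:new-corr-4}). Feeding this into the sensitivity formula $\vd y\nabla\lock(x)\propto\nabla_x\locg(x,y)$ and the Efron--Stein inequality gives $\|\nabla\lock\|_{d;B_R}\lesssim\delta(R)$ directly (Theorem~\ref{thm:new-corrector-bd}\eqref{item:lock-c01}), with $\delta(R)=1$ for $d\ge 3$. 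The two-scale expansion (Lemma~\ref{lem:two-scale-exp}) then immediately yields the rate. The point is that neither the stationary corrector nor the approximate corrector has a Green function with this harmonicity property---this is the missing idea in your approach.
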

Thus, for $d\geq 3$, we obtain the optimal rate of convergence for the homogenization of the Dirichlet problem, which is generically of scale $R^{-1}$.
This is consistent with the generically optimal rate $R^{-1}$ for the periodic setting.
See the classical books Bensoussan, Lions, Papanicolaou \cite{BLP}, Jikov, Kozlov, Oleinik \cite{JKO}  for the derivation, and Guo, Tran, Yu \cite{GTY-19}, Sprekeler, Tran \cite{ST-21}, Guo, Sprekeler, Tran \cite{GST-22} for discussions on the optimality of the rates.
We also refer the reader to Jing, Zhang \cite{Jing-Zhang} for the optimal convergence rate in the presence of a large drift.
It is not clear to us what the optimal rate is when $d=2$, which deserves further analysis.

To prove Theorem \ref{thm:opt-quant-homo}, we apply the classical method of two-scale expansions and regularity estimates of the correctors in Section \ref{subsec:local corr}.
As evident from the two-scale expansion (Lemma~\ref{lem:two-scale-exp}), it is not the size of the correctors, but rather the {\it gradient of the correctors} that determines the rate of the homogenization of the non-divergence form problem.  However,  a stationary corrector on the whole space was constructed by Armstrong, Lin \cite[Section 7]{AL-17} only for $d\ge 5$, and for $d<5$ the approximate corrector (cf.  \eqref{eq:eq_of_phihat}) which is usually used in the literature does not possess enough regularity for optimal estimates.   To overcome these challenges, we construct local correctors (cf. Definition~\ref{def:new-corrector}) which have the desired regularity inside the ball.  Roughly speaking, the approximate corrector corresponds to the RWRE subject to an exponential killing time, while our local corrector only kills the RWRE outside of $B_R$ and as a result it does not ``feel"  any perturbations inside the ball $B_R$. By doing this we sacrifice the stationarity of the corrector, but will gain better regularity in $B_R$.  More detailed probability intuition can be found below \eqref{eq:local-func}.

We believe that our construction of the local correctors is new in the literature.

Furthermore, we establish the existence, stationarity, and uniqueness of the (global) corrector, completing the corrector theory of the  non-divergence form operator in the i.i.d. environment for all dimensions $d\ge 2$. To this end, define, for 
$R\ge 1$, the dimension-dependent functions $\mu=\mu_d$ and $\delta=\delta_d$ as
\begin{equation}\label{eq:def-mu}
\mu(R):=
\left\{
\begin{array}{lr}
R & d=2\\
R^{1/2} &d=3\\
(1\vee \log R)^{1/2} &d=4\\
1 &d\ge 5,
\end{array}
\right.
\end{equation}
\begin{equation}\label{eq:def-delta}
\delta(R):=
\left\{
\begin{array}{lr}
1 &\text{ when }d\ge 3\\
(1\vee \log R)^{3/2} &\text{ when }d=2.
\end{array}
\right.
\end{equation}

\begin{theorem}
\label{thm:global_krt}
Let $\psi$ be an $L^\infty(\mb P)$-bounded function of $\omega(0)$ with $\norm{\psi}_\infty=1$.
For each $d\ge 2$ and $\mb P$-a.e. $\omega$, there exists a function $\krt=\krt_\omega:\Z^d\to\R$ that solves 
\[
L_\omega\krt(x)=\psi(\theta_x\omega)-\bar\psi \quad\text{ for }x\in\Z^d
\]
with the following properties
\begin{enumerate}[(i)]
\item\label{item:gkrt-2} When $d\ge 5$, $\mb E[\exp(c|\krt(x)/\mu(|x|)|^p)]<C_p$ for any $p\in(0,\tfrac{2d}{3d+2}),x\in\Z^d$;

When $d=3,4$, $\mb E[\exp(c|\krt(x)/\mu(|x|)|^p)]<C_p$ for any $p\in(0,\tfrac{2d}{3d+4}),x\in\Z^d$;

When $d=2$,
$\mb E\left[
\exp\big(
C\abs{\tfrac{\krt(x)}{|x|\log(|x|\vee 2)^{3/2}}}^p
\big)
\right]\le C_p$ for any $p\in(0,\tfrac13), x\in\Z^d$;
\item\label{item:gkrt-3} $\mb E[\exp(c|\nabla\krt(x)/\delta(|x|)|^q)]\le C_q$ for any $q\in(0,\tfrac{d}{2d+2}), x\in\Z^d$;
\item\label{item:gkrt-2nd-der}
 $\mb E[\exp(c|\nabla^2\krt(x)|^r)]\le C_r$ for any $r\in(0,\tfrac{d}{2}), x\in\Z^d$;
\item\label{item:gkrt-4}(Stationarity properties)
\begin{itemize}
\item When $d\ge 5$,  the field $\{\krt_\omega(x):x\in\Z^d\}$ is stationary (under $\mb P$);
\item When $d\ge 3$, 
 $\nabla\krt_\omega(x)=\nabla\krt_{\theta_x\omega}(0)$ for all $x\in\Z^d$, and the field $\{\nabla\krt_\omega(x):x\in\Z^d\}$  is stationary;
\item When $d=2$, $\nabla^2\krt_\omega(x)=\nabla^2\krt_{\theta_x\omega}(0)$ for all $x\in\Z^d$, and the field $\{\nabla^2\krt_\omega(x):x\in\Z^d\}$  is stationary.
\end{itemize}
\end{enumerate}
Moreover,  such a corrector $\krt$ is unique up to an additive constant when $d\ge 3$,  and is unique up to an affine transformation when $d=2$.
\end{theorem}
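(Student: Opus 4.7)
The plan is to obtain $\krt$ as a limit of approximate correctors. For each $\lambda>0$, let $\apk_\lambda$ solve the massive equation $\lambda\apk_\lambda - L_\omega\apk_\lambda = \psi(\theta_x\omega) - \bar\psi$ on $\Z^d$, which admits the probabilistic representation
\[
\apk_\lambda(x,\omega)=\int_0^\infty e^{-\lambda t}(P_t\tilde\psi)(\theta_x\omega)\,\dd t,\qquad \tilde\psi:=\psi-\bar\psi.
\]
Combining Theorem~\ref{thm:var_decay} (variance decay of $P_t\tilde\psi$ at rate $t^{-d/2}$) with gradient bounds on the semigroup---which by parabolic regularity gain a factor $t^{-1/2}$ per derivative from the heat-kernel controls of Theorem~\ref{thm:hk-bounds}---I obtain
\[
\norm{\apk_\lambda(0)}_{L^2(\mb Q)}\lesssim\int_0^\infty e^{-\lambda t}t^{-d/4}\,\dd t,\qquad \norm{\nabla\apk_\lambda(0)}_{L^2(\mb Q)}\lesssim\int_0^\infty e^{-\lambda t}t^{-1/2-d/4}\,\dd t,
\]
with an analogous bound on $\norm{\nabla^2\apk_\lambda(0)}_{L^2(\mb Q)}$ carrying an extra factor of $t^{-1/2}$. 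These integrals are uniformly bounded in $\lambda$ for, respectively, $d\ge 5$, $d\ge 3$, and $d\ge 2$---exactly the stratification that appears in the theorem.

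I would next pass to the limit $\lambda\to 0$ in the appropriate derivative: directly in $\apk_\lambda(0)$ when $d\ge 5$; in $\nabla\apk_\lambda(0)$ when $d=3,4$; and in $\nabla^2\apk_\lambda(0)$ when $d=2$. The Cauchy property in $L^p(\mb Q)$ follows by subtracting the equations for two parameters $\lambda,\lambda'$, whose right-hand side is $O(\lambda\vee\lambda')$ in $L^p$, and then estimating the relevant derivative through the Green-function bounds of Theorem~\ref{thm:Green-bound}. Since each $\apk_\lambda$ is itself stationary, so are its discrete derivatives, and stationarity is preserved in the limit, yielding item (iv). I would then recover $\krt$ by summing the limiting gradient (resp.\ Hessian) along a lattice path from $0$ to $x$: the discrete curl-free identity $\nabla_e\nabla_\ell\apk_\lambda=\nabla_\ell\nabla_e\apk_\lambda$ passes to the limit and ensures path-independence. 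The growth rates $\mu(R)$ then arise from summing stationary mean-zero increments against the variance bounds just established, producing boundedness in $d\ge 5$, $\sqrt{\log|x|}$ in $d=4$, $|x|^{1/2}$ in $d=3$, and $|x|(\log|x|)^{3/2}$ in $d=2$ (the last after a double summation).

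To upgrade the $L^p$ control to the stretched exponential moments in (i)--(iii), I would invoke the sensitivity/Efron--Stein framework already threaded through the paper: a single-site resampling of $\omega(y)$ perturbs $\apk_\lambda$ and its derivatives through the Green function, whose algebraic tails from Theorem~\ref{thm:Green-bound}, together with the $C^{1,1}$ regularity of the local correctors of Section~\ref{subsec:local corr}, provide a sensitivity estimate with the summable decay needed for concentration. Balancing the Efron--Stein variance against the deterministic $\tx^{d-1}$-bound produces the stretched exponents $d/(2d+2)$ on $\nabla\krt$ and $d/2$ on $\nabla^2\krt$; the smaller exponents on $\krt$ itself in (i) arise from the extra summation in reconstructing $\krt$ from its derivatives.

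For uniqueness, I would prove a Liouville statement for $L_\omega$: any $h:\Z^d\to\R$ with $L_\omega h=0$ and sub-linear growth (for $d\ge 3$) or sub-affine growth (for $d=2$) must be constant (respectively affine). This follows by a standard rescaling argument using the large-scale $C^{1,1}$ regularity inherited from the local correctors, which forces the deviation of $h$ from its best affine approximation to decay across scales and hence vanish. The main obstacle I foresee is controlling $\lambda\apk_\lambda\to 0$ precisely enough to identify the limit equation $L_\omega\krt=\psi-\bar\psi$, especially in $d=2,3$ where $\apk_\lambda$ itself is not bounded in $L^p$; one must work at the level of $\nabla\apk_\lambda$ (or $\nabla^2\apk_\lambda$) throughout and then integrate back up without breaking stationarity. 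A secondary subtlety is the $d=2$ Liouville step at affine growth, which requires iterating the excess-decay argument one extra level.
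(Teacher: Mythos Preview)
Your overall architecture---take limits at the level of the appropriate derivative ($\apk_\lambda$ for $d\ge 5$, $\nabla\apk_\lambda$ for $d=3,4$, $\nabla^2\apk_\lambda$ for $d=2$), then integrate back---matches the paper's strategy in spirit, and your uniqueness argument via Liouville is essentially what the paper does. But there is a real gap at the heart of your gradient estimates.

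You claim that ``parabolic regularity gains a factor $t^{-1/2}$ per derivative'' on the semigroup, giving $\norm{\nabla P_t\tilde\psi}_{L^2}\lesssim t^{-1/2-d/4}$. This is \emph{not} available for non-divergence form operators: Krylov--Safonov yields only $C^\gamma$ regularity with a small $\gamma(d,\kappa)$, not $C^1$. The large-scale $C^{0,1}$ theory (Theorem~\ref{thm:c11}) does exist, but it requires the equation to look like $L_\omega u=\psi(\theta_x\omega)+f$ with $\psi$ local and $f$ small or H\"older. When you pass to the sensitivity estimate $\vd y\nabla\apk_\lambda(x)=-[\cdots]\nabla_x\apg_\omega(x,y)$, you need to bound $\nabla_x\apg_\omega(\cdot,y)$, and $\apg_\omega(\cdot,y)$ solves $L_\omega\apg=\lambda\apg-\mathbbm 1_y$, which is \emph{nowhere} $\omega$-harmonic. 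The paper flags exactly this obstruction (see the Remarks following Lemma~\ref{lem:two-scale-exp} and Proposition~\ref{prop:new-corr-prop}): Theorem~\ref{thm:c11} does not deliver the needed $|\nabla_x\apg(x,y)|\lesssim |x-y|^{-1}\nu(|x-y|)$, and consequently $\nabla\apk_\lambda$ is only controlled at size $\sqrt{\mu(R)}$ (Corollary~\ref{cor:c01-apk}), not $\delta(R)$.

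The paper's fix is to abandon $\apk_\lambda$ in dimensions $d\le 4$ and build instead the \emph{local corrector} $\lock_R$ of Definition~\ref{def:new-corrector}, whose Green function $\locg_\omega(\cdot,y)$ \emph{is} $\omega$-harmonic on $B_{2R}\setminus\{y\}$; this is precisely what makes Proposition~\ref{prop:new-corr-prop}(b) go through and yields the optimal $\nabla\lock_R$ bound in Theorem~\ref{thm:new-corrector-bd}(b). The global corrector is then obtained as a subsequential limit of $\lock_R(\cdot)-\lock_R(0)$ (or, in $d=2$, with the full affine part subtracted). Your reference to ``the $C^{1,1}$ regularity of the local correctors of Section~\ref{subsec:local corr}'' does not rescue the plan: that regularity pertains to $\lock_R$ and $\locg$, not to $\apk_\lambda$ and $\apg$, and it is the latter that governs your sensitivity estimate. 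A secondary issue is that Theorem~\ref{thm:var_decay} is stated only for $d\ge 3$, so the $t^{-d/4}$ input you invoke in $d=2$ is also not on the table.
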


%
\medskip

Note that the effective matrix $\bar a=E_{\Q}[\omega/\tr\omega]$ does not have an explicit expression. 
Even though by Birkhoff's ergodic theorem, $\Q$ can be approximated by
\[
\lim_{n\to\infty} \frac{1}{n} \sum_{i=0}^{n-1}E_\omega[\psi(\theta_{X_i}\omega)] = E_\Q[\psi] \quad \mb P\text{-a.s.}
\]
for any $L^1$ function $\psi$ on environments, in order to better understand the effective matrix $\bar a$ it is important to quantify the speed of this convergence.
To this end, we set, for $T\ge 1$,
\begin{equation}
\nu(T)=
\label{eq:mu2}
\left\{
\begin{array}{lr}
T^{-1/2} & d=2\\
T^{-3/4} &d=3\\
T^{-1}(\log T)^{1/2} &d=4\\
T^{-1} &d\ge 5.
\end{array}
\right.
\end{equation}
We will quantify the ergodicity of the environmental process for both the continuous- and discrete-time random walks in a balanced random environment.
\begin{theorem}
\label{thm:quant-ergo}
Assume ({\rm A1), (A2), (A3)}. Let $\nu$ be as in \eqref{eq:mu2}. For any $0<p<\tfrac{2d}{3d+2}$, there exists $C=C(d,\kappa,p)$ such that for $T, n\ge 2$ and any $t\ge 0$,
\begin{align*}
&\mb P\left(
\Abs{
\tfrac{1}{T}E_\omega\big[\int_0^T \psi(\theta_{Y_s}\omega)\dd s\big]-\bar\psi
}\ge 
t\nu(T)\norm{\psi}_\infty
\right)
\le 
C\exp(-\tfrac{t^p}{C}), \\
&\mb P\left(
\Abs{
\tfrac{1}{n}E_\omega\big[\sum_{i=0}^{n-1} \psi(\theta_{X_i}\omega)\big]-\bar\psi
}\ge 
t\nu(n)\norm{\psi}_\infty
\right)
\le 
C\exp(-\tfrac{t^p}{C}).
\end{align*}
\end{theorem}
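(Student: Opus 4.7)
The plan is to derive both inequalities from Dynkin's formula applied to the global corrector $\krt$ of Theorem~\ref{thm:global_krt}. Since $L_\omega\krt=\psi-\bar\psi$, we have
\[
\frac{1}{T}E_\omega\left[\int_0^T\psi(\theta_{Y_s}\omega)\dd s\right]-\bar\psi=\frac{E_\omega[\krt(Y_T)]-\krt(0)}{T},
\]
and an analogous identity in discrete time by telescoping the martingale increments $\krt(X_{i+1})-\krt(X_i)-L_\omega\krt(X_i)$. Thus the task reduces to controlling $T^{-1}E_\omega[\krt(Y_T)]$ (respectively $n^{-1}E_\omega[\krt(X_n)]$); the boundary term $\krt(0)/T$ is negligible on the scale $\nu(T)$ by the stretched-exponential tail of $\krt(0)$ from Theorem~\ref{thm:global_krt}(i).

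I would bound the main term by writing $E_\omega[\krt(Y_T)]=\sum_x p_T^\omega(0,x)\krt(x)$ and combining the Gaussian heat-kernel upper bound of Theorem~\ref{thm:hk-bounds} with the growth estimate $|\krt(x)|\le \mu(|x|)\,Z(x)$, where $Z(x)$ has stretched-exponential tails. Since $p_T^\omega(0,\cdot)$ essentially concentrates on $B_{\sqrt T}$ up to Gaussian tails, this yields $T^{-1}|E_\omega[\krt(Y_T)]|\lesssim \mu(\sqrt T)\,T^{-1}\,\mathcal Z=\nu(T)\,\mathcal Z$, where $\mathcal Z$ is a nonnegative random variable that absorbs both the random prefactor $\tx^{d-1}$ in the heat-kernel bound and the worst-case ratio $|\krt(x)|/\mu(|x|)$ over the relevant range of $x$. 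The identity $\mu(\sqrt T)/T=\nu(T)$ is a direct check against the definitions \eqref{eq:def-mu} and \eqref{eq:mu2}.

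For the concentration, I would split the sum $\sum_x p_T^\omega(0,x)|\krt(x)|$ at a radius $R\asymp \sqrt T(1+t)^{\alpha}$ for a suitable $\alpha>0$: inside $B_R$, a union bound with the tail bound from Theorem~\ref{thm:global_krt}(i) gives a uniform control $\sup_{|x|\le R}|\krt(x)|/\mu(|x|)\le C(\log|B_R|)^{1/p}+t$; outside $B_R$, the Gaussian decay of $p_T^\omega(0,\cdot)$ together with the at-most-polynomial growth of $\mu$ absorbs the excess (using also the exponential moment of $\tx$ from Theorem~\ref{thm:hk-bounds}). Optimizing in $\alpha$ and combining the two pieces produces the stretched-exponential tail bound $\mb P(\mathcal Z\ge t)\le C\exp(-ct^p)$.

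The main obstacle I anticipate is calibrating the exponent to reach $p<\tfrac{2d}{3d+2}$ as claimed, since for $d\le 4$ the pointwise tail on $\krt(x)$ in Theorem~\ref{thm:global_krt}(i) produces only a strictly smaller exponent. To close this gap, I would exploit the stationarity of $\nabla\krt$ (and of $\nabla^2\krt$ when $d=2$) and replace the direct pointwise union bound on $\krt$ by a summation-by-parts argument, expressing the heat-kernel weighted averages of $\krt$ in terms of heat-kernel weighted averages of the stationary fields $\nabla\krt$ (or $\nabla^2\krt$), whose stretched-exponential moments in items (iii)--(iv) of Theorem~\ref{thm:global_krt} have the required, more favorable exponents. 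The discrete-time inequality follows from exactly the same scheme, replacing $p_T^\omega$ by $p_n^\omega$ and integrals by sums throughout.
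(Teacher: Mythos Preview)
Your approach differs from the paper's: the paper uses the \emph{approximate} corrector $\phi=\apk(0;\psi,\sqrt T)$ rather than the global corrector $\krt$. The point is that $\apk$ is stationary, so the Kipnis--Varadhan identity $\int_0^T P_s\psi\,\dd s=P_T\phi-\phi-T^{-1}\int_0^T P_s\phi\,\dd s$ combines with the semigroup contraction $\|P_t\phi\|_{L^q(\Q)}\le\|\phi\|_{L^q(\Q)}$ under the invariant measure to give $\bigl\|\int_0^T P_s\psi\,\dd s\bigr\|_{L^q(\Q)}\le 3\|\phi\|_{L^q(\Q)}$ in one line; Lemma~\ref{lem:approx-cor-concentration} then supplies the exponent $p<\tfrac{2d}{3d+2}$ for $\phi/\mu(\sqrt T)$ directly, and a H\"older inequality with the $\rho$-moments transfers the bound from $\Q$ to $\mb P$. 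No heat-kernel manipulations, union bounds, or integration by parts are needed.

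Your route has a genuine gap. First, a circularity: in the paper the moment bound on $\krt$ in Theorem~\ref{thm:global_krt}\eqref{item:gkrt-2} for $d\ge 5$ is proved \emph{using} Theorem~\ref{thm:quant-ergo}, so you cannot invoke it here. Second, and more substantively, the summation-by-parts repair you sketch for $d\le 4$ cannot reach the target exponent. Discrete integration by parts in space moves a difference from $\krt$ onto $p_T^\omega(0,\cdot)$, i.e.\ it expresses averages of $\nabla\krt$ in terms of $\nabla p_T^\omega$ weighted against $\krt$, not the other way around; to invert this you would need an antiderivative of $p_T^\omega(0,\cdot)$, for which there is no useful bound. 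If instead you write $\krt(x)-\krt(0)$ as a path sum of increments $\nabla\krt$, you trade the size $\mu(|x|)$ for a path of length $|x|$ of stationary increments, which yields the wrong scale (linear rather than $\mu$-growth) and an even smaller stretched-exponential exponent, since $\tfrac{d}{2d+2}<\tfrac{2d}{3d+2}$ for all $d\ge 2$. The global corrector is simply the wrong object here because it fails to be stationary for $d\le 4$; the approximate corrector at scale $\sqrt T$ is built to carry both the correct size $\mu(\sqrt T)$ and the correct exponent simultaneously.
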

\begin{remark}We remark that an (unknown) algebraic rate for the convergence of the ergodic average was obtained in \cite[Theorem~1.2]{GPT-19} in the discrete setting and recently in \cite[Theorem~1.6]{AFL-22} in the PDE setting.

Recall that Theorem~\ref{thm:var_decay} (from  \cite{GT-22}) states that, when $d\ge 3$, the typical size of $P_t\psi-\bar\psi$ is of scale $t^{-d/4}$. 
Observe that the typical size $\nu(T)$ of the ergodic average in Theorem~\ref{thm:quant-ergo} satisfies (for $T\ge 2$)
\[
\nu(T)
\lesssim
\frac 1T\int_1^T t^{-d/4}\dd t.
\]
(The sign $\lesssim$ can be replaced by $\asymp$ except for $d=4$ when $\nu(T)$ is a $\sqrt{\log T}$ factor smaller than the right side.)
Hence, Theorem~\ref{thm:quant-ergo} can be regarded as the integral version of Theorem~\ref{thm:var_decay} which  holds for all $d\ge 2$ and which has much better  exponential integrability.
\end{remark}

As a consequence of Theorem~\ref{thm:quant-ergo}, we obtain explicit convergence rates for the QCLT of the balanced random walk.
\begin{corollary}
\label{cor:quant-clt}
Assume {\rm (A1), (A2)}.  For any $0<q<\tfrac{2d}{5(3d+2)}$, $n\ge 2$, there exists a random variable $\ms Y=\ms Y(\omega,q;n,\kappa,d)$ with $\mb E[\exp(\ms Y^q)]\le C$ such that, $\mb P$-almost surely,
for any unit vector $\ell\in\R^d$,
\[
\sup_{r\in\R}
\Abs{
P_\omega
\left(
X_{n}\cdot\ell/\sqrt n \le r\sqrt{\ell^T\bar a\ell}
\right)
-\Phi(r)
}
\le 
C\nu(n)^{1/5}\ms Y,
\]
where $\Phi(r)=(2\pi)^{-1/2}\int_{-\infty}^r e^{-x^2/2}\dd x$ for $r \in \R$.
\end{corollary}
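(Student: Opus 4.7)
The plan is to exploit the martingale structure of $X_n\cdot\ell$ afforded by the balanced environment, and to feed the quantitative ergodicity of Theorem~\ref{thm:quant-ergo} into a quantitative martingale central limit theorem. By \eqref{def:omegaxei}, the increments $\xi_{i+1}:=(X_{i+1}-X_i)\cdot\ell$ satisfy $|\xi_{i+1}|\le 1$ and $E_\omega[\xi_{i+1}\mid\mathcal{F}_i]=0$, so $M_n:=X_n\cdot\ell$ is a $P_\omega$-martingale with conditional variance $E_\omega[\xi_{i+1}^2\mid\mathcal{F}_i]=\ell^T a_\omega(X_i)\ell=\bar\psi(\theta_{X_i}\omega)$, where $\bar\psi(\omega):=\ell^T a_\omega(0)\ell$ is a bounded local function of $\omega(0)$. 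The limiting variance is $\sigma^2:=\ell^T\bar a\ell=E_{\Q}[\bar\psi]$, and the discrepancy of the normalized quadratic variation from its limit is the ergodic average
\[
U_n:=\frac{1}{n}\sum_{i=0}^{n-1}\psi(\theta_{X_i}\omega),\qquad \psi:=\bar\psi-\sigma^2.
\]

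The next step is to invoke a Haeusler--Bolthausen-type quantitative martingale CLT for martingales with bounded increments, which for fixed $\omega$ yields
\[
\sup_{r\in\R}\left|P_\omega\!\left(M_n/(\sigma\sqrt n)\le r\right)-\Phi(r)\right|\,\le\, C\,E_\omega[|U_n|^2]^{1/5}+Cn^{-1/2}.
\]
Since $n^{-1/2}\lesssim\nu(n)^{1/5}$ for every $d\ge 2$, the last term is harmlessly absorbed, and the task reduces to showing that, on an event of high $\mb P$-probability, $E_\omega[|U_n|^2]\lesssim \nu(n)\,\ms Y^{5}$ with stretched-exponential tails on $\ms Y$. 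I would split $E_\omega[|U_n|^2]=\var_{P_\omega}(U_n)+(E_\omega[U_n])^2$. The squared bias is exactly what Theorem~\ref{thm:quant-ergo} controls, giving $|E_\omega[U_n]|\lesssim\nu(n)\,\ms Y_1$ with the right tails. For the quenched variance I would expand
\[
\var_{P_\omega}(U_n)=\frac{2}{n^2}\!\sum_{0\le i<j\le n-1}\!\!\cov_{P_\omega}\!\left(\psi(\theta_{X_i}\omega),\psi(\theta_{X_j}\omega)\right)+\frac{1}{n^2}\sum_i\var_{P_\omega}\!\left(\psi(\theta_{X_i}\omega)\right),
\]
apply the $P_\omega$-Markov property at time $i$ to rewrite each covariance as $E_\omega[\psi(\theta_{X_i}\omega)\,(P^{d}_{j-i}\psi)(\theta_{X_i}\omega)]-E_\omega[\psi(\theta_{X_i}\omega)]\,E_\omega[(P^{d}_{j-i}\psi)(\theta_{X_i}\omega)]$ where $P^d$ denotes the discrete-time environmental semigroup, and sum the series using either the decay of $P^d_k\psi$ (the discrete-time analogue of Theorem~\ref{thm:var_decay}) or, equivalently, a Kipnis--Varadhan summation against the global corrector $\krt$ of Theorem~\ref{thm:global_krt}, whose gradient integrability is exactly what is needed to close the bound.

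The main obstacle is precisely this last step: Theorem~\ref{thm:quant-ergo} concentrates only the quenched \emph{mean} $E_\omega[U_n]$, whereas the martingale CLT requires a quenched $L^2(P_\omega)$-estimate on $U_n$ itself with stretched-exponential tails in $\omega$. Propagating $\mb P$-concentration through the two-point correlations $\cov_{P_\omega}(\psi(\theta_{X_i}\omega),\psi(\theta_{X_j}\omega))$, using the corrector theory of Theorem~\ref{thm:global_krt} together with a sensitivity-type argument in the spirit of the proof of Theorem~\ref{thm:ave-rho-quant}, is where the real work lies; it is also the source of the (likely suboptimal) exponent $1/5$ in the final rate, coming from the Haeusler martingale CLT with its $p=2$ moment condition on the quadratic variation.
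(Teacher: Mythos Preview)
Your overall architecture is the same as the paper's: use the Haeusler martingale CLT \cite{EH-88} with $\delta=1$, so that the Berry--Esseen error is $\lesssim (L_{n,2}+N_{n,2})^{1/5}$, where $N_{n,2}=E_\omega[|U_n|^2]$ is exactly your quenched second moment and $L_{n,2}\lesssim 1/n\le\nu(n)$ is trivial from $|\xi_i|\le 1$. So the reduction to controlling $E_\omega[|U_n|^2]$ with stretched-exponential tails in $\omega$ is correct.

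Where you go astray is the ``main obstacle'' paragraph. The split $E_\omega[|U_n|^2]=\var_{P_\omega}(U_n)+(E_\omega U_n)^2$ is unnecessary, and the machinery you propose (covariance decay via Theorem~\ref{thm:var_decay}, corrector theory, sensitivity estimates) is much heavier than what is needed. The paper's trick is to work under $\Q$ rather than $\mb P$: since $(\bar\omega^i)_{i\ge 0}$ is \emph{stationary} under $\Q\times P_\omega$, after writing $n^2N_{n,2}=2\sum_{i} E_\omega\big[\psi(\bar\omega^i)\,E_\omega^{X_i}[\sum_{j=0}^{n-i-1}\psi(\bar\omega^j)]\big]$ via the Markov property and using $|\psi|\lesssim 1$, one gets
\[
\|N_{n,2}\|_{L^q(\Q)}\ \lesssim\ \frac{1}{n^2}\sum_{i=0}^{n-1}\Big\|\sum_{j=0}^{n-i-1}P_j\psi\Big\|_{L^q(\Q)}.
\]
Each summand on the right is precisely what Theorem~\ref{thm:quant-ergo} bounds, yielding $E_\Q[\exp(c|N_{n,2}/\nu(n)|^p)]\le C$ for $p<\tfrac{2d}{3d+2}$. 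One then transfers from $\Q$ to $\mb P$ by H\"older and the moment bound on $\rho^{-1}$ from Theorem~\ref{thm:hk-bounds}. No corrector, no sensitivity argument, and the proof works uniformly for all $d\ge 2$ (your proposed use of Theorem~\ref{thm:var_decay} would only cover $d\ge 3$).
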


An algebraic rate for the QCLT was proved in \cite[Theorem 1.3]{GPT-19}.
We remark that for the model of random walk in random conductances,  algebraic rates similar to ours was proved in \cite{AN-17} for dimensions $d\ge 3$.

\newcounter{stepctr}[theorem]
\newcommand{\steps}{\stepcounter{stepctr}{\bf Step \thestepctr.}}

\section{Large scale \texorpdfstring{$C^{0,1}$}{TEXT} and \texorpdfstring{$C^{1,1}$}{TEXT} estimates}
In this section, we apply the ideas of Avellaneda, Lin \cite{AL-87, AL-89} in the periodic setting to the  discrete random setting. 
The key idea is quite intuitive and natural: large-scale solutions of $L_\omega u(x)=\psi(\theta_x\omega)+f(x)$ are well-approximated by those of the homogenized equation with an algebraic rate thanks to Proposition \ref{thm:quant-homo}. 
As the latter are harmonic, they possess rather nice estimates (see Proposition \ref{prop:harmonic} below on the scaling property.) 
Therefore, by iterations, scalings and the triangle inequalities, the better regularity of the homogenized equation is inherited by the heterogeneous equation. 
It is crucial to note two points here.
Firstly, in each iteration step, the scaling is done by using the nice estimates in Proposition \ref{prop:harmonic} of the homogenized limit, and the triangle inequality and Proposition \ref{thm:quant-homo} are used to pass this estimate to the solution $u$ of the heterogeneous equation. 
Secondly, in the random setting, one can only go down to radii greater than the homogenization radius in the iterations, which therefore gives us only large scale estimates.
The generalization of this idea to the random non-divergence form PDE setting was first done by Armstrong, Lin \cite{AL-17} who made the observation that an algebraic rate is sufficient for such an iteration.

The main result in this section, Theorem \ref{thm:c11}, can be considered as a discrete version of  Armstrong, Lin \cite[Theorem 3.1, Corollary 3.4]{AL-17} in terms of the large scale $C^{0,1}$ and $C^{1,1}$ regularity.  We remark that for $\omega$-harmonic functions in a genuinely $d$-dimensional balanced environment, a $C^{0,1-\error}$ regularity was achieved by Berger, Cohen, Deuschel, Guo \cite[Corollary 1.4]{BCDG-18} using coupling arguments.

As can be seen in the following Subsection \ref{subsec:regularity},  this sort of compactness argument, although is applied to the random setting here, is deterministic in its core.

\subsection{Some regularity properties of deterministic functions}\label{subsec:regularity}
This subsection contains the key tools for the compactness arguments used in our paper. It is completely deterministic and can be read independently of other parts of the paper.  The lemmas presented here concern large scale $C^{k,1}$, $k\ge 0$, properties of deterministic functions. 

For any function $f$ on a set $A$ and $\alpha\in(0,1]$, define
\[
\osc_A f:=\sup_{x,y\in A}|f(x)-f(y)|,
\qquad
[f]_{\alpha;A}=\sup_{x,y\in A, x\neq y}\frac{|f(x)-f(y)|}{|x-y|^\alpha},
\]
and, if $A$ is a finite set, for $p\in(0,\infty)$, we define 
\[
\norm{f}_{p;A}:=\left(\frac{1}{\#A}\sum_{x\in A}|f|^p\right)^{1/p}, \quad
\norm{f}_{\infty;A}=\max_{x\in A}|f(x)|.
\]

For any $j\ge 0$, let $\fct_j$ denote the set of $j$-th order polynomials, with $\fct_0=\R$. In fact,  in our paper we will only use the cases $j=0,1,2$. 

Define,  for function $f:\R^d\to\R$ and a bounded set $A\subset\R^d$,  $j\ge 1$, 
\begin{equation}\label{eq:def-der}
\der{A}^{j}(f)=\inf_{p\in\fct_{j-1}}\sup_{A}|f-p|=\frac{1}{2}\inf_{p\in\fct_{j-1}}\osc_{A}(f-p).
\end{equation}
$\der{A}^j$ satisfies the triangle inequality. Namely, $\der{A}^j(f\pm g)\le\der{A}^j(f)+\der{A}^j(g)$. 
When $A=B_R$ is the discrete ball, $R>0$,  we simply write 
\[\der{R}^j:=\der{B_R}^j.\] 
Note that for $j\ge 1$,  the above term normalized
\begin{equation}\label{eq:def-nder}
\nder{R}^j(f):=\frac{\der{R}^j(f)}{R^j}
\end{equation}
is a large scale analogue of the $j$-th order derivative.

For any $r>0$,  $\theta\in(0,\tfrac13)$, define a sequence of exponentially increasing radii $(r_k)_{k\ge 0}$ by
\[
r_k=r_k(r,\theta):=\theta^{-k}r, \quad k\ge 0.
\]

The following elementary lemma confirms the intuition that ``the integral of the $(j+1)$-th derivative is the $j$-th derivative".

\begin{lemma}\label{lem:integral}
For any function $f:\Z^d\to\R$ and $r>0, \theta\in(0,\tfrac13), n\in\N, j\ge 1$, 
\[
\nder{r_0}^j(f)\le \nder{r_n}^j(f)+3\theta^{-j}\sum_{k=0}^n r_k\nder{r_k}^{j+1}(f).
\]
\end{lemma}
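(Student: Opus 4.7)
The plan is to reduce the lemma to a single-step comparison between consecutive scales $r_k$ and $r_{k+1}$, and then telescope. The target single step is
\[
\der{r_k}^j(f) \le \theta^j\,\der{r_{k+1}}^j(f) + 2\,\der{r_{k+1}}^{j+1}(f).
\]
Dividing this by $r_k^j$ and using $r_k = \theta r_{k+1}$ converts it into $\nder{r_k}^j(f) \le \nder{r_{k+1}}^j(f) + 2\theta^{-j} r_{k+1}\nder{r_{k+1}}^{j+1}(f)$; summing over $k = 0, 1, \ldots, n-1$ telescopes to $\nder{r_0}^j(f) \le \nder{r_n}^j(f) + 2\theta^{-j}\sum_{k=1}^{n} r_k\nder{r_k}^{j+1}(f)$, which is dominated by the right-hand side of the lemma (the $k=0$ term is nonnegative and $2\theta^{-j}\le 3\theta^{-j}$).

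For the single step, I would select $P \in \fct_j$ with $\sup_{B_{r_{k+1}}}|f - P| = \der{r_{k+1}}^{j+1}(f) =: \beta$ and split $f = P + g$, so that $\sup_{B_{r_{k+1}}}|g| \le \beta$. The triangle inequality for $\der{\cdot}^j$ (noted right after \eqref{eq:def-der}), together with $\der{r_k}^j(g) \le \sup_{B_{r_k}}|g| \le \beta$ (take $Q = 0 \in \fct_{j-1}$ and use $B_{r_k} \subset B_{r_{k+1}}$), gives $\der{r_k}^j(f) \le \der{r_k}^j(P) + \beta$, and symmetrically $\der{r_{k+1}}^j(P) \le \der{r_{k+1}}^j(f) + \beta$. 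What remains is the homogeneous scaling
\[
\der{r_k}^j(P) \le \theta^j\,\der{r_{k+1}}^j(P) \qquad \text{for every } P \in \fct_j,
\]
which, combined with the previous two bounds, yields $\der{r_k}^j(f) \le \theta^j\der{r_{k+1}}^j(f) + (1+\theta^j)\beta \le \theta^j\der{r_{k+1}}^j(f) + 2\beta$.

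Since $\der{\cdot}^j$ is insensitive to polynomials of degree $\le j-1$, the scaling inequality reduces to the case where $P = L$ is homogeneous of degree $j$. At the continuous level, the dilation $x \mapsto \theta x$ maps $\B_{r_{k+1}}$ bijectively onto $\B_{r_k}$, carries $\fct_{j-1}$ into itself after rescaling by $\theta^{-j}$, and multiplies $|L - Q|$ by exactly $\theta^j$, giving the exact equality $\der{r_k}^j(L) = \theta^j\der{r_{k+1}}^j(L)$ for suprema over continuous balls. Passing from the continuous balls $\B_r$ to the discrete balls $B_r = \B_r \cap \Z^d$ uses that for polynomials of degree $\le j$ the sup-norms on $\B_r$ and $B_r$ are comparable (with constants depending only on $j$ and $d$), together with the slack afforded by $\theta < 1/3$: since consecutive radii differ by a factor of at least three, any multiplicative discretization loss at one scale can be absorbed into the gap between the sharp telescoping constant $2\theta^{-j}$ and the stated $3\theta^{-j}$.

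The main obstacle is precisely this discrete scaling step, since $\Z^d$ is not invariant under the dilation $x \mapsto \theta x$. I expect the cleanest fix to be a direct monotonicity argument for the function $R \mapsto \der{R}^j(L)/R^j$ restricted to the ``tripled'' scales forced by $\theta < 1/3$, exploiting that each such jump in radius brings enough new lattice points in every direction to witness the true size of the degree-$j$ part of $P$; the corner case of very small $r_0$, where $B_{r_0}$ reduces to $\{0\}$ and the left-hand side vanishes, is then handled trivially.
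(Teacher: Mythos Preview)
Your single-step-and-telescope strategy is exactly the paper's; the paper's one-step estimate carries the extra term $\der{r_k}^{j+1}(f)$ (its bound is $2[\der{r_k}^{j+1}(f)+\der{r_{k+1}}^{j+1}(f)]$ rather than your $2\der{r_{k+1}}^{j+1}(f)$), but the telescoping and the final constant $3\theta^{-j}$ are identical. The paper also uses the scaling $\der{r}^j(q)=(r/R)^j\der{R}^j(q)$ for $q$ homogeneous of degree $j$, simply asserting it as a fact.

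You are right to flag the discrete scaling step as the real obstacle, but neither of your proposed repairs works. The slack between $2\theta^{-j}$ and $3\theta^{-j}$ sits in front of the \emph{sum}; any multiplicative loss $C>1$ coming from a discrete--continuous norm comparison would instead multiply $\nder{r_{k+1}}^j(f)$ in the single step, and after $n$ iterations this becomes $C^{n}$ in front of $\nder{r_n}^j(f)$, which no constant on the sum can absorb. Your hoped-for monotonicity of $R\mapsto\nder{R}^j(L)$ along tripled scales is also false: with $d=1$, $j=1$, $L(x)=x$, $r_0=1.01$, $\theta=0.3$, one has $B_{r_0}=\{-1,0,1\}$ and $B_{r_1}=\{-3,\dots,3\}$, so $\nder{r_0}^1(L)=100/101>90/101=\nder{r_1}^1(L)$. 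Since $\nder{r}^2(L)\equiv 0$ for all $r$, this same example shows that the lemma as literally stated on discrete balls is false (take $n=1$). The scaling identity---and hence both the paper's argument and yours---holds verbatim on \emph{continuous} balls $\B_r$, and the paper's downstream uses of the lemma are in the large-radius regime where the discrete/continuous distinction is immaterial; so the gap is real but harmless for the applications.
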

\begin{proof}
For any $j$-th order homogeneous polynomials $p,q\in\fct_{j}$,  $R>r>0$, by the triangle inequality,
\begin{align*}
\der{r}^j(p)
&\le \der{r}^j(q)+\der{r}^j(p-q)\\
&\le 
(\tfrac{r}{R})^j\der{R}^j(q)+\der{r}^j(f-p)+\der{r}^j(f-q)
\end{align*}
where in the second inequality we used the fact that $\der{r}^j(q)= (\tfrac{r}{R})^j\der{R}^j(q)$ for all $j$-th order homogeneous polynomial $q$.  Hence, by the inequality above,
\begin{align*}
\der{r}^j(f)
&\le 
\der{r}^j(f-p)+\der{r}^j(p)\\
&\le 
2\der{r}^j(f-p)+\der{r}^j(f-q)+(\tfrac{r}{R})^j\der{R}^j(q)\\
&\le 
2\der{r}^j(f-p)+\der{r}^j(f-q)+(\tfrac{r}{R})^j[\der{R}^j(f)+\der{R}^j(f-q)]\\
&\le 
2[\der{r}^j(f-p)+\der{R}^j(f-q)]+(\tfrac{r}{R})^j\der{R}^j(f).
\end{align*}
Taking infimum over all $j$-th order homogeneous polynomials $p,q\in\fct_j$, we get
\[
\der{r}^j(f)
\le 
2[\der{r}^{j+1}(f)+\der{R}^{j+1}(f)]+(\tfrac{r}{R})^j\der{R}^j(f).
\]
Replacing $r, R$ by $r_k, r_{k+1}$ ,  and using notation \eqref{eq:def-nder}, the above inequality yields
\[
\nder{r_k}^j(f)-\nder{r_{k+1}}^{j}(f)
\le 
2[r_k\nder{r_k}^{j+1}(f)+\theta^{-j}r_{k+1}\nder{r_{k+1}}^{j+1}(f)].
\]
Summing both sides over $k=0,\ldots,n-1$,  the lemma is proved.
\end{proof}

The following lemma will be crucially employed later in our derivation of large scale regularity estimates in Subsection \ref{sec:regularity}.

\begin{lemma}\label{lem:deterministic}
Let $j\ge 1$, $m\in\N$, $r,\alpha>0$. Let $A_r\ge 0$ be a constant depending on $r$. If  for $f:\Z^d\to\R$ , $k=0,\ldots,m-1$, and  all $\theta\in(0,\tfrac13)$,
\begin{equation}\label{eq:iteration-assum}
\der{r_k}^{j+1}(f)\lesssim_j
\theta^{j+1}\der{r_{k+1}}^{j+1}(f)+r_{k+1}^{-\alpha}\der{r_{k+1}}^j(f)+r_{k+1}^jA_{r_{k+1}},
\end{equation}
then there exist $\theta=\theta(j), N=N(j,\alpha)$ such that, for $N\le r\le R\le r_m$,
\[
\der{r}^j(f)\le 13\theta^{-2j}\left(\tfrac{r}{R}\right)^j\der{R}^j(f)+\sum_{k\ge 1:r_k\le R}A_{r_k}.
\]
\end{lemma}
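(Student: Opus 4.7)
My plan is to combine an iteration of the hypothesis \eqref{eq:iteration-assum} with Lemma~\ref{lem:integral}, and then close a self-consistent bootstrap. Let $C_j$ denote the implicit constant in the $\lesssim_j$ of the hypothesis. First, I would pick $\theta=\theta(j)\in(0,\tfrac13)$ small enough that $C_j\theta\le \tfrac14$; this forces the contraction ratio $\lambda:=C_j\theta^{j+1}$ to satisfy $\lambda/\theta^j\le \tfrac14$, which makes every geometric series that later appears converge with an explicit rate. Then I would pick $N=N(j,\alpha)$ large enough that for $r\ge N$ the tail $\sum_{k\ge 1}r_k^{-\alpha}=r^{-\alpha}\theta^\alpha/(1-\theta^\alpha)$ is smaller than $\theta^{2j}/(24C_j)$, which is the threshold required to close the bootstrap below.

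Next I would iterate \eqref{eq:iteration-assum} from scale $r_k$ to scale $r_n$ (with $n$ chosen so that $r_n\le R<r_{n+1}$) to obtain
\[
\der{r_k}^{j+1}(f)\le \lambda^{n-k}\der{r_n}^{j+1}(f)+C_j\sum_{l=k}^{n-1}\lambda^{l-k}\bigl(r_{l+1}^{-\alpha}\der{r_{l+1}}^j(f)+r_{l+1}^j A_{r_{l+1}}\bigr).
\]
Feeding this into Lemma~\ref{lem:integral} applied with base radius $r$ and $n$ steps, namely
\[
\nder{r}^j(f)\le \nder{r_n}^j(f)+3\theta^{-j}\sum_{k=0}^n \der{r_k}^{j+1}(f)/r_k^j,
\]
and then swapping the order of summation (using the elementary bound $\sum_{k=0}^l(\lambda/\theta^j)^k\le 2$ guaranteed by my choice of $\theta$), I would arrive at an estimate of the form
\[
\nder{r}^j(f)\le C\theta^{-j}\nder{R}^j(f)+C\theta^{-2j}\sum_l r_{l+1}^{-\alpha}\nder{r_{l+1}}^j(f)+C\theta^{-2j}\sum_l A_{r_{l+1}}
\]
for an explicit $C$ depending only on $C_j$.

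The hard part will be the self-referential term $\sum_l r_{l+1}^{-\alpha}\nder{r_{l+1}}^j(f)$, whose summands are the very quantities I am trying to control. My plan is to close it by a bootstrap: running the argument above at any base radius $r_l\le R$ (instead of $r$) yields the analogous inequality with $\nder{r_l}^j(f)$ on the left, so setting $M:=\max_{0\le l\le m}\nder{r_l}^j(f)$ produces the scalar estimate $M\le C\theta^{-j}\nder{R}^j(f)+CM\theta^{-2j}\sum_k r_{k+1}^{-\alpha}+C\theta^{-2j}\sum_k A_{r_{k+1}}$. The choice of $N$ forces the coefficient of $M$ on the right to be at most $\tfrac12$, so after absorbing that term to the left I get $M\le 2C\theta^{-j}\nder{R}^j(f)+2C\theta^{-2j}\sum_k A_{r_{k+1}}$. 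Since $\nder{r}^j(f)\le M$, multiplying by $r^j$ converts the bound to the stated form on $\der{r}^j(f)$, and careful bookkeeping of the constants along this chain yields the explicit coefficient $13\theta^{-2j}$.
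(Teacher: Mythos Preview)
Your approach is correct and reaches the same conclusion, but it is organized differently from the paper's proof. The paper does not iterate \eqref{eq:iteration-assum} pointwise or introduce the pointwise maximum $M$. Instead, it rewrites the hypothesis as $r_k\nder{r_k}^{j+1}\lesssim_j \theta\, r_{k+1}\nder{r_{k+1}}^{j+1}+\theta^{-j}r_{k+1}^{-\alpha}\nder{r_{k+1}}^j+\theta^{-j}A_{r_{k+1}}$, sums over $k$, and then uses Lemma~\ref{lem:integral} to replace each $\nder{r_k}^j$ on the right by $\nder{r_n}^j+3\theta^{-j}\sum_\ell r_\ell\nder{r_\ell}^{j+1}$. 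This produces a single self-consistent inequality in the quantity $S=\sum_k r_k\nder{r_k}^{j+1}$; choosing $\theta$ small and $r\ge N$ makes the coefficient of $S$ on the right at most $\tfrac12$, so $S$ absorbs directly, and one more appeal to Lemma~\ref{lem:integral} finishes. The difference is thus which scalar quantity carries the self-reference: the paper closes in the summed ``$(j{+}1)$-derivative'' $S$, while you close in the pointwise maximum $M$ of the ``$j$-derivatives''. The paper's route is a bit shorter because it never needs to rerun the argument at every intermediate scale, and the constant $13$ falls out naturally there; your route is perfectly valid, but the claim that the bookkeeping reproduces exactly $13\theta^{-2j}$ is optimistic---you will get some $C(j)\theta^{-2j}$ and should leave it at that. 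One small cleanup: define $M=\max_{0\le l\le n}\nder{r_l}^j$ (with $n$ chosen so that $r_n\le R<r_{n+1}$) rather than up to $m$, since the self-referential sum you need to control only involves indices $\le n$.
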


\begin{proof}
For the simplicity of notations, we suppress the dependency on $f$.  Let $n=n(R,\theta)\le m$ be such that $r_n\le R<r_{n+1}$. Display \eqref{eq:iteration-assum} is equivalent to 
\[
r_k\nder{r_k}^{j+1}\lesssim_j 
\theta r_{k+1}\nder{r_{k+1}}^{j+1}+\theta^{-j}r_{k+1}^{-\alpha}\nder{r_{k+1}}^j+\theta^{-j}A_{r_{k+1}}.
\]
Summing this inequality over $k=0,\ldots,n-1$, we have
\begin{equation}\label{eq:221130-1}
\sum_{k=0}^{n-1}r_k\nder{r_k}^{j+1}
\lesssim_j 
\theta\sum_{k=1}^n r_k\nder{r_k}^{j+1}
+\theta^{-j}\sum_{k=1}^n r_k^{-\alpha}\nder{r_{k}}^j+\theta^{-j}\sum_{k=1}^n A_{r_k}.
\end{equation}
Moreover,  by Lemma~\ref{lem:integral},  $\nder{r_{k}}^j\le \nder{r_{n}}^j+3\theta^{-j}\sum_{\ell=k}^nr_\ell\nder{r_\ell}^{j+1}$. Hence
\begin{align}\label{eq:221130-2}
\sum_{k=1}^n r_k^{-\alpha}\nder{r_{k}}^j
&\le 
\sum_{k=1}^n r_k^{-\alpha}
\left(\nder{r_{n}}^j+3\theta^{-j}\sum_{\ell=k}^nr_\ell\nder{r_\ell}^{j+1}\right)\nn\\
&\le 
C_\alpha r^{-\alpha}\nder{r_{n}}^j
+C_\alpha\theta^{-j} r^{-\alpha}\sum_{\ell=1}^nr_\ell\nder{r_\ell}^{j+1},
\end{align}
where $C_\alpha=1-3^{-\alpha}$. Choosing $\theta=\theta(j)\in(0,\tfrac13)$ sufficiently small, 
when $r\ge N$ for some $N=N(j,\alpha)$, we get from  \eqref{eq:221130-1} and \eqref{eq:221130-2} that
\[
\sum_{k=0}^{n-1}r_k\nder{r_k}^{j+1}
\le 
\frac{1}{2}\sum_{k=1}^n r_k\nder{r_k}^{j+1}+\nder{r_{n}}^j+C_j\theta^{-j}\sum_{k=1}^n A_{r_k}
\]
which implies (Note that $r_n\nder{r_n}^{j+1}\le \nder{r_n}^{j}$)
\[
\sum_{k=0}^{n}r_k\nder{r_k}^{j+1}
\le 4\nder{r_{n}}^j+C_j\theta^{-j}\sum_{k=1}^n A_{r_k}.
\]
This inequality, together with Lemma~\ref{lem:integral}, yields for $r\ge N$,  $\theta=\theta(j)\in(0,\tfrac13)$,
\[
\nder{r_0}^j\le 13\theta^{-j}\nder{r_n}^j+C_j\theta^{-2j}\sum_{k=1}^n A_{r_k}
\le 
 13\theta^{-2j}\nder{R}^j+\sum_{k=1}^n A_{r_k}.
\]
The lemma is proved.
\end{proof}

\begin{remark}
In this subsection we consider $f$ as a function on $\Z^d$ and defined $\fct_j$ to be the set of $j$-th order polynomials just for our convenience.   One may let $f$ be a function on $\R^d$ and  redefine $\fct_j$'s to be other sub-spaces of the polynomials (e.g., the set of harmonic polynomials) and Lemmas \ref{lem:integral}, \ref{lem:deterministic} still hold.
\end{remark}


\subsection{Large scale regularity}\label{sec:regularity}
The goal of this section is to apply Lemma~\ref{lem:deterministic} to obtain $C^{0,1}$ and $C^{1,1}$ regularities for the heterogeneous equations in our random setting.
\begin{theorem}
\label{thm:c11}
Assume {\rm(A1), (A2)},  and that $\psi$ is a local function. Let $R\ge 1$.  
There exists $\alpha=\alpha(d,\kappa)\in(0,\tfrac13)$ such that,
for any any $u$ with $L_\omega u(x)=\psi(\theta_x\omega)+f(x)$ on $B_R$, $j\in\{1,2\}$, $\tx\le r<R$, 
\begin{equation}
\label{eq:oscillation-estimates}
\frac{1}{r^{j}}\inf_{p\in\fct_{j-1}}\osc_{B_r}(u-p)
\lesssim
\frac{1}{R^{j}}
\inf_{p\in\fct_{j-1}}\osc_{B_R}(u-p)+A_{j},
\end{equation}
where the terms $A_j=A_j(R,r)$ have the following bounds (for any $\sigma\in(0,1]$)
\begin{align*}
&A_1\le R^{1-\alpha}\norm{\psi}_\infty+R\norm{f}_\infty
\text{ and }
A_1\le R^{1-\alpha}\norm{\psi+f(0)}_\infty+R^{1+\sigma}[f]_{\sigma;B_R},
\\
&A_2\le r^{-\alpha}\norm{\psi}_\infty+\log(\tfrac{R}{r})\norm{f}_\infty
\text{ and }
A_2\le r^{-\alpha}\norm{\psi+f(0)}_\infty+R^\sigma[f]_{\sigma;B_R}.
\end{align*}
In particular,  recalling the operator $\nabla^2_i$ in \eqref{eq:def-nabla},  for any $R>1$, $j=1,2$,
\begin{equation}\label{eq:c2}
|\nabla^ju(0)|
\lesssim
(\frac{\tx}{R})^j 
\left(
\norm{u-u(0)}_{1;B_R}+R^2\norm{\psi+f(0)}_\infty+R^{2+\sigma}[f]_{\sigma;B_{R}}.
\right)
\end{equation}
\end{theorem}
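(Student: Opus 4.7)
The plan is to execute the Avellaneda--Lin compactness/iteration scheme advertised at the start of Section 2, reducing the whole argument to a single application of the deterministic Lemma~\ref{lem:deterministic}. Fix $j\in\{1,2\}$ and a scale $r_{k+1}=\theta^{-(k+1)}r$ with $r\ge \tx$. At this scale, compare $u$ with the solution $\bar u_{k+1}$ of the homogenized Dirichlet problem $\tfrac12\tr(\bar a D^2 \bar v)=\bar\psi+f$ on $\B_{r_{k+1}}$ with the same boundary data as $u-p$, where $p\in\fct_{j-1}$ is an optimal $j{-}1$-order polynomial approximation to $u$ on $B_{r_{k+1}}$. By Proposition~\ref{thm:quant-homo} (after rescaling from $B_1$ to $B_{r_{k+1}}$ and using $r_{k+1}\ge \tx$ to absorb the $(\tx/r_{k+1})^{1-\error/d}$ factor into a universal constant),
\[
\|u-p-\bar u_{k+1}\|_{L^\infty(B_{r_{k+1}})}\lesssim r_{k+1}^{-\alpha\beta}\bigl(\osc_{B_{r_{k+1}}}(u-p)+r_{k+1}^2\|\psi+f\|_\infty+r_{k+1}^{2+\sigma}[f]_\sigma\bigr)
\]
for some $\beta=\beta(d,\kappa)>0$. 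Choosing $p$ optimally makes the boundary-data norm comparable to $\der{r_{k+1}}^j(u)$, which is the crucial point: it produces the exact $r_{k+1}^{-\alpha}\der{r_{k+1}}^j(u)$ term required in hypothesis \eqref{eq:iteration-assum}, rather than the cruder $\|u\|_\infty$ that would be useless in the iteration.

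Next, since $\bar u_{k+1}$ solves a constant-coefficient non-divergence equation on a Euclidean ball, classical interior Schauder estimates (this is the role of the harmonic scaling Proposition~\ref{prop:harmonic} referenced just before the theorem) give
\[
\der{\theta r_{k+1}}^{j+1}(\bar u_{k+1})\lesssim \theta^{j+1}\der{r_{k+1}}^{j+1}(\bar u_{k+1})+r_{k+1}^{j+1}\bigl(\|\bar\psi+f\|_\infty+r_{k+1}^\sigma[f]_\sigma\bigr).
\]
Combining this with the triangle inequality for $\der{\cdot}^{j+1}$ applied to $u=(u-p-\bar u_{k+1})+\bar u_{k+1}+p$ (and noting $\der{\cdot}^{j+1}(p)=0$ since $\deg p\le j-1\le j$) yields exactly the iterative inequality \eqref{eq:iteration-assum}. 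Feeding it into Lemma~\ref{lem:deterministic} produces
\[
\tfrac{1}{r^j}\der{r}^j(u)\lesssim \tfrac{1}{R^j}\der{R}^j(u)+\sum_{k:\,r_k\le R} A_{r_k},
\]
and the two claimed forms of $A_j$ follow by summing the resulting geometric-type series: for $j=1$ the error terms carry a weight $r_k^{1-\alpha}$ and sum to $R^{1-\alpha}$ times $\|\psi\|_\infty$ (or $\|\psi+f(0)\|_\infty$ in the Hölder version, which absorbs the constant mode of $f$), while for $j=2$ the weight is $r_k^{-\alpha}$, dominated by its smallest scale $r^{-\alpha}$; the logarithmic factor $\log(R/r)$ in $A_2$ reflects the $O(1)$-per-scale contribution of $\|f\|_\infty$ when no Hölder regularity is assumed on $f$.

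For the pointwise bound \eqref{eq:c2}, apply \eqref{eq:oscillation-estimates} at the smallest admissible scale $r=\tx$: on $B_\tx$ the operator $L_\omega$ has coefficients bounded below and above by constants depending only on $\kappa$, so standard deterministic pointwise interior estimates for the discrete non-divergence operator (applied to $u-p$ for the appropriate $p\in\fct_{j-1}$) bound $|\nabla^j u(0)|$ by $\tx^{-j}\inf_{p\in\fct_{j-1}}\osc_{B_\tx}(u-p)$ plus lower-order contributions of $\|\psi+f(0)\|_\infty$ and $[f]_\sigma$. Inserting \eqref{eq:oscillation-estimates} and using the elementary bound $\inf_{p\in\fct_{j-1}}\osc_{B_R}(u-p)\lesssim R^j\norm{u-u(0)}_{1;B_R}$ (modulo the standard Poincaré-type trick on discrete balls) finishes the proof. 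The main obstacle throughout is the bookkeeping in Step 1: ensuring that the Proposition~\ref{thm:quant-homo} error at each scale enters \eqref{eq:iteration-assum} as $r_{k+1}^{-\alpha}\der{r_{k+1}}^j(u)$ rather than in terms of the unnormalized $L^\infty$ norm of $u$, which is precisely what permits Lemma~\ref{lem:deterministic} to close the iteration and gain a full polynomial decay rate $(r/R)^j$ from a mere algebraic homogenization rate.
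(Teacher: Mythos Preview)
Your proposal follows essentially the same route as the paper: apply Proposition~\ref{thm:quant-homo} at each dyadic scale, use the harmonic scaling of Proposition~\ref{prop:harmonic} for the homogenized comparison function, feed the resulting one-step estimate into Lemma~\ref{lem:deterministic}, and then sum the geometric/logarithmic series to read off the stated bounds on $A_1,A_2$. The packaging of the one-step estimate into Lemma~\ref{lem:before-induction} in the paper is exactly your Step~1, with the minor difference that the paper takes the homogenized RHS to be the constant $\bar\psi$ and handles $f$ separately via ABP, whereas you put $\bar\psi+f$ in the RHS; either works.

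One point is mislabeled and would not close as written. In the passage to \eqref{eq:c2} you invoke ``the elementary bound $\inf_{p\in\fct_{j-1}}\osc_{B_R}(u-p)\lesssim R^j\norm{u-u(0)}_{1;B_R}$ (modulo the standard Poincar\'e-type trick on discrete balls)''. There is no Poincar\'e-type inequality that controls an oscillation by an $L^1$-average for general functions; what is actually used here is the weak Harnack inequality for the non-divergence operator $L_\omega$ combined with ABP (this is \eqref{eq:200524-2} in the paper), which genuinely requires that $u$ solve the equation and produces the additional $R^2\norm{\psi+f}_{d;B_R}$ term. Once you replace the Poincar\'e appeal by Harnack/ABP, your derivation of \eqref{eq:c2} is identical to the paper's.
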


\begin{remark}
A weakness of Theorem~\ref{thm:c11} is that estimate \eqref{eq:oscillation-estimates} is not applicable when $\psi$ is not a local function, in which case $\psi$ is forced to be absorbed into the term $f(x)$ which usually yields unsatisfactory bounds.
\end{remark}

As a consequence of \eqref{eq:c2}, any $\omega$-harmonic function on $\Z^d$ with sublinear growth is a constant. That is, if $L_\omega u=0$ on $\Z^d$ and $\max_{B_R}|u|=o(R)$ for all $R>0$, then $u$ is constant.
To prove Theorem~\ref{thm:c11}, it suffices to prove the following lemma.

\begin{lemma}
\label{lem:before-induction}
There exists $\gamma=\gamma(d,\kappa)$ such that, for $R\ge \tx$,  $\theta\in(0,\tfrac{1}{3})$, $1\le j\le 3$ and any $u$ with $L_\omega u(x)=\psi(\theta_x\omega)+f(x)$ for $x\in B_R$, we have
\[
\der{\theta R}^{j}(u)
\lesssim 
R^{-\gamma\beta}\der{R}^2(u)+\theta^{j}\der{R}^{j}(u)+R^{2-\gamma\beta}\norm{\psi}_{\infty}+R^2\norm{f}_{d;B_R}.
\]
\end{lemma}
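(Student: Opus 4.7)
The plan is to adapt the Avellaneda--Lin \cite{AL-87} compactness scheme, as implemented in the random non-divergence form PDE setting by Armstrong--Lin \cite{AL-17}, to the discrete lattice: compare $u$ on $B_R$ with the solution $\bar v$ of the homogenized constant-coefficient equation and then exploit the classical interior $C^{j,1}$ regularity of $\bar v$ to extract a polynomial approximant of degree $j-1$. As a preliminary reduction I subtract the best affine approximator $\ell \in \fct_1$ of $u$ on $B_R$: since $L_\omega \ell = 0$, the function $u' := u - \ell$ satisfies the same PDE with $\|u'\|_{L^\infty(B_R)} = \der{R}^2(u)$, $\der{r}^k(u') = \der{r}^k(u)$ for $k \ge 2$, and the affine piece contributes $\osc_{B_{\theta R}} \ell \lesssim \theta\,\der{R}^1(u)$, matching the $\theta \der{R}^1(u)$ term in the $j=1$ case. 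For $j = 3$ a further reduction by the best quadratic $q^* \in \fct_2$ is useful; this perturbs $L_\omega u$ by $\tfrac12 \tr(a(x) D^2 q^*)$, a bounded term absorbed into $\psi$, which is what ties $\der{R}^2(u)$ (rather than the more natural $\der{R}^{j-1}(u)$) to the right-hand side of the claim.

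Next I let $\bar v$ solve $\tfrac12 \tr(\bar a D^2 \bar v) = \bar\psi + f$ on $\B_R$ with $\bar v = u'$ on $\partial B_R$ and invoke a variant of Proposition~\ref{thm:quant-homo}, with the additive source handled by linear superposition, to obtain for $R \ge \tx$
\[
\|u' - \bar v\|_{L^\infty(B_R)} \lesssim R^{-\gamma\beta}\bigl(\|u'\|_\infty + R^2\|\psi\|_\infty + R^2\|f\|_{d;B_R}\bigr).
\]
Interior Schauder / Calder\'on--Zygmund estimates for the constant-coefficient equation then produce a polynomial $P \in \fct_{j-1}$ (essentially the $(j-1)$-th Taylor polynomial of $\bar v$ at $0$) with
\[
\sup_{B_{\theta R}}|\bar v - P| \lesssim \theta^j \|\bar v\|_{L^\infty(B_R)} + \theta^j R^2\bigl(\|\psi\|_\infty + \|f\|_{d; B_R}\bigr).
\]
The combination $\der{\theta R}^j(u') \le \|u' - \bar v\|_{L^\infty(B_R)} + \sup_{B_{\theta R}}|\bar v - P|$, together with $\|\bar v\|_\infty \le \|u'\|_\infty + \|u' - \bar v\|_\infty \lesssim \der{R}^2(u)$, and re-adding the reduction from the first step, yields the desired estimate after collecting powers of $R$ and $\theta$.

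The main obstacle is the homogenization comparison: Proposition~\ref{thm:quant-homo} is stated for a source of the form $R^{-2} f(x/R)\zeta(\theta_x\omega)$ with H\"older boundary data, whereas here the source is additive and $u'|_{\partial B_R}$ is merely $L^\infty$. The fix I intend is to first invoke the Krylov--Safonov interior H\"older estimate (valid at discrete scales $\ge \tx$) to upgrade $u'$ to a H\"older class on $\partial B_{(1-\delta)R}$, then apply the homogenization on the shrunk ball $B_{(1-\delta)R}$; the constant-factor shrinkage is absorbed into the algebraic rate $R^{-\gamma\beta}$ (with a possibly smaller $\gamma$). A secondary technical point is that the $L^d$ norm on $f$ (rather than the cruder $L^\infty$) appears through the ABP / $W^{2,d}$ flavor of the interior estimate on $\bar v$, which must be threaded carefully through the comparison; the $L^d$-integrability of the Green function for $\bar a$ is exactly what makes this legitimate for $d \ge 2$.
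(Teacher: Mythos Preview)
Your overall scheme is the same as the paper's: shrink to an interior ball via Krylov--Safonov, compare with a constant-coefficient solution using Proposition~\ref{thm:quant-homo}, pull back the $C^{j}$ regularity of the effective solution (Proposition~\ref{prop:harmonic}), and subtract an affine function at the end to convert $\max_{B_R}|u|$ into $\der{R}^2(u)$. Two points where your write-up diverges from the paper deserve comment.

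First, the paper does \emph{not} include $f$ in the effective equation: it takes $\tfrac12\tr(\bar a D^2\bar v)=R^2\bar\psi$ with a \emph{constant} right-hand side, so that Proposition~\ref{prop:harmonic} applies directly and yields the clean $\theta^j$ scaling. The $f$-contribution is isolated earlier by introducing an intermediate $u_1$ with $L_\omega u_1=\psi(\theta_x\omega)$ and boundary data $\tilde u$; the ABP inequality then gives $\max_{B_{2R/3}}|u-u_1|\lesssim R^2\|f\|_{d;B_R}+[\tilde u]_\gamma$, and Proposition~\ref{thm:quant-homo} is applied only to $u_1$ versus $\bar v$. Your proposal to put $f$ into $\bar v$ makes the interior estimate $\sup_{B_{\theta R}}|\bar v-P|\lesssim\theta^j(\cdots)+\theta^j R^2\|f\|_{d;B_R}$ false as written: with $f$ merely in $L^d$ one has $\bar v\in W^{2,d}_{\mathrm{loc}}$, which does not yield pointwise $C^{1,1}$ control (indeed $\nabla\bar v$ need not even be bounded by $\|f\|_{L^d}$, since $|\nabla G|\sim r^{1-d}$ is just barely not in $L^{d/(d-1)}$). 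This is not fatal, because the target inequality carries $R^2\|f\|_{d;B_R}$ \emph{without} a $\theta^j$ prefactor and you can split off the $f$-part of $\bar v$ and bound it crudely by ABP; but the paper's organization avoids the issue entirely.

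Second, the quadratic subtraction you propose for $j=3$ is unnecessary. The paper proves $\der{\theta R}^{j}(u)\lesssim R^{-\gamma\beta}\max_{B_R}|u|+\theta^j\der{R}^j(u)+A$ for all $1\le j\le 3$ uniformly, and only then replaces $u$ by $u-p$ for $p\in\fct_1$; since $L_\omega p=0$, optimizing over $p$ turns $\max_{B_R}|u|$ into $\der{R}^2(u)$ for every $j$. Your explanation that the quadratic reduction ``ties $\der{R}^2(u)$ to the right-hand side'' misidentifies the mechanism: the $\der{R}^2$ comes from the affine subtraction alone.
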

The proof of Lemma~\ref{lem:before-induction} uses the following fact of deterministic harmonic functions.  For completeness, we include its proof in Section~\ref{asec:harmonic} of the Appendix.
\begin{proposition}\label{prop:harmonic}
Recall the notation in \eqref{eq:def-der}. Let $c_0$ be a constant.
Let $v$ be a function satisfying $\tr\bar aD^2v=c_0$ in $\bar\B_R$. Then, for $\theta\in(0,\tfrac13)$, $j\in\{1,2,3\}$ and $R\ge 1$, 
\begin{equation}\label{eq:221231-1}
\der{\B_{\theta R}}^j(v)\le C \theta^j\der{\B_{R/2}}^{j}(v).
\end{equation}
We also have
\begin{equation}\label{eq:221231-2}
\der{\B_{\theta R}}^j(v)
\lesssim
\tfrac{\theta^j}{R}(\sup_{\partial\B_{2R/3}}|v|+R^2|c_0|)+\theta^j\der{2R/3}^j(v).
\end{equation}
\end{proposition}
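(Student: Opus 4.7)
My plan: since $L:=\tr\bar a D^2$ is constant-coefficient and uniformly elliptic, its solutions enjoy classical interior regularity, and I will exploit this in three moves: (i) reduce to $L$-harmonic functions by subtracting an explicit quadratic particular solution; (ii) invoke the standard interior derivative bound for $L$-harmonic functions; (iii) convert the pointwise derivative bound into a scale comparison for $\der^j$ via Taylor's remainder. Setting $h(x):=\frac{c_0}{2\tr\bar a}|x|^2$ yields $Lh=c_0$, so $\tilde v:=v-h$ is $L$-harmonic on $\bar\B_R$; the classical interior estimate $\sup_{\B_{R/4}}|D^j w|\le C_j R^{-j}\sup_{\B_{R/2}}|w|$ for $L$-harmonic $w$ is then a consequence of the standard harmonic-function bound after the linear change of variable $y=\bar a^{-1/2}x$, with the dilation constants absorbed into $C_j$.

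For \eqref{eq:221231-1}, let $p^*\in\fct_{j-1}$ be optimal with $M:=\sup_{\B_{R/2}}|v-p^*|=\der{\B_{R/2}}^j(v)$. Using $p^*+T^{j-1}_0(v-p^*)\in\fct_{j-1}$ as a competitor together with Taylor's remainder gives $\der{\B_{\theta R}}^j(v)\le C(\theta R)^j\sup_{\B_{\theta R}}|D^j v|$. Decompose $D^j v=D^j\tilde v+D^j h$: the interior estimate applied to $\tilde v-p^*$ yields $\sup|D^j\tilde v|\lesssim R^{-j}(M+|c_0|R^2)$, while $|D^j h|\lesssim|c_0|(\theta R)^{2-j}$ for $j\le 2$ and $D^j h=0$ for $j=3$. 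For $j=3$ the estimate is immediate, since $h\in\fct_2=\fct_{j-1}$ gives $\der^3(v)=\der^3(\tilde v)$ outright. For $j\in\{1,2\}$ the residual terms of order $|c_0|R^2$ must be absorbed via the additional inequality $|c_0|R^2\lesssim M$, which I will obtain by applying the maximum principle to the $L$-harmonic function $v-p^*-c_0\phi$ on $\B_{R/2}$, where $\phi(x):=(|x|^2-R^2/4)/(2\tr\bar a)$ solves $L\phi=1$ with $\phi|_{\partial\B_{R/2}}=0$; this forces $|c_0\phi(0)|\lesssim M$ and hence $|c_0|R^2\lesssim M$.

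For \eqref{eq:221231-2}, I will replace the optimal-polynomial comparison by a boundary-value comparison: let $u^*$ be the $L$-harmonic extension of $v|_{\partial\B_{2R/3}}$ into $\B_{2R/3}$. Since $L(v-u^*)=c_0$ with zero boundary data, comparison with the barrier $\phi$ gives $\|v-u^*\|_{L^\infty(\B_{2R/3})}\lesssim|c_0|R^2$, while the interior estimate applied to the $L$-harmonic $u^*$ yields $\sup_{\B_{R/4}}|D^j u^*|\lesssim R^{-j}\sup_{\partial\B_{2R/3}}|v|$. Combining these via the Taylor expansion of $u^*$ at the origin on $\B_{\theta R}$ together with the uniform bound on $v-u^*$ produces the claimed bound; the extra term $\theta^j\der{2R/3}^j(v)$ accommodates the freedom to first subtract an optimal $(j-1)$-polynomial on the larger ball before running the boundary-value comparison on the remainder, which effectively refines the $\sup|v|$ factor into an oscillation-type quantity.

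The main obstacle I anticipate is the case $c_0\ne 0$ for $j\in\{1,2\}$: because the quadratic $h$ lies outside $\fct_{j-1}$, it cannot be absorbed into the polynomial competitor defining $\der^j$, which forces a cross-coupling between the size of $|c_0|$ and $\der^j$ itself. The barrier-and-max-principle argument producing $|c_0|R^2\lesssim\der{\B_{R/2}}^j(v)$ is precisely what closes this coupling, and is the only nontrivial step beyond the purely scaling arguments.
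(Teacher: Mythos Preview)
Your argument for \eqref{eq:221231-1} is correct, though it differs from the paper's. The paper never subtracts a particular quadratic solution; instead it observes that for any $q\in\fct_j$ with $j\le 2$, the gradient $D(v-q)$ is $\bar a$-harmonic (because $L(v-q)$ is a constant), applies the interior estimate to $D(v-q)$, and then converts $\sup|D(v-q)|$ back to $R^{-1}\sup|v-q|$ via the mean value property and the divergence theorem. This one-shot argument sidesteps any need to compare $|c_0|R^2$ with $\der^j$. Your route---subtract $h$, then close with the barrier bound $|c_0|R^2\lesssim\der_{\B_{R/2}}^j(v)$---is a legitimate alternative for $j\in\{1,2\}$; for $j=3$ your claim ``immediate'' is slightly too fast, since $\tilde v-p^*$ with $p^*\in\fct_2$ need not be $L$-harmonic, and you still need the observation that $D(\tilde v-p^*)$ is harmonic to run the interior estimate.

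For \eqref{eq:221231-2} there is a genuine gap: you have misread the content of the inequality. The term $\der_{2R/3}^j(v)$ on the right is over the \emph{discrete} ball $B_{2R/3}$ (recall the convention $\der_R^j:=\der_{B_R}^j$), and the purpose of \eqref{eq:221231-2} is precisely to pass from the continuous quantity $\der_{\B_{\theta R}}^j(v)$ to a discrete one, at the cost of the error term $\tfrac{\theta^j}{R}(\sup_{\partial\B_{2R/3}}|v|+R^2|c_0|)$. The paper accomplishes this by writing each $x\in\B_{R/2}$ as a convex combination of nearby lattice points $y_i\in\bar B_{R/2}$, comparing $|v(x)-p(x)|$ to $\max_{\bar B_{R/2}}|v-p|$ plus a Lipschitz error $[v]_{1}\lesssim R^{-1}(\sup_{\partial\B_{2R/3}}|v|+R^2|c_0|)$ and a Hessian-of-$p$ error; this is where the factor $1/R$ originates. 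Your boundary-value comparison is entirely continuous and never touches lattice points; the bound it produces, $\theta^j\sup_{\partial\B_{2R/3}}|v-p|+|c_0|R^2$, is too weak on both counts: the supremum is over the continuous sphere (not controlled by the discrete $\der_{2R/3}^j$), and the $|c_0|R^2$ term lacks the required $\theta^j/R$ factor. Interpreting $\theta^j\der_{2R/3}^j(v)$ as an ``oscillation refinement'' is not what is happening here.
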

\begin{remark}
Property \eqref{eq:221231-1} for deterministic harmonic functions ($c_0=0$) was used in \cite[Lemma 3.3]{AL-17} to obtain regularities of the heterogeneous solution in the PDE setting.  Comparing to \cite[Corollary 3.4]{AL-17}, here by allowing $c_0\neq 0$ we will gain a tiny  improvement for the coefficient of $\norm{\psi+f(0)}_\infty$ in the $C^{0,1}$ estimate by an $R^{-\alpha}$ factor (cf. Theorem~\ref{thm:c11}).  Note that in the discrete setting, we will need \eqref{eq:221231-2} as well because of discretization.  It would be also clear later in Section~\ref{sec:mixing} that the $\log R$ factor in the bound of $A_2$ will help us achieve the $\log R$ factor in Theorem~\ref{thm:ave-rho-quant}.
\end{remark}


\begin{proof}
[Proof of Lemma~\ref{lem:before-induction}]
By the H\"older estimate of Krylov-Safonov, there exists $\gamma=\gamma(d,\kappa)>0$ such that, for $r\in (0,R)$, 
\begin{equation}
\label{eq:osc}
\osc_{B_r}u
\lesssim
\left(\tfrac{r}{R}\right)^\gamma 
(\osc_{B_R}u+R^2\norm{\psi+f}_{d;B_R}).
\end{equation}
Note that this allows us to extend $u$ to be a function $\tilde{u}\in C^\gamma(\R^d)$ with $[\tilde{u}]_{\gamma;\R^d}= [u]_{\gamma;\bar B_{2R/3}}$. Indeed,  define the function $\tilde{u}$ as
\[
\tilde{u}(x)=\min_{y\in \bar B_{2R/3}}\left\{u(y)+|x-y|^\sigma[u]_{\sigma;\bar B_{2R/3}}\right\}.
\]
It is straightforward to check that $\tilde{u}=u$ in $\bar B_{2R/3}$ and $[\tilde{u}]_{\gamma;\R^d}\le [u]_{\gamma;\bar B_{2R/3}}$.
By  \eqref{eq:osc}, 
\begin{equation}\label{eq:holder-tilde}
[\tilde{u}]_{\gamma;\R^d}
=[u]_{\gamma;\bar B_{2R/3}}
\lesssim 
R^{-\gamma}(\max_{B_R}|u|+R^2\norm{\psi+f}_{d;B_R}).
\end{equation}

Let $\bar v:\bar\B_{2/3}\to\R$ be the solution of
\[
\left\{
\begin{array}{lr}
\frac{1}{2}\tr(\bar a D^2\bar v)=R^2\bar\psi &\text{ in }\B_{2/3}\\
\bar v(x)=\tilde u(Rx) &\text{ for }x\in\partial\B_{2/3}.
\end{array}
\right.
\]
First, write $A:=R^{2-\gamma\beta}\norm{\psi}_\infty+R^2\norm{f}_{d;B_R}$.  We will show that, for $R\ge \tx$,
\begin{equation}
\label{eq:quant-osc}
\max_{x\in B_{2R/3}}
|u(x)-\bar v(\tfrac{x}{R})|
\lesssim
R^{-\gamma\beta}\max_{B_R}|u|+A.
\end{equation}
To this end, let $u_1:\bar B_{2R/3}\to\R$  be the solution of 
\[
\left\{
\begin{array}{lr}
L_\omega u_1=\psi(\theta_x\omega) & \text{ in }B_{2R/3}\\
u_1(x)=\tilde u(\tfrac{2Rx}{3|x|}) & x\in\partial B_{2R/3}.
\end{array}
\right.
\]
By Proposition~\ref{thm:quant-homo} and \eqref{eq:holder-tilde}, when $R\ge \tx$,  noting that $[\tilde{u}(R\cdot )]_{\gamma;\R^d}\le R^{\gamma}[\tilde{u}(\cdot )]_{\gamma;\R^d}$, 
\begin{align*}
\max_{x\in B_{2R/3}}|u_1(x)-\bar v(\tfrac{x}{R})|
&\lesssim 
R^{-\gamma\beta}([\tilde{u}(R\cdot )]_{\gamma;\R^d}+R^2\norm{\psi}_\infty)\\
&\lesssim
R^{-\gamma\beta}(\max_{B_R}|u|+R^2\norm{\psi}_\infty+R^2\norm{f}_{d;B_R}).
\end{align*}
Moreover, by the ABP maximum principle,
\begin{align*}
\max_{B_{2R/3}}|u-u_1|
&\le \max_{x\in \partial B_{2R/3}}|u(x)-\tilde u(\tfrac{2Rx}{3|x|})|
+CR^2\norm{f}_{d;B_R}\\
&\lesssim
[\tilde{u}]_{\gamma;\R^d}+R^2\norm{f}_{d;B_R}\\
&\stackrel{\eqref{eq:holder-tilde}}{\lesssim}
R^{-\gamma}(\max_{B_R}|u|+R^2\norm{\psi}_{\infty})+R^2\norm{f}_{d;B_R}.
\end{align*}
Combining the two inequalities above, 
display \eqref{eq:quant-osc} is proved.

By the triangle inequality and Proposition~\ref{prop:harmonic}, for $1\le j\le 3$,
\begin{align}\label{eq:221201-1}
\der{\theta R}^{j}(u)
&\le \max_{B_{R/2}}|u-\bar v(\tfrac{\cdot}{R})|+\der{\theta R}^{j}(\bar v(\tfrac{\cdot}{R}))\nn\\
&\le 
 \max_{B_{R/2}}|u-\bar v(\tfrac{\cdot}{R})|+\tfrac{C\theta^j}{R}(\sup_{\partial\B_{2/3}}|\bar v|+R^2|\bar\psi|)+C\theta^j\der{2R/3}^j(\bar v(\tfrac{\cdot}{R}))\nn\\
 &\lesssim
  \max_{B_{2R/3}}|u-\bar v(\tfrac{\cdot}{R})|
  +\tfrac{\theta^j}{R}(\sup_{\partial\B_{2/3}}|\bar v|+R^2|\bar\psi|)+\theta^j\der{2R/3}^j( u).
 \end{align}
Since
$\sup_{\partial\B_{2/3}}|\bar v|=\sup_{\partial\B_{2R/3}}|\tilde{u}|\le \max_{B_{2R/3}}|u|+[\tilde{u}]_{\gamma;\bar{B}_{2R/3}}\le \max_{B_R}|u|+A$, 
by \eqref{eq:quant-osc} and \eqref{eq:221201-1}, we have, for $1\le j\le 3$,
\begin{align*}
\der{\theta R}^{j}(u)
\lesssim
R^{-\gamma\beta}\max_{B_{R}}|u|+A+\theta^j\der{R}^j(u).
\end{align*}
 
Finally, note that since every $p\in\fct_1$ is $\omega$-harmonic,  $(u-p)$ still solves
$L_\omega (u-p)=\psi(\theta_x\omega)+f(x)$ for $x\in B_R$.
Therefore, substituting $u$ by $(u-p)$ in the above inequality and optimizing over $p\in\fct_{1}$,
the lemma follows.
\end{proof}

\begin{proof}[Proof of Theorem~\ref{thm:c11}]
By Lemma~\ref{lem:before-induction} and Lemma~\ref{lem:deterministic},  there exists $\theta=\theta(d,\kappa)\in(0,\tfrac13)$ such that \eqref{eq:oscillation-estimates}  holds with
the terms $A_j$, $j\in\{1,2\}$ satisfying
\[
A_j=\sum_{k\ge 1: r_k\le R}r_k^{2-\alpha-j}\norm{\psi}_\infty+r_k^{2-j}\norm{f}_{d;B_{r_k}}.
\]
Note that $\norm{f-f(0)}_{d;B_r}\lesssim r^\sigma[f]_{\sigma;B_r}$ for all $\sigma\in(0,1]$. 
The bounds of $A_1,A_2$ in the theorem follow immediately.

To prove \eqref{eq:c2}, note that $|\nabla u(0)|\le\osc_{\bar B_1}u$ and 
 that for any $\ell\in\fct_1$, 
$
\abs{\nabla^2u(0)}
=\abs{\nabla^2(u-\ell)}
\lesssim
\osc_{\bar B_{\sqrt 2}}(u-\ell)$. Hence, by \eqref{eq:oscillation-estimates}, we get
\[
|\nabla^ju(0)|\lesssim(\frac{\tx}{R})^j 
\left(
\osc_{B_{R/2}}u+R^2\norm{\psi+f(0)}_\infty+R^{2+\sigma}[f]_{\sigma;B_{R/2}}
\right).
\]
By the Harnack inequality and the ABP inequality, we have
\begin{equation}\label{eq:200524-2}
\osc_{B_{R/2}}u\lesssim
\norm{u-u(0)}_{1;B_R}+R^2\norm{\psi+f}_{d;B_R}.
\end{equation}
Display \eqref{eq:c2} follows by using again $\norm{f-f(0)}_{d;B_R}\lesssim R^\sigma[f]_{\sigma;B_R}$ for $\sigma\in(0,1]$.
\end{proof}

\section{Mixing properties of the invariant measure}\label{sec:mixing}
The goal of this section is to investigate the mixing properties of the field $\{\rho_\omega(x):x\in\Z^d\}$ of the invariant measure. 
We will obtain a rate of convergence (Theorem \ref{thm:ave-rho-quant}) of the average of the invariant measure over balls $B_R$. We will also quantify the correlation of the field (Proposition~\ref{prop:correlation}).

The Efron-Stein inequality \eqref{eq:bblm} of Boucheron, Bousquet, and Massart \cite{BBLM-05} will be used in our derivation of quantitative estimates.

Let $\omega'(x), x\in\Z^d$, be i.i.d.  copies of $\omega(x), x\in\Z^d$. For any $y\in\Z^d$, let $\omega'_y\in\Omega$ be the environment such that
\[
\omega'_y(x)=
\left\{
\begin{array}{lr}
&\omega(x) \quad \text{ if }x\neq y,\\
&\omega'(y) \quad \text{ if }x=y.
\end{array}
\right.
\]
That is, $\omega'_y$ is a modification of $\omega$ only at location $y$. 
For any measurable function $Z$ of the environment $\omega$, we write, for $y\in\Z^d$,
\begin{equation}\label{eq:def_vert_der}
Z_y'=Z(\omega_y'), \quad
\vd{y} Z(\omega)=Z'_y-Z, 
\end{equation}
and set
\begin{equation}\label{def:v}
V(Z)=\sum_{y\in\Z^d}(\vd{y}Z)^2. 
\end{equation}
With abuse of notations, we enlarge the probability space and still use $\mb P$ to denote the distribution of both $\omega, \omega'$.

The $L_p$ version of Efron-Stein inequality in \cite[Theorem~3]{BBLM-05} states that
\begin{equation}\label{eq:bblm}
\mb E[|Z-\mb EZ|^q]\le Cq^{q/2}\mb E[V^{q/2}] \quad \text{ for }q\ge 2.
\end{equation}
The following variation of \eqref{eq:bblm} will be useful in our paper.
\begin{lemma}
\label{lem:efron-stein} Let $p>0$. 
Let $Z$ be a measurable function of the environment.  Assume that there exist  $f\in(0,\infty)^{\Z^d}$ with $F=[\sum_{y\in \Z^d}f(y)^2]^{1/2}<\infty$ and a random variable $\ms X>0$ with $\mb E[\exp(c\ms X^p)]<\infty$  such that 
$\mb E[\abs{\vd y Z/f(y)}^n)]\lesssim \mb E[\ms X^n]$ for all $y\in\Z^d$, $n\ge 1$.  Then there exists $C=C(p)>0$ such that
\[
\mb E\left[\exp\big(
C|F^{-1}(Z-\mb EZ)|^{2p/(2+p)}
\big)\right]
\lesssim 
E[\exp(c\ms X^p)].
\]
\end{lemma}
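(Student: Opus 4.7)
The plan is to combine the $L^q$ Efron--Stein inequality \eqref{eq:bblm} with tail integration of $\ms X$, and then convert the resulting polynomial moment growth in $q$ into the stretched-exponential moment stated in the conclusion.

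First I would bound $\mb E[V(Z)^{q/2}]$ by a weighted Jensen estimate. Writing
\[
V(Z)/F^2 = \sum_{y \in \Z^d} \frac{f(y)^2}{F^2}\Bigl(\frac{\vd{y} Z}{f(y)}\Bigr)^2,
\]
the weights $f(y)^2/F^2$ form a probability distribution on $\Z^d$, so Jensen's inequality applied to $x \mapsto x^{q/2}$ (convex for $q\ge 2$) gives
\[
(V(Z)/F^2)^{q/2} \le \sum_{y \in \Z^d} \frac{f(y)^2}{F^2}\Bigl|\frac{\vd{y} Z}{f(y)}\Bigr|^q.
\]
Taking expectations and invoking the hypothesis $\mb E[|\vd{y} Z/f(y)|^q] \lesssim \mb E[\ms X^q]$ gives $\mb E[V(Z)^{q/2}] \lesssim F^q\, \mb E[\ms X^q]$, and substituting into \eqref{eq:bblm},
\[
\mb E\bigl[|F^{-1}(Z - \mb E Z)|^q\bigr] \lesssim q^{q/2}\, \mb E[\ms X^q] \qquad (q \ge 2).
\]

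Next I would convert the stretched-exponential moment of $\ms X$ into polynomial moment growth in $q$. Markov's inequality applied to $\exp(c \ms X^p)$ gives $\mb P(\ms X > t) \lesssim e^{-c t^p}$, and integrating via $\mb E[\ms X^q] = \int_0^\infty q t^{q-1}\mb P(\ms X > t)\,\dd t$ together with Stirling's bound on the Gamma function yields $\mb E[\ms X^q] \le C_p^q\, q^{q/p}$ for all $q \ge 1$. Combined with the previous display,
\[
\mb E\bigl[|F^{-1}(Z - \mb E Z)|^q\bigr] \le (C_p\, q^\alpha)^q, \qquad \alpha := \tfrac12 + \tfrac1p = \tfrac{p+2}{2p}.
\]

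Finally I would convert polynomial moment growth into a stretched-exponential tail. By Markov's inequality, $\mb P(|F^{-1}(Z - \mb E Z)| \ge t) \le (C_p q^\alpha / t)^q$, and optimizing over $q$ (choosing $q \asymp (t/(eC_p))^{1/\alpha}$) produces the tail bound
\[
\mb P\bigl(|F^{-1}(Z - \mb E Z)| \ge t\bigr) \lesssim \exp\bigl(-c\, t^{1/\alpha}\bigr) = \exp\bigl(-c\, t^{2p/(p+2)}\bigr).
\]
Integrating against $\exp(C\, t^{2p/(p+2)})$ for any $C$ sufficiently small relative to $c$ then yields the claim. This is a clean adaptation of the standard route from the $L^q$ Efron--Stein inequality to stretched-exponential concentration; the only point requiring care is identifying the exponent $2p/(p+2)$ as the natural interpolation between the $q^{q/2}$ Efron--Stein factor and the $q^{q/p}$ factor from the tail of $\ms X$, and tracking how the constant $\mb E[\exp(c \ms X^p)]$ propagates through $C_p$ at each step.
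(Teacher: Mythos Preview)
Your proof is correct. The first half---applying Jensen with weights $f(y)^2/F^2$ and then the $L^q$ Efron--Stein inequality \eqref{eq:bblm} to get $\mb E[|F^{-1}(Z-\mb EZ)|^q]\lesssim q^{q/2}\mb E[\ms X^q]$---is exactly what the paper does. The difference is in the second half. The paper converts this moment bound to the stretched-exponential conclusion by a direct series comparison: it invokes the elementary inequality \eqref{eq:fact-analytical}, namely $\sum_{n\ge 1}\frac{c^n}{n!}x^n n^{\alpha n}\le \exp(x^{1/(1-\alpha)})$, applied with $x$ a suitable power of $\ms X$ and $\alpha$ chosen so that $1/(1-\alpha)=p/r$ with $r=2p/(p+2)$; taking expectations and matching the series for $\mb E[e^{CW^r}]$ against the right-hand side gives the result in one line. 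Your route---Markov for $\ms X$ to get $\mb E[\ms X^q]\le M C_p^q q^{q/p}$, then Markov and optimisation over $q$ for $W$, then tail integration---is the more classical ``moment growth $\Rightarrow$ tail $\Rightarrow$ exponential moment'' argument. It is slightly longer but has the advantage that it never needs to evaluate $\mb E[W^q]$ or $\mb E[\ms X^q]$ at non-integer $q$, which the series approach implicitly does (and which requires a word about interpolating the hypothesis to real exponents). Both methods track the factor $M=\mb E[\exp(c\ms X^p)]$ linearly, as required by the conclusion.
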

The proof uses the fact that for $\alpha\in[0,1)$, there exists $c=c(\alpha)>0$ such that
\begin{equation}
\label{eq:fact-analytical}
\sum_{n=1}^\infty
\frac{c^n}{n!}x^{n}n^{\alpha n}\le \exp(x^{1/(1-\alpha)}) \quad\text{ for all }x>0.
\end{equation}
Indeed, when $x>0$, putting $c=e^{-\alpha}/2$ and using inequality $\tfrac{n^n}{n!}\le e^n$, 
\begin{align*}
\sum_{n=1}^\infty
\frac{c^n}{n!}x^{n}n^{\alpha n}
\le 
\sum_{n=1}^\infty 2^{-n}\frac{x^n}{(n!)^{1-\alpha}}
=\sum_{n=1}^\infty 2^{-n}\big(\frac{x^{n/(1-\alpha)}}{n!}\big)^{1-\alpha}
\le 
\exp(x^{1/(1-\alpha)}),
\end{align*}
where we used $\tfrac{y^n}{n!}\le e^y$ for $y\ge 0$ in the last inequality.  
\begin{proof}
Set $\ms X_y:=\abs{\vd y Z/f(y)}$.
By Jensen's inequality,  for any $q\ge 2$,
\[
V^{q/2}
=F^q\big(\sum_y\ms X_y^2\frac{f(y)^2}{F^2}\big)^{q/2}
\le 
F^{q}\sum_y\ms X_y^q\frac{f(y)^2}{F^2}.
\]
Taking expectations on both sides, and using \eqref{eq:bblm}, we get, for $q\ge 2$,
\[
\mb E\left[
|(Z-\mb EZ)/F|^{q}
\right]
\lesssim 
q^{q/2}\mb E[
\ms X^q
].
\]
The lemma then follows by using the fact \eqref{eq:fact-analytical}.
\end{proof}

The following facts concerning the operation $\vd y$ will be useful for later computations.
For any measurable functions $f,g$ on $\Omega$, 
\begin{align}\label{eq:inte-by-part}
&\mb E[(\vd{y}f)g]=\mb E[f(\vd{y}g)],\\
& \vd{y}(fg)=(\vd{y}f)g+f_y'(\vd{y}g)=(\vd{y}f)g_y'+f(\vd{y}g).\label{eq:prod-rule}
\end{align}
Recall that, as defined in \eqref{eq:def-a},  $a(y)=a_\omega(y)$ denotes the diagonal matrix
\[
a(y)=\frac{\omega(y)}{\tr\omega(y)}=\diag[2\omega(y,y+e_1),\ldots, 2\omega(y,y+e_d)].
\]

It follows  from the product rule \eqref{eq:prod-rule} that for any $u=u_\omega$ that solves $L_\omega u(x)=\xi(x,\omega)$ for some function $\xi:\Z^d\times\Omega\to\R$,  the vertical derivative $\vd y u(x)$ satisfies
\begin{equation}\label{eq:vdlaplace}
L_{\omega}(\vd y u)(x)=-\tfrac12\tr(\vd ya(y)\nabla^2 u'_y(y))\mathbbm1_{x=y}+\vd y\xi(x,\omega),
\end{equation}
where  we used the fact that $\vd y\omega(x)=0$ for $x\neq y$,  and $u_y'(x):=u_{\omega'_y}(x)$. 

\subsection{A sensitivity estimate of the invariant measure}
The main contribution of this subsection is a formula for the ``vertical derivative" of the invariant measure $\rho$.

\begin{definition}
For $t>0$, we let $V(t,\omega)=\sum_{x\in\Z^d}p_t^\omega(x,0)$. Let $\Q_t$ be the probability measure on $\Omega$ defined by 
\[
\Q_t(\dd\omega)=V(t,\omega)\mb P(\dd\omega).
\]
\end{definition}

We remark that by Theorem~\ref{thm:hk-bounds},  
\begin{equation}
\label{eq:V-bound}
V(t,\omega)\lesssim\tx^{d-1} \quad \text{ for $\mb P$-a.e.  }\omega
\end{equation}
and so $\Q_t$ is well-defined.  Note that for any bounded measurable function $\zeta$ on $\Omega$,  we have $E_{\Q_t}[\zeta]=\mb E[P_t\zeta]$. In other words,  $\Q_t$ is the distribution of the environment viewed from the particle at time $t$.  It is natural to expect that $\Q_t\to\Q$ as $t\to\infty$.

For any function $u$ of the environment,  we denote by $u_\omega$ the corresponding function on $\Z^d$ defined by $u_\omega(x):=u(\theta_x\omega)$.

\begin{lemma}
\label{lem:weak-conv}
As $t\to \infty$, $\Q_t$ converges weakly to $\Q$.
\end{lemma}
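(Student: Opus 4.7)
The plan relies on the identity $E_{\Q_t}[\zeta]=\mb E[P_t\zeta]$ for any bounded measurable $\zeta$, which holds by a change of variables using the shift-invariance of $\mb P$ and the definition of $P_t$. Weak convergence $\Q_t\to\Q$ thus reduces to showing $\mb E[P_t\zeta]\to E_\Q[\zeta]$ for $\zeta$ in a determining class of bounded local functions.

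First I would establish tightness. By Theorem~\ref{thm:hk-bounds}, $V(t,\omega)\le C\tx^{d-1}$ uniformly in $t$, and $\tx$ has stretched-exponential moments under $\mb P$; hence $\{V(t,\cdot)\}_{t\ge 0}$ is bounded in every $L^p(\mb P)$, uniformly in $t$. This uniform integrability yields tightness of $\{\Q_t\}$ on $\Omega$ and forces $\mu\ll\mb P$ for any weak subsequential limit $\mu$.

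Next I would identify the limit via two complementary routes. The Ces\`aro route: applying Birkhoff's theorem to the $\Q$-ergodic environment process $(\theta_{Y_s}\omega)$ and integrating against $P_\omega$ and then $\mb P$ (valid by bounded convergence since $\Q\sim\mb P$) gives
\[
\frac{1}{T}\int_0^T E_{\Q_s}[\zeta]\,\dd s\;\longrightarrow\;E_\Q[\zeta],
\]
i.e., $\bar{\Q}_T:=T^{-1}\int_0^T\Q_s\,\dd s\to\Q$ weakly. Combined with the semigroup identity $\Q_{t+s}=\Q_tP_s$ and the weak continuity of $P_s$ acting on bounded functions, one concludes that every weak subsequential limit $\mu$ of $\{\Q_t\}$ is $P_s$-invariant, hence equals $\Q$ by uniqueness of the $\mb P$-absolutely continuous invariant measure (Theorem~\ref{thm:QCLT}). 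A more direct route, available when $d\ge 3$: Theorem~\ref{thm:var_decay} gives $\|P_t\zeta-E_\Q[\zeta]\|_{L^2(\Q)}\to 0$ for local $\zeta$; since $\rho^{-1}\in L^2(\Q)$ (by $\rho\ge c\tx^{-s}$ together with the moment bounds on $\tx$), Cauchy--Schwarz applied to
\[
\mb E[P_t\zeta]-E_\Q[\zeta]=E_\Q\bigl[\rho^{-1}(P_t\zeta-E_\Q[\zeta])\bigr]
\]
yields the claim, in fact with a quantitative rate.

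The main obstacle lies in the Ces\`aro-to-pointwise passage for $d=2$, where Theorem~\ref{thm:var_decay} is not directly available and one must instead argue by a purely ergodic-theoretic route: extract any weak subsequential limit $\mu\ll\mb P$, verify $P_s$-invariance through the semigroup identity, and invoke uniqueness of the $\mb P$-absolutely continuous invariant measure to conclude $\mu=\Q$. Combined with tightness this gives the full convergence $\Q_t\to\Q$ without any quantitative rate.
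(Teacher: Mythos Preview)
Your route for $d\ge 3$ via Theorem~\ref{thm:var_decay} is sound: the identity $\mb E[P_t\zeta]-E_\Q[\zeta]=E_\Q[\rho^{-1}(P_t\zeta-E_\Q[\zeta])]$ together with $\rho^{-1}\in L^2(\Q)$ (from Theorem~\ref{thm:hk-bounds}) and Cauchy--Schwarz does the job, at least for $\zeta=\zeta(\omega(0))$; extending to general local $\zeta$ is routine but should be mentioned.

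The gap is in your $d=2$ argument, and more generally in the ``Ces\`aro route''. From $\Q_{t_k}\to\mu$ weakly and the Feller property you only get $\Q_{t_k+s}=\Q_{t_k}P_s\to\mu P_s$. This says $\mu P_s$ is \emph{also} a subsequential limit; it does \emph{not} say $\mu P_s=\mu$. Ces\`aro convergence $\bar\Q_T\to\Q$ does not upgrade to pointwise convergence of $\Q_t$ (think of a bounded sequence oscillating between two values), and nothing you have written rules out several distinct subsequential limits permuted by $P_s$. To conclude $\mu P_s=\mu$ you would need $\Q_{t+s}[\zeta]-\Q_t[\zeta]\to 0$, i.e.\ that $t\mapsto\mb E[P_t\zeta]$ is Cauchy, which is exactly what remains to be proved.

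The paper closes this gap by a different, PDE-based argument that works for all $d\ge 2$: it computes $E_{\Q_t}[L_\omega f_\omega(0)]=E_\Q[f\,L_{\omega^*}\tilde V_\omega(t,0)]$ via the adjoint walk $\omega^*$, where $\tilde V_\omega(t,x)=V(t,\theta_x\omega)/\rho_\omega(x)$ solves an $\omega^*$-caloric equation. Parabolic H\"older regularity and Harnack (Corollary~\ref{acor:Kry-Saf}, Theorem~\ref{thm:harnack-para}) then give $|L_{\omega^*}\tilde V_\omega(t,0)|\lesssim t^{-\gamma}\rho^{-1}\tx^{d-1}$, whence $|E_{\Q_t}[L_\omega f_\omega(0)]|\lesssim t^{-\gamma}\norm{f}_\infty$. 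This yields infinitesimal invariance of every subsequential limit $\Q_\infty$, and since the density bound $V(t,\cdot)\lesssim\tx^{d-1}$ forces $\Q_\infty\ll\mb P$, uniqueness of the absolutely continuous invariant measure gives $\Q_\infty=\Q$. That quantitative decay of $E_{\Q_t}[L_\omega f]$ is the missing ingredient in your proposal.
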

\begin{proof}
Since $\{\Q_t\}$ is a sequence of probability measures on the compact space $\Omega$, it has a weak convergent subsequence $\{\Q_{t_k}\}$ which has a weak limit $\Q_\infty$.

To prove that $\Q_\infty$ is an invariant measure for the Markov chain $(\theta_{Y_t}\omega)$, it suffices to show that for any bounded measurable function $f$ on $\Omega$, 
\[
E_{\Q_\infty}[L_\omega f_\omega(0)]=0.
\]
Indeed, by the translation invariance of the measure $\mb P$, for any $e$ with $|e|=1$,
\begin{align}\label{eq:221219-1}
\mb E[\omega(0,e)V(t,\omega)f(\theta_e\omega)]
&=
\mb E[
\omega(-e,0)V(t,\theta_{-e}\omega)f(\omega)]\nn
\\&
=\mb E[\rho_\omega(0)\omega^*(0,-e)\tilde{V}(t,\theta_{-e}\omega)f(\omega)],
\end{align}
where $\omega^*(x,y):=\rho_\omega(y)\omega(y,x)/\rho_\omega(x)$ denotes the {\it adjoint} of $\omega$,  cf.  e.g., \cite{DG-19}, and $\tilde{V}(t,\omega):=V(t,\omega)/\rho(\omega)$.  Noting that 
$\sum_{y}\omega^*(x,y)=1$,  we have
\begin{align}\label{eq:QLf}
E_{\Q_t}[L_\omega f_\omega(0)]
&=\mb E[V(t,\omega)\sum_e \omega(0,e)[f(\theta_e\omega)-f(\omega)]]\nn\\
&\stackrel{\eqref{eq:221219-1}}{=}
\mb E[\rho_\omega(0)\sum_e\omega^*(0,e)[\tilde{V}(t,\theta_e\omega)-\tilde{V}(t,\omega)]f(\omega)]\nn\\
&=E_\Q[f(\omega)L_{\omega^*}\tilde{V}_\omega(t,0)],
\end{align}
where $\tilde{V}_\omega(t,x):=\tilde{V}(t,\theta_x\omega)$,  and $L_{\omega^*}$ only acts on the spatial ($\Z^d$) coordinate of the function $\tilde{V}_\omega:\R\times\Z^d\to\R$ of space and time.
Observe that $\tilde{V}_\omega$ solves the parabolic equation
\[
(\partial_t-L_{\omega^*})\tilde{V}_\omega=0 \quad
\text{ in }(0,\infty)\times\Z^d.
\]
By the H\"older estimate \cite[Corollary 7]{DG-19} and the Harnack inequality \cite[Theorem 6]{DG-19} for the operator $(\partial_t-L_{\omega^*})$,   there exists $\gamma=\gamma(d,\kappa)>0$ such that, for $t>1$,
\begin{align}\label{eq:osc-v-tilde}
\max_{e:|e|=1}|\tilde{V}_\omega(t,e)-\tilde{V}_\omega(t,0)|
&\lesssim
t^{-\gamma}\sup_{(s,x)\in (0.5t,t)\times B_{\sqrt t}}\tilde{V}_\omega(s,x)\nn\\
&\lesssim
t^{-\gamma}\tilde{V}_\omega(2t,0)\\
&\stackrel{\eqref{eq:V-bound}}
\lesssim t^{-\gamma}\rho^{-1}\tx^{d-1}.\nn
\end{align}
Thus, by \eqref{eq:QLf}, $\abs{\E_{\Q_t}[L_\omega f_\omega(0)]}
\lesssim  t^{-\gamma}\mb E[\tx^{d-1}]\norm{f}_\infty\lesssim  t^{-\gamma}\norm{f}_\infty$ .
In particular, for any bounded measurable function $f$ on $\Omega$,
\[
E_{\Q_\infty}[L_\omega f_\omega(0)]
=\lim_{k\to\infty}
E_{\Q_{t_k}}[L_\omega f_\omega(0)]=0
\]
which implies that $\Q_\infty$ is an invariant measure for the Markov chain $(\theta_{Y_t}\omega)$.  Moreover,  for any bounded measurable function $f:\Omega\to\R$ and $p>0$, 
\[
E_{\Q_t}[f]=\mb E[V_\omega(t,0)f(\omega)]\lesssim\mb E[\tx^{d-1}f]
\lesssim_p\norm{f}_{L^p(\mb P)},
\]
and so $E_{\Q_\infty}[f]\lesssim_p\norm{f}_{L^p(\mb P)}$ which implies $\Q\ll\mb P$. Therefore, by the same argument as in \cite[(4)]{GZ-12},  we have $\Q_\infty=\Q$.
\end{proof}

Before stating the formula for $\vd{y}\rho$ in the following proposition, 
we remark that although the global Green function $G^\omega(x,y)$ is only defined for $d\ge 3$, the second order difference $\nabla_{i;1}^2 G(x,y)$ can be defined for all dimensions, where  $\nabla_{i;1}^2$ is $\nabla_{i}^2$ applied to the first $\Z^d$ coordinate. That is, for any fixed $y$, $\nabla_{i;1}^2G(\cdot, y):=\nabla_{i}^2G(\cdot, y)$. Indeed,  recalling $A(x,y)$ in \eqref{eq:def-potential}, we can set
\[
\nabla_{i;1}^2  G(x,y):=-\nabla_{i;1}^2  A(x,y) \qquad\text{ when }d=2.
\]
Since $G(\cdot,\cdot)$ is not defined in Definition~\ref{def:green} for $d=2$, for the convenience of notations,  throughout this section we denote
\begin{equation}\label{eq:nabla-d2}
G(x,y):=-A(x,y), \text{ and }G(x,S)=-\sum_{y\in S}A(x,y)\quad \text{ when }d=2.
\end{equation}

\begin{proposition}
\label{prop:sensitivity-rho}
Recall the vertical derivative $\vd y$ and the notation $\omega'_y$ as in \eqref{eq:def_vert_der}. 
For any $x,y\in\Z^d$, $\mb P$-almost surely,
\[
\vd{y}\rho_\omega(x)=\rho_\omega(y)\sum_{i=1}^d(\vd{y}\omega)(y,y+e_i)\nabla_{i;1}^2 G^{\omega_y'}(y,x).
\]
\end{proposition}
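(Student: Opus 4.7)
The plan is to exploit that $\rho_\omega$ satisfies the pointwise adjoint equation $L_\omega^* \rho_\omega(x) = 0$ for every $x \in \Z^d$, where $L_\omega^* v(x) := \sum_z \omega(z,x)\, v(z) - v(x)$ is the counting-measure adjoint of $L_\omega$. (This is the invariant-measure equation: it follows pointwise at $x=0$ by unfolding $E_\Q[\mc L f] = 0$ with the shift invariance of $\mb P$, and at arbitrary $x$ by translation invariance.) Starting from this, I would derive an equation for $\vd{y}\rho_\omega = \rho_{\omega'_y} - \rho_\omega$, invert it with the Green function of $L_{\omega'_y}^*$, and then simplify via the balanced structure at $y$.

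Since $\omega'_y$ and $\omega$ agree outside of $y$, a direct computation gives $L_{\omega'_y}^* v(x) - L_\omega^* v(x) = (\vd{y}\omega)(y,x)\, v(y)$ for every $v$. Applying this to $v = \rho_\omega$ and subtracting $L_{\omega'_y}^* \rho_{\omega'_y} = 0$ yields
\[
L_{\omega'_y}^* \vd{y}\rho_\omega(x) = -(\vd{y}\omega)(y,x)\, \rho_\omega(y),
\]
whose right-hand side is supported on $\{y \pm e_i : 1 \le i \le d\}$.

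Next I would verify that the candidate $u(x) := \rho_\omega(y)\sum_i (\vd{y}\omega)(y, y+e_i)\, \nabla_{i;1}^2 G^{\omega'_y}(y, x)$ solves the same equation. The key identity $L_{\omega'_y}^{*(x)} \nabla_{i;1}^2 G^{\omega'_y}(y, x) = -[\delta_{y+e_i, x} + \delta_{y-e_i, x} - 2\delta_{y,x}]$ holds in $d \ge 3$ because $L_{\omega'_y}^{*(x)} G^{\omega'_y}(z, x) = -\delta_{z,x}$ by the forward Kolmogorov equation, and in $d = 2$ by the representation $\nabla_{i;1}^2 G^{\omega'_y}(y, x) = \int_0^\infty \nabla_{i;1}^2 p_t^{\omega'_y}(y, x)\,\dd t$ (absolutely convergent despite recurrence thanks to the cancellation in $\nabla_{i;1}^2$) combined with the same forward-equation argument. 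Plugging in and collapsing via (i) $(\vd{y}\omega)(y, y+e_i) = (\vd{y}\omega)(y, y-e_i)$ (from $\omega(y, y\pm e_i)$ being equal by the balanced condition, and likewise for $\omega'$) and (ii) $\sum_i (\vd{y}\omega)(y, y+e_i) = \tfrac12 - \tfrac12 = 0$ (which annihilates the coefficient of $\delta_{y,x}$) gives $L_{\omega'_y}^{*(x)} u(x) = -(\vd{y}\omega)(y,x)\rho_\omega(y)$, matching the equation above.

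The main obstacle is uniqueness: the difference $u - \vd{y}\rho_\omega$ is $L_{\omega'_y}^*$-harmonic and therefore proportional to $\rho_{\omega'_y}$ by ergodicity, so one must show this constant vanishes. To close this, my plan is to give an independent derivation of the $x=0$ case via the heat kernel: a Duhamel formula applied to $V(t,\omega) = \sum_z p_t^\omega(z,0)$, simplified by (i) and (ii) to recast the source as a second difference, yields
\[
\vd{y} V(t,\omega) = \int_0^t V(t-s, \theta_y\omega) \sum_i (\vd{y}\omega)(y, y+e_i)\, \nabla_{i;1}^2 p_s^{\omega'_y}(y, 0)\,\dd s.
\]
Passing $t \to \infty$ using Lemma~\ref{lem:weak-conv} (so $V(t-s, \theta_y\omega) \to \rho_\omega(y)$ and $V(t,\omega) \to \rho(\omega)$) together with the heat-kernel bounds of Theorem~\ref{thm:hk-bounds} (to justify dominated convergence in $s$) identifies the limit with $\rho_\omega(y) \sum_i (\vd{y}\omega)(y, y+e_i) \nabla_{i;1}^2 G^{\omega'_y}(y, 0)$, matching the formula at $x=0$; translation invariance then gives the claim for general $x$. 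The delicate case is $d = 2$, where (ii) is exactly what makes the time integral of $\nabla_{i;1}^2 p_s^{\omega'_y}$ absolutely convergent even though the individual integrals $\int_0^\infty p_s^{\omega'_y}(y\pm e_i, 0)\,\dd s$ diverge.
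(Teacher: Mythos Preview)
Your heat-kernel/Duhamel computation is exactly the paper's argument, and once it is carried out at $x=0$ and then translated, it proves the proposition outright; the adjoint-equation approach you lead with is a correct derivation of the governing equation but becomes redundant, since you never establish uniqueness independently and instead resolve it by redoing the paper's proof in full. So the proposal is essentially the same route as the paper.

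There is, however, one genuine gap in how you describe the limit. Lemma~\ref{lem:weak-conv} only gives \emph{weak} convergence $\Q_t\to\Q$; it does not give the pointwise (a.s.) convergence $V(t,\omega)\to\rho(\omega)$ that you invoke when you write ``so $V(t-s,\theta_y\omega)\to\rho_\omega(y)$ and $V(t,\omega)\to\rho(\omega)$''. The paper circumvents this by working weakly throughout: for an arbitrary bounded test function $f$ one writes
\[
\mb E[(\vd y\rho)f]=\mb E[\rho(\vd y f)]=\lim_{t\to\infty}\mb E[V(t,\omega)(\vd y f)]=\lim_{t\to\infty}\mb E[(\vd y V(t,\omega))f],
\]
using \eqref{eq:inte-by-part} and Lemma~\ref{lem:weak-conv}, and then substitutes the Duhamel expression \eqref{eq:vd-V-formula} for $\vd y V(t,\omega)$, passing the limit inside the $s$-integral by dominated convergence (justified by the uniform bound \eqref{eq:uniform-bd}) and applying weak convergence once more to the factor $V_\omega(t-s,y)$. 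Your sketch has all the right ingredients (Duhamel, the heat-kernel bounds of Theorem~\ref{thm:hk-bounds}, dominated convergence), but the identification of limits must be done under the expectation against a test function, not pointwise.
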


\begin{proof}
It suffices to consider the case $x=0$. The formula for general $x$ will follow from the fact that
$\vd{y}\rho_\omega(x)
=\vd{y-x}\rho_{\theta_x\omega}(0)$.
We divide the proof into several steps.

\medskip

\noindent {\bf Step 1.} First, we will show a formula for $\vd{y}V(t,\omega)$:
\begin{equation}
\label{eq:vd-V-formula}
\vd{y}V(t,\omega)=\int_0^t V_\omega(t-s,y)\sum_{i=1}^d(\vd{y}\omega)(y,y+e_i)\nabla_i^2 p_s^{\omega_y'}(y,0)\dd s,
\end{equation}
where $V_\omega(s,y)=V(s,\theta_y\omega)$, and $V_y'(s,y)=V_{\omega_y'}(s,y)$.

Indeed,  notice that $u(x,t)=p_t^\omega(x,0)$ satisfies 	$u(x,0)=\mathbbm1_{x=0}$ and
\begin{equation}
\label{eq:heat-eq-u}
(\partial_t-L_\omega)u(x,t)=0\quad\text{ for }(x,t)\in\Z^d\times(0,\infty).
\end{equation}
By the equation above and formula  \eqref{eq:vdlaplace},  we have
\[
L_\omega(\vd{y}u)(x,t)
=-\sum_{i=1}^d (\vd{y}\omega)(y,y+e_i)\nabla_i^2 u_y'(y,t)\mathbbm1_{x=y}+\partial_t [\vd{y}u(x,t)].
\]
Hence, for every fixed $y\in\Z^d$,  $\vd{y}u(x,t)$ solves the heat equation
\[
\left\{
\begin{array}{lr}
(\partial_t-L_\omega)\vd{y}u(x,t)=\sum_{i=1}^d (\vd{y}\omega)(y,y+e_i)\nabla_i^2 u_y'(y,t)\mathbbm1_{x=y}
&\text{ for }(x,t)\in \Z^d\times(0,\infty),\\
\vd{y}u(x,0)=0 &\text{ for }x\in\Z^d
\end{array}
\right.
\]
whose solution can be represented by Duhamel's formula
\begin{align*}
\vd{y}u(x,t)
=
\sum_{i=1}^d\int_0^t p_{t-s}^\omega(x,y)
(\vd{y}\omega)(y,y+e_i)\nabla_i^2 u_y'(y,s)\dd s
\end{align*}
Recall that $u(x,t)=p_t^\omega(x,0)$. Summing the above equality over all $x\in\Z^d$,  we obtain
formula \eqref{eq:vd-V-formula}. 

\noindent{\bf Step 2.} We claim that the integrand in \eqref{eq:vd-V-formula} has the following bound: $\forall s\in(0,t)$,
\begin{equation}
\label{eq:uniform-bd}
\Abs{V_\omega(t-s,y)\sum_{i=1}^d(\vd{y}\omega)(y,y+e_i)\nabla_i^2 p_s^{\omega_y'}(y,0)}
\lesssim
(\tx_y\tx_y')^{d-1}(1+s)^{-\gamma-0.5d}.
\end{equation}
Indeed,  by \eqref{eq:heat-eq-u} and applying the Harnack inequality (Corollary~\ref{acor:Kry-Saf}) for the operator $(\partial_t-L_\omega)$ in a similar manner as in \eqref{eq:osc-v-tilde},  we have
\begin{align*}
&\Abs{V_\omega(t-s,y)\sum_{i=1}^d(\vd{y}\omega)(y,y+e_i)\nabla_i^2 p_s^{\omega_y'}(y,0)}\\
&\stackrel{\eqref{eq:V-bound}}\lesssim 
\tx_y^{d-1}
\osc_{\bar B_1(y)}p_s^{\omega_y'}(\cdot,0)
\lesssim 
\tx_y^{d-1}
s^{-\gamma}p_{2s}^{\omega_y'}(y,0)
\end{align*}
for $s>1$.  Hence \eqref{eq:uniform-bd} follows from Theorem~\ref{thm:hk-bounds} when $s>1$.
When $s\le 1$,  \eqref{eq:uniform-bd} is a trivial consequence of \eqref{eq:V-bound} since $\abs{\nabla_i^2 p_s^{\omega_y'}(y,0)}\le 2$.  Display \eqref{eq:uniform-bd} is proved.

\noindent{\bf Step 3.} 
For any bounded measurable function $f$ on $\Omega$, by Lemma~\ref{lem:weak-conv} and \eqref{eq:inte-by-part},
\begin{align*}
\mb E[(\vd{y}\rho)f]=\mb E[\rho(\vd{y}f)]
=\lim_{t\to\infty}\mb E[V(t,\omega)(\vd{y}f)]
=\lim_{t\to\infty}\mb E[(\vd{y}V(t,\omega))f].
\end{align*}
Furthermore,  by \eqref{eq:vd-V-formula}, \eqref{eq:uniform-bd}, and the dominated convergence theorem,  we get
\begin{align*}
\mb E[(\vd{y}\rho)f]
&=
\int_0^\infty 
\lim_{t\to\infty}\mb E\left[
V_\omega(t-s,y)\sum_{i=1}^d(\vd{y}\omega)(y,y+e_i)\nabla_i^2 p_s^{\omega_y'}(y,0)
\mathbbm1_{t>s}f
\right]\dd s\\
&\stackrel{Lemma~\ref{lem:weak-conv}}=
\int_0^\infty 
\mb E\left[
\rho f\sum_{i=1}^d(\vd{y}\omega)(y,y+e_i)\nabla_i^2 p_s^{\omega_y'}(y,0)
\right]\dd s\\
&=
\mb E\left[
\rho f\sum_{i=1}^d(\vd{y}\omega)(y,y+e_i)\nabla_i^2 G^{\omega_y'}(y,0)
\right].
\end{align*}
Proposition~\ref{prop:sensitivity-rho} follows.
\end{proof}

\subsection{Rate of convergence for the average of the invariant measure: Proof of Theorem~\ref{thm:ave-rho-quant}}

Now we will proceed to prove one of the main theorems in this paper, Theorem~\ref{thm:ave-rho-quant}.

It will be clear in the proof that the $\log R$ term in the $C^{1,1}$ bound of Theorem~\ref{thm:c11}
is important for us to obtain the logarithmic term in Theorem~\ref{thm:ave-rho-quant}.

The following simple fact of random variables will be used in the proof.
\begin{lemma}\label{fact:max}
Let $p>0$ and let $(X_i)_{i=1}^\infty$ be non-negative random variables with $E[\exp(X_i^p)]<C$ for all $i \geq 1$.  
Then, for $M_n=\max_{1\le i\le n}X_i-(2\log n)^{1/p}$, we have, for all $n\geq 1$,
\[
E[\exp(cM_n^p)]<C.
\]
\end{lemma}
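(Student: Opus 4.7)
\textbf{Proof proposal for Lemma \ref{fact:max}.} The plan is to control the tail $\mb P(M_n > t)$ by the union bound and then integrate against the exponential weight. Set $b_n := (2\log n)^{1/p}$. By Chebyshev's inequality applied to the hypothesis $E[\exp(X_i^p)] < C$, we have $\mb P(X_i > s) \leq C e^{-s^p}$ for every $s \geq 0$, so the union bound yields
\[
\mb P(M_n > t) \;=\; \mb P\big(\max_i X_i > t+b_n\big) \;\leq\; nC \exp\!\big(-(t+b_n)^p\big), \qquad t \geq 0.
\]

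The key elementary inequality, valid uniformly in $p>0$, is the pointwise bound
\[
(t+b_n)^p \;\geq\; \max(t^p, b_n^p) \;=\; \max(t^p,\,2\log n),
\]
which does not require the convexity estimate $(x+y)^p\geq x^p+y^p$ (the latter fails for $p<1$). This naturally suggests splitting the integral at $t=b_n$: for $t\leq b_n$ we use the bound $\mb P(M_n>t)\leq C/n$, while for $t\geq b_n$ we use $\mb P(M_n>t)\leq nCe^{-t^p}$.

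Interpreting $M_n^p := (M_n\vee 0)^p$ (so that $\exp(cM_n^p) = 1$ on $\{M_n \leq 0\}$), the layer-cake formula gives
\[
E[\exp(cM_n^p)] \;\leq\; 1 + \int_0^\infty cpt^{p-1}e^{ct^p}\,\mb P(M_n>t)\,dt.
\]
Inserting the two tail bounds on the two regions and using $\int_0^{b_n} cpt^{p-1}e^{ct^p}\,dt = e^{cb_n^p} - 1 = n^{2c} - 1$ together with $\int_{b_n}^\infty cpt^{p-1}e^{-(1-c)t^p}\,dt = \tfrac{c}{1-c}e^{-(1-c)b_n^p} = \tfrac{c}{1-c}n^{-2(1-c)}$ yields
\[
E[\exp(cM_n^p)] \;\leq\; 1 \;+\; C n^{2c-1} \;+\; \frac{Cc}{1-c}\, n^{2c-1}.
\]
Choosing $c \leq 1/2$ (and $c<1$) makes $2c-1\leq 0$, giving a bound uniform in $n\geq 1$.

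I do not expect a real obstacle here. The only mildly delicate point is that the simple inequality $(t+b_n)^p\geq t^p + b_n^p$ breaks down for $p<1$, so one must use instead the maximum bound and split the integral accordingly; apart from that, everything reduces to explicit elementary integrals.
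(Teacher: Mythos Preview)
Your proof is correct. The paper does not actually supply a proof of this lemma; it is stated as a ``simple fact of random variables'' and used without argument. Your approach---Markov's inequality for the individual tails, a union bound, and then a layer-cake computation split at $t=b_n$ using the monotonicity bound $(t+b_n)^p\ge\max(t^p,b_n^p)$---is the natural elementary argument and is exactly what one would expect here. Your observation that one must avoid the false-for-$p<1$ inequality $(x+y)^p\ge x^p+y^p$ and use the maximum bound instead is the only point requiring care, and you handle it correctly. The interpretation $M_n^p:=(M_n\vee 0)^p$ is also consistent with how the lemma is applied later in the paper (e.g., the quantity $\tx_2^*$ in the proof of Theorem~\ref{thm:ave-rho-quant} is defined with an explicit positive part).
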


\begin{proof}
[Proof of Theorem~\ref{thm:ave-rho-quant}]
We divide the proof into several steps.

\noindent {\bf Step 1.} 
  Let $u:\R_+\to\R_+$ be the function
 \begin{equation}\label{eq:def-u}
u(r)=\left\{
\begin{array}{lr}
\log (r+1) &\text{ when }d=2\\
(r+1)^{2-d} &\text{ when }d\ge 3.
\end{array}
\right. 
 \end{equation}
 
 We will show that for any $\error>0,R\ge 2$, there exists a random variable $\tx^*(\omega)=\tx^*(R,\omega;d,\kappa,\error)>0$ with $\mb E[\exp(c\tx^{*d-\error})]<C$ such that, $\mb P$-a.s.,
 \begin{equation}
 \label{eq:bd-g-y-br}
\osc_{B_{(|y|+R)/2}(y)}G^\omega(\cdot,B_R)
\lesssim
\left\{
\begin{array}{lr}
\tx^{*d-1}u(|y|)R^d &\text{ if }|y|>4R\\
\tx^{*d-1}R^2\log R &\text{ if }|y|\le 4R.
\end{array}
\right.
 \end{equation}
 Indeed, by Theorem~\ref{thm:Green-bound},  for $z\in\Z^d$,
\begin{equation}\label{eq:g-ball-bd}
\abs{G(z,B_R)}\lesssim\sum_{x\in B_R}\tx_x^{d-1}u(|z-x|).
\end{equation}

When $|y|>4R$,  $u(|z-x|)\asymp u(|y|)$ for all $x\in B_R, z\in B_{(|y|+R)/2}(y)$,  and so
\begin{equation}\label{eq:g-ball-bd-1}
\abs{G(z,B_R)}
\stackrel{\eqref{eq:g-ball-bd}}\lesssim \sum_{x\in B_R}\tx_x^{d-1}u(|y|)
\lesssim
\tx_1^{*d-1}u(|y|)R^d,
\end{equation}
where $\tx_1^*=(\tfrac1{|B_R|}\sum_{x\in B_R}\tx_x^{d-1})^{1/(d-1)}$.

When $|y|\le 4R$ and $d=2$,  for all $z\in B_{(|y|+R)/2}(y)$, we have $u(|z-x|)\lesssim\log R$ $\forall x\in B_R$, and so
\begin{equation}
\label{eq:g-ball-bd-2}
|G(z,B_R)|\stackrel{\eqref{eq:g-ball-bd}}\lesssim \log R\sum_{B_R}\tx_x^{d-1}=\tx_1^{*d-1}R^2\log R.
\end{equation}

When  $|y|\le 4R$ and $d\ge 3$, for all $z\in B_{(|y|+R)/2}(y)$,  \eqref{eq:g-ball-bd} yields
\begin{align}\label{eq:g-ball-bd-3}
\abs{G(z,B_R)}
&\lesssim
[\tx_2^*+(2d\log R)^{1/(d-1)}]^{d-1}\sum_{x\in B_{4R}}u(|x|)\nn\\
&\lesssim 
(\tx_2^{*d-1}+\log R)R^2,
\end{align}
where $\tx_2^*=[\max_{x\in B_R}\tx_x-(2d\log R)^{1/(d-1)}]_+$.  
Recall $\tx=\tx(\omega,d,\kappa,\error)$ in Theorem~\ref{thm:hk-bounds}.
Noting that for $t>1$ and $p=d-\error>d-1$, by Lemma~\ref{fact:max} we have
$\mb E[\exp(c\tx_2^{*d-\error})]\le C$.  Note also that, by Jensen's inequality,  $\mb E[\exp(c\tx_1^{d-\error})]\le C$.

Setting $\tx^*=\tx^*_1+\tx^*_2$,  \eqref{eq:bd-g-y-br} follows from \eqref{eq:g-ball-bd-1}, \eqref{eq:g-ball-bd-2}, and \eqref{eq:g-ball-bd-3}.

\noindent{\bf Step 2.} Next, we will show that
 \begin{equation}
 \label{eq:green-c2-bd}
|\nabla^2 G^\omega(y,B_R)|
\lesssim
\left\{
\begin{array}{lr}
\tx_y^2\tx^{*d-1}|y|^{-2}u(|y|)R^d &\text{ if }|y|>4R\\
\tx_y^2\tx^{*d-1}\log R &\text{ if }|y|\le 4R,
\end{array}
\right.
 \end{equation}
 where the operator $\nabla^2$ is only applied to the first $\Z^d$ coordinate of $G(\cdot,\cdot)$.  

When $|y|>4R$, by Theorem~\ref{thm:c11} and \eqref{eq:bd-g-y-br},
\begin{align*}
|\nabla^2 G(y,B_R)|
\lesssim
\frac{\tx_y^2}{|y|^2}\osc_{B_{|y|/2}(y)}G(\cdot,B_R)
\lesssim
\tx_y^2\tx_1^{*d-1}|y|^{-2}u(|y|)R^d.
\end{align*}

When $|y|\le 4R$,  applying Theorem~\ref{thm:c11} (with $\psi=0$, $f=-\mathbbm1_{B_R}$) again, we get
\begin{align*}
|\nabla^2 G(y,B_R)|
\lesssim
\frac{\tx_y^2}{R^2}\osc_{B_{R/2}(y)}G(\cdot,B_R)+\tx_y^2\log R
\stackrel{\eqref{eq:bd-g-y-br}}\lesssim
\tx_y^2\tx^{*d-1}\log R.
\end{align*}

\noindent{\bf Step 3.}
By Proposition~\ref{prop:sensitivity-rho}, Theorem~\ref{thm:hk-bounds}, and \eqref{eq:green-c2-bd},
\begin{align}\label{eq:rho-sensi}
\Abs{\vd{y}\tfrac{\rho_\omega(B_R)}{|B_R|}}
\lesssim
R^{-d}\rho_\omega(y)\abs{\nabla^2 G^{\omega_y'}(y,B_R)}
\lesssim
\ms J_y(\omega,\omega')w(|y|),
\end{align}
where $\ms J_y(\omega,\omega'):=\tx_y^{d-1}(\omega)\tx_y^2(\omega_y')\tx^{*d-1}(\omega_y')$, and
\[
w(r)=\left\{
\begin{array}{lr}
r^{-2}u(r) &\text{ if }r>4R\\
R^{-d}\log R &\text{ if }r\le 4R.
\end{array}
\right.
\]
Note that $\mb E[\ms J_y^n]\le \mb E[\tx^{2dn}]$ for all $y\in\Z^d,n\ge 1$, and
\[
[\sum_{x\in\Z^d}w^2(|x|)]^{1/2}
\asymp R^{-d/2}\log R=:F(R).
\]
Applying  Lemma~\ref{lem:efron-stein} to  $Z(\omega)=\rho_\omega(B_R)/|B_R|$, 
we get
\[
\mb E\left[\exp\big(
C_\error|(Z-1)/F(R)|^{2(0.5-\error)/(2+0.5-\error)}
\big)\right]
\lesssim 
E[\exp(c\tx^{2d(0.5-\error)})]<\infty.
\]
The theorem follows.
\end{proof}

\subsection{Correlation structure of the field of the invariant measure}

In this subsection we will investigate the mixing property of the field by showing the rate of decay of its correlations.
Intuitively, since $\rho_\omega(x)$ is determined by the long term frequency of visits of the RWRE to $x$,  the influence of environments at remote locations will be small.

Our proof uses the following covariance version of Efron-Stein inequality
\begin{equation}\label{eq:cov-ef-st}
\abs{\cov_{\mb P}(F,G)}
\le 
\sum_y \norm{\vd y F}_{L^{2}(\mb P)}\norm{\vd yG}_{L^{2}(\mb P)},
\quad \forall F,G\in L^2(\mb P).
\end{equation}
Such an inequality can be obtained using the same martingale-difference decomposition as  in Efron-Stein's inequality.  See \cite[Lemma 3]{GO-12}, \cite[(4.4)]{Gu-Mourrat-16}. 


\begin{proof}[Proof of Proposition~\ref{prop:correlation}]
{\bf (i)} 
Recall the function $u(r)$ defined in \eqref{eq:def-u}. 
For $y\notin B_r$,  $n\ge 1$, by Proposition~\ref{prop:sensitivity-rho} and Theorems~\ref{thm:hk-bounds}, \ref{thm:Green-bound}, \ref{thm:c11},
\begin{align*}
\norm{\vd y(\rho-\rho_r)}_{L^n(\mb P)}
&=\norm{\vd y \rho}_{L^n(\mb P)}\\
&\lesssim
\norm{\rho(y)|\nabla^2 G^{\omega_y'}(y,0)|}_{L^n(\mb P)}\\
&\lesssim 
\norm{\tfrac{u(|y|)}{|y|^2}(\tx_y\tx(\omega_y'))^{d-1} \tx_y^2(\omega_y')}_{L^n(\mb P)}\\
&\lesssim
\tfrac{u(|y|)}{|y|^2}\norm{\tx^{2d}}_{L^n(\mb P)}.
\end{align*}
Note that for $r\ge 2$,
\[
\sum_{y\notin B_r}|y|^{-4}u(|y|)^2
\asymp
\int_r^\infty
s^{-4}u(s)^2 s^{d-1}\dd s
\lesssim
\left\{
\begin{array}{lr}
r^{-2}(\log r)^2, &d=2\\
r^{-d}, & d\ge 3
\end{array}
\right.
=:F^2.
\]
Then, applying Lemma~\ref{lem:efron-stein} to $Z=\rho(0)-\rho_r(0)$ (as a field over $\Z^d\setminus B_r$),  we obtain
\[
\mb E\left[\exp\big(
C|F^{-1}(Z-\mb EZ)|^{2q/(2+q)}
\big)\right]
\lesssim 
E[\exp(c\tx^{2dq})], \quad \forall q\in(0,1/2).
\]
Statement \eqref{item:prop-cor-1} is proved.

{\bf (ii)}
By \eqref{eq:cov-ef-st} and Proposition~\ref{prop:sensitivity-rho}, we have
\begin{align*}
\abs{\cov(\rho(0),\rho(x))}
\lesssim
\sum_y \norm{\rho(y)\nabla^2 G^{\omega_y'}(y,0)}_{L^{2}(\mb P)}\norm{\rho(y)\nabla^2 G^{\omega_y'}(y,x)}_{L^{2}(\mb P)},
\end{align*}
where $\nabla^2$ is only applied to the first $\Z^d$ coordinate in the argument of $G^{\omega_y'}(\cdot,\cdot)$.  
Let $u$ be as in \eqref{eq:def-u}. 
Using the bounds of $\rho$ in Theorem~\ref{thm:hk-bounds} and applying the $C^{1,1}$ estimates Theorem~\ref{thm:c11} to $G^{\omega_y'}$,  we further get
\begin{align*}
\abs{\cov(\rho(0),\rho(x))}
&\lesssim
\sum_y
\frac{u(|y|)}{(1+|y|)^2}\frac{u(|x-y|)}{(1+|x-y|)^2}\\
&\lesssim
\left\{
\begin{array}{lr}
\frac{\log(2+|x|)}{(1+|x|)^d} & d\ge 3\\
\frac{\log(2+|x|)^3}{(1+|x|)^2} &d=2.
\end{array}
\right.
\end{align*}
The verification of the last inequality is similar to \cite[Lemma 9.1]{Gu-Mourrat-16}.
\end{proof}

\section{Quantification of the diffusive behavior}
In this section, $\psi$ is always assumed to be a local function.

\subsection{Estimates of the approximate corrector}\label{subsec:approx_cor}
We consider the function $\apk:\Z^d\to\R$ defined as
\begin{equation}\label{eq:def-phihat}
\apk(x)=\apk_\omega(x;\psi,R)=
-\int_0^\infty e^{-t/R^2}E_\omega^x\big[\psi(\theta_{Y_t}\omega)-E_\Q[\psi]\big]\dd t.
\end{equation}
where $R\ge 1$, and $\psi$ is measurable function of $\omega(0)$.
Notice that $\apk$ is stationary, i.e., $\apk_\omega(x)=\apk_{\theta_x\omega}(0)$. Moreover, 
 $\apk$ is a solution of 
\begin{equation}\label{eq:eq_of_phihat}
L_\omega\apk(x)=\tfrac{1}{R^2}\apk(x)+\psi(\theta_x\omega)-E_\Q[\psi],
\quad 
x\in\Z^d.
\end{equation}
Without loss of generality, assume $E_\Q[\psi]=0$. 
Clearly,  by the definition of $\apk$ in \eqref{eq:def-phihat}, for any $\omega\in\Omega$,
\begin{equation}
\label{eq:hatphi-bound}
\sup_{x\in\Z^d}|\apk(x)|\le R^2\norm{\psi}_\infty.
\end{equation}
and so $\norm{\tfrac{1}{R^2}\apk(x)+\psi(\theta_x\omega)}_\infty\le 2\norm{\psi}_\infty$. 
By \eqref{eq:hatphi-bound} and the H\"older estimate \eqref{eq:osc}, 
\[
[\apk]_{\gamma;B_{R/2}}\lesssim
R^{-\gamma}[\max_{B_R}|\apk|+R^2\norm{R^{-2}\apk+\psi}_{d;B_R}]\lesssim R^{2-\gamma}\norm{\psi}_\infty.
\]
Hence,  for any $2\le D\le R$, applying \eqref{eq:c2} to $f=\apk/R^2$ and $\sigma=\gamma$ in $B_D$, we get
\begin{align}
|\nabla\apk(0)|
&\lesssim 
\tx(D\norm{\psi}_\infty+\tfrac{1}{D}\norm{\apk}_{1;B_D}), \label{eq:phihat_c01}
\\
\abs{\nabla^2\apk(0)}
&\lesssim 
\tx^{2}\norm{\psi}_\infty. \label{eq:phihat_c11}
\end{align}

The goal of this subsection is to establish the optimal rate of convergence of the approximate corrector.  Recall the function $\mu(R)$ defined in \eqref{eq:def-mu}.
\begin{lemma}\label{lem:approx-cor-concentration}
Assume that $\psi(\omega)=\psi(\omega(0))$ is a bounded function of  $\omega(0)$.
For any $0<p<\tfrac{2d}{3d+2}$, there exists $C=C(d,\kappa,p)$ such that for $t\ge 0$, $R\ge 2$,
\[
\mb P\left(
\abs{\apk(0)}\ge 
t\mu(R)\norm{\psi}_\infty
\right)\le C\exp(-\tfrac{1}{C}t^p).
\]
\end{lemma}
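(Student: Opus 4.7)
My plan is to decompose $\apk(0) = \mb E[\apk(0)] + (\apk(0) - \mb E[\apk(0)])$ and show each piece is of size $\mu(R)\norm{\psi}_\infty$, following the Efron-Stein / sensitivity strategy of Section~\ref{sec:mixing}. First, since we may assume $E_\Q[\psi]=0$, the equation \eqref{eq:eq_of_phihat} reads $L_\omega\apk(x)=R^{-2}\apk(x)+\psi(\theta_x\omega)$. Applying \eqref{eq:vdlaplace} with $\xi(x,\omega)=R^{-2}\apk(x)+\psi(\theta_x\omega)$, and noting that $\vd y[\psi(\theta_x\omega)]$ is supported at $x=y$ by locality of $\psi$, one finds
\[
(L_\omega - R^{-2})(\vd y\apk)(x) = -\bigl[\tfrac{1}{2}\tr(\vd y a(y)\,\nabla^2\apk'_y(y)) + \vd y\psi(\omega)\bigr]\mathbbm{1}_{x=y}.
\]
Representing $\vd y\apk(0)$ through the Green function of $L_\omega-R^{-2}$, bounding that Green function via Theorem~\ref{thm:hk-bounds} by $\tx(\omega)^{d-1}g(|y|)$ with a deterministic shape $g$, and using \eqref{eq:phihat_c11} to bound $|\nabla^2\apk'_y(y)|\lesssim \tx(\omega'_y)^2\norm{\psi}_\infty$, I obtain
\[
|\vd y\apk(0)| \lesssim \tx(\omega)^{d-1}\tx(\omega'_y)^2 \norm{\psi}_\infty \, g(|y|),
\]
where $g(r)\asymp \log(2+R/(r\vee 1))$ for $d=2$ and $g(r)\asymp (r\vee 1)^{2-d}$ for $d\ge 3$, with an extra mild decay for $r\gg R$ produced by the factor $e^{-t/R^2}$.

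A direct polar integration then gives $F:=(\sum_{y\in\Z^d} g(|y|)^2)^{1/2} \asymp \mu(R)$, matching the four regimes of \eqref{eq:def-mu}: $R$ when $d=2$, $R^{1/2}$ when $d=3$, $(\log R)^{1/2}$ when $d=4$, and $O(1)$ when $d\ge 5$. The random prefactor $\ms X := \tx(\omega)^{d-1}\tx(\omega'_y)^2$ is a product of independent factors with stretched-exponential tails of orders $(d-\error)/(d-1)$ and $(d-\error)/2$, respectively, and a routine moment computation shows $\mb E[\exp(c\ms X^p)]<\infty$ for every $p<d/(d+1)$. Plugging $f(y):=g(|y|)\norm{\psi}_\infty$ and $\ms X$ into Lemma~\ref{lem:efron-stein} and letting $p\uparrow d/(d+1)$ yields, for every $q<2d/(3d+2)$,
\[
\mb P\bigl(|\apk(0) - \mb E[\apk(0)]|\ge t\mu(R)\norm{\psi}_\infty\bigr) \le C\exp(-ct^q).
\]

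To control the mean, I use that $\apk$ is stationary and $\Q$ is invariant for $(\theta_{Y_t}\omega)$: applying $E_\Q$ to \eqref{eq:eq_of_phihat} at $x=0$ gives $R^{-2}E_\Q[\apk(0)]=0$, so $E_\Q[\apk(0)]=0$, and consequently $\mb E[\apk(0)] = -\cov_{\mb P}(\rho(0),\apk(0))$. Combining the covariance Efron-Stein inequality \eqref{eq:cov-ef-st} with $\norm{\vd y\rho}_{L^2(\mb P)}\lesssim u(|y|)/|y|^2$ (from the proof of Proposition~\ref{prop:correlation}, with $u$ as in \eqref{eq:def-u}) and the sensitivity bound above for $\apk$, one gets a deterministic sum
\[
|\mb E[\apk(0)]| \lesssim \norm{\psi}_\infty \sum_{y\in\Z^d}\frac{u(|y|)}{|y|^2}g(|y|),
\]
which a short case check shows is $\lesssim \mu(R)\norm{\psi}_\infty$ in every dimension $d\ge 2$ (e.g.\ $\lesssim(\log R)^3 \le R$ when $d=2$; bounded by a constant $\le R^{1/2}$ when $d=3$; and so on). Absorbing the mean into the fluctuation estimate by a trivial shift in $t$ completes the proof.

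The step I expect to require the most care is the killed-Green-function bound: Theorem~\ref{thm:Green-bound} treats the Green function in $B_R$, not the one associated with $L_\omega - R^{-2}$, so I must integrate the heat-kernel estimate of Theorem~\ref{thm:hk-bounds} against $e^{-t/R^2}$ and verify that this produces both the stated shape $g$ and the random factor $\tx(\omega)^{d-1}$. This $\tx^{d-1}$ prefactor (rather than, say, $\tx$) is precisely what forces the exponent $\frac{2d}{3d+2}$ and prevents a stronger concentration.
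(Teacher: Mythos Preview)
Your fluctuation argument is essentially the paper's: derive \eqref{eq:L-apg}, represent $\vd y\apk(0)$ through the killed Green function $\apg_\omega$, bound it via the heat-kernel estimate, and feed the result into Lemma~\ref{lem:efron-stein}. Two points need correction. First, Theorem~\ref{thm:hk-bounds}(c) controls $p_t^\omega(x,0)$ in terms of $\tx(\omega)$, so after translation the bound on $p_t^\omega(0,y)$ carries the factor $\tx_y^{d-1}=\tx(\theta_y\omega)^{d-1}$, not $\tx(\omega)^{d-1}$; similarly $|\nabla^2\apk'_y(y)|\lesssim\tx(\theta_y\omega'_y)^2\norm{\psi}_\infty$. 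Second, $\tx_y^{d-1}$ and $\tx(\theta_y\omega'_y)^2$ are \emph{not} independent (the environments $\theta_y\omega$ and $\theta_y\omega'_y$ agree except at the origin); the moment bound $\mb E[(\tx_y'^{2}\tx_y^{d-1})^n]\le\mb E[\tx^{(d+1)n}]$ follows from H\"older, not independence. Neither error changes the final exponent $\tfrac{2d}{3d+2}$, since all factors have the law of $\tx$.

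Your treatment of the mean is a genuine detour. The paper observes, as you do, that $E_\Q[\apk(0)]=0$, but then simply writes $|\mb E[Z]|=|E_\Q[Z-\mb EZ]|\le\norm{\rho}_{L^2(\mb P)}\norm{Z-\mb EZ}_{L^2(\mb P)}\le C$, using only Cauchy--Schwarz and $\rho\in L^2(\mb P)$ from Theorem~\ref{thm:hk-bounds}. Your route via \eqref{eq:cov-ef-st} and the sensitivity bound on $\rho$ from Proposition~\ref{prop:correlation} is correct and gives more explicit information (the dimension-by-dimension sums you sketch do converge and are $\lesssim\mu(R)$), but it is considerably heavier and logically depends on Section~\ref{sec:mixing}, whereas the paper's one-line bound needs only the fluctuation estimate you already proved.
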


The continuous version of Lemma~\ref{lem:approx-cor-concentration} was proved earlier by Armstrong, Lin \cite{AL-17}.
Our result in two dimensions ($d=2$) is slightly better than that in \cite{AL-17}.

\begin{proof}
[Proof of Lemma~\ref{lem:approx-cor-concentration}]
We now obtain Lemma~\ref{lem:approx-cor-concentration} using the concentration inequality \eqref{eq:bblm}. 
We will need a bound for $\vd{y}\apk(0)$. 
Recall the vertical derivative $\vd y$ and the notation $\omega'_y$ as in \eqref{eq:def_vert_der}. 
By \eqref{eq:eq_of_phihat} and formula \eqref{eq:vdlaplace}, 
$\vd{y}\apk(x)$ satisfies, for $x,y\in\Z^d$,
\begin{align}\label{eq:L-apg}
L_{\omega}(\vd{y}\apk)(x)
=
R^{-2}\vd{y}\apk(x)+\left[\vd y\psi(\theta_y\omega)-\tfrac12\tr\big(\vd y a(y)\nabla^2\apk_{\omega'_y}(y)\big)\right]\mathbbm{1}_{x=y}.
\end{align}	
Denoting the Green function associated to the operator in  \eqref{eq:eq_of_phihat} by
\begin{equation}\label{eq:def-apg}
\apg_\omega(x,y)=\int_0^\infty
e^{-t/R^2}p_t^{\omega}(x,y)\dd t, 
\end{equation}
equality \eqref{eq:L-apg} yields
\begin{equation}\label{eq:w-appr-gf}
\vd{y}\apk(x)=
-\left[\vd y\psi(\theta_y\omega)-\tfrac12\tr\big(\vd y a(y)\nabla^2\apk_{\omega'_y}(y)\big)\right]
\apg_\omega(x,y).
\end{equation}
Note that by Theorem~\ref{thm:hk-bounds}\eqref{item:hke-expmmt}, 
\begin{align}\label{eq:verti-der-phi}
\apg_\omega(0,y)
&\lesssim
\tx_y^{d-1}\int_0^\infty(1+t)^{-d/2}\exp\left[-\tfrac{t}{R^2}-c\mf h(|y|,t)\right]\dd t\nn\\
&\lesssim
\tx_y^{d-1}e^{-c|y|/R}\nu(|y|),
\end{align}
where  $\tx_y(\omega)=\tx(\theta_y\omega)$ and 
\begin{equation}\label{eq:nu}
\nu(r)=\nu_R(r)=
\left\{
\begin{array}{lr}
1+\log(\tfrac{R}{(r+1)\wedge R}) & d=2\\
(r+1)^{2-d} &d\ge 3.
\end{array}
\right.
\end{equation}
Thus,  with $\tx_y':=\tx(\theta_y\omega'_y)$, 
by \eqref{eq:w-appr-gf},\eqref{eq:verti-der-phi}, and \eqref{eq:phihat_c11}, we get
\begin{align}\label{eq:vdbound}
|\vd{y}\apk(0)|
\lesssim
{{\tx}_y'}^{2}\tx_y^{d-1}\norm{\psi}_\infty e^{-c|y|/R}
\nu(|y|).
\end{align}	
Notice that $\nu_R$ depends on $R$ only in $d=2$. 
Recall $\mu(R)$ in \eqref{eq:def-mu}. Notice that 
\begin{equation}
\label{eq:nu2-sum}
\sum_{y\in\Z^d}e^{-2c|y|/R}\nu(|y|)^2
\lesssim
\int_0^\infty 
e^{-2cr/R}\nu(r)^2 r^{d-1}\dd r
\lesssim
\mu(R)^2.
\end{equation}
The verification of inequalities \eqref{eq:verti-der-phi} and \eqref{eq:nu2-sum} are included in the Appendix.

Since $\mb E[({{\tx}_y'}^{2}\tx_y^{d-1})^n]\le \mb E[\tx^{(d+1)n}]$ for all $y\in\Z^d, n\ge 1$,
applying Lemma~\ref{lem:efron-stein} to
$Z(\omega):=\frac{\apk(0)}{\norm{\psi}_\infty \mu(R)}$,
with $p=\frac{2(d-\error)/(d+1)}{2+(d-\error)/(d+1)}$,
we get
\[
\mb E[\exp(c|Z-\mb EZ|^p)]
\lesssim
\mb E[
\exp(c|\tx^{d+1}|^{(d-\error)/(d+1)})
]
<C.
\]
In particular, $\mb E[|Z-\mb EZ|^2]<C$.

To prove Lemma~\ref{lem:approx-cor-concentration}, by Chebyshev's inequality,  we only need to show that
\begin{equation}\label{item:old-rho-integ}
\mb E[\exp(c|Z|^p)]<C.
\end{equation}
It suffices to show that $|\mb EZ|<C$.
Since $\mb Q$ is an invariant measure for $(\theta_{Y_t}\omega)_{t\ge 0}$, we have $E_{\Q}E_\omega^0[\psi(\theta_{Y_t}\omega)]=E_\Q[\psi]=0$ for all $t\ge 0$. Hence, by \eqref{eq:def-phihat}, we know
\[
E_\Q[\apk(0)]=0
\]
and so $E_\Q[Z]=0$. Further, by H\"older's inequality and Theorem~\ref{thm:hk-bounds},
\begin{align*}
|\mb EZ|=
\abs{E_\Q[Z-\mb EZ]}\le 
E_\Q[|Z-\mb EZ|]
\le 
\norm{\rho}_{L^2(\mb P)} \norm{Z-\mb EZ}_{L^2(\mb P)}
\le C
\end{align*}	
Therefore, we obtain \eqref{item:old-rho-integ}. Lemma~\ref{lem:approx-cor-concentration} follows.
\end{proof}

As a consequence of 
 Lemma~\ref{lem:approx-cor-concentration} and Theorem~\ref{thm:c11},  we have the following $C^{0,1}$ estimate of the approximate corrector $\apk$.
\begin{corollary}
\label{cor:c01-apk}
Let $\psi, R\ge 2, \apk$ be as in Lemma~\ref{lem:approx-cor-concentration}.  For any $0<s<\tfrac{2d}{3d+4}$, there exists a random variable $\ms Y=\ms Y(R,s,\omega)>0$ with $\mb E[\exp(\ms Y^s)]<\infty$ such that, for $\mb P$-a.e. $\omega$ and $x\in\Z^d$,
\[
|\nabla\apk_R(x)|\le \ms Y(\theta_x\omega)\sqrt{\mu(R)}.
\]
\end{corollary}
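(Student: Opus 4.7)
The plan is to upgrade the pointwise concentration bound of Lemma~\ref{lem:approx-cor-concentration} into a gradient bound by feeding it into the deterministic interior estimate \eqref{eq:phihat_c01}. WLOG assume $\|\psi\|_\infty = 1$ and $E_\Q[\psi]=0$. For any $2\le D\le R$, \eqref{eq:phihat_c01} gives
\[
|\nabla\apk(0)|\lesssim \tx\Big(D+\tfrac{1}{D}\,\|\apk\|_{1;B_D}\Big).
\]
Since Lemma~\ref{lem:approx-cor-concentration} says $\apk(y)$ is typically of size $\mu(R)$, the two terms on the right balance when $D\asymp\sqrt{\mu(R)}$. I therefore pick the deterministic scale $D:=\max\bigl(2,\lceil\sqrt{\mu(R)}\,\rceil\bigr)$ (observe $D\le R$, because $\mu(R)\le R$ in all dimensions $d\ge 2$), which yields
\[
\frac{|\nabla\apk(0)|}{\sqrt{\mu(R)}}\lesssim \tx\bigl(1+\ms Z(\omega)\bigr),\qquad \ms Z(\omega):=\frac{\|\apk\|_{1;B_D}}{\mu(R)}.
\]

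Next I record a tail bound for $\ms Z$. By the stationarity $\apk(y)=\apk_{\theta_y\omega}(0)$, Lemma~\ref{lem:approx-cor-concentration} applies at every lattice point, so for any $p<\tfrac{2d}{3d+2}$ we have $\mb E[\exp(c|\apk(y)/\mu(R)|^p)]\le C$ uniformly in $y$. Using Jensen's inequality for the concave map $t\mapsto t^p$ and then convexity of $\exp$,
\[
\exp(c\,\ms Z^p)\le \frac{1}{|B_D|}\sum_{y\in B_D}\exp\!\bigl(c|\apk(y)/\mu(R)|^p\bigr),
\]
so $\mb E[\exp(c\,\ms Z^p)]\le C$ as well, uniformly in $R$.

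To close, set $\ms Y(\omega):=C\tx(\omega)\bigl(1+\ms Z(\omega)\bigr)$ with $C$ the implicit constant above; the desired bound at $x\in\Z^d$ then follows from the bound at $0$ by stationarity, since $\nabla\apk(x)=\nabla\apk_{\theta_x\omega}(0)$. It remains to verify $\mb E[\exp(\ms Y^s)]<\infty$ for $s<\tfrac{2d}{3d+4}$. Using Young's inequality $\tx^s\ms Z^s\le \tx^{sp'}/p'+\ms Z^{sq'}/q'$ (with $1/p'+1/q'=1$) together with Hölder's inequality reduces the matter to choosing $p',q'>1$ such that $sp'<d-\error$ and $sq'<p<\tfrac{2d}{3d+2}$ simultaneously. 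The constraint $1/p'+1/q'=1$ together with the two upper bounds is solvable precisely when
\[
\frac{s}{d-\error}+\frac{s(3d+2)}{2d}<1,
\]
and letting $\error\to 0$ this becomes the sharp threshold $s<\tfrac{2d}{3d+4}$.

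The argument is essentially mechanical once the balancing scale $D=\sqrt{\mu(R)}$ is identified; the one point that requires a little care is the Young/Hölder optimization, since the exponent $\tfrac{2d}{3d+4}$ in the conclusion is exactly the harmonic combination of the tail exponent $d$ for $\tx$ (from Theorem~\ref{thm:hk-bounds}) and the tail exponent $\tfrac{2d}{3d+2}$ for the corrector itself (from Lemma~\ref{lem:approx-cor-concentration}), so the proof has no slack in either input.
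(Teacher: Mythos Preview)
Your strategy is exactly the paper's: feed \eqref{eq:phihat_c01} with the balancing scale $D\asymp\sqrt{\mu(R)}$, invoke Lemma~\ref{lem:approx-cor-concentration} for the averaged corrector, and then combine the tails of $\tx$ and the corrector via Young/H\"older to land on the exponent $\tfrac{2d}{3d+4}$. The final optimization is carried out correctly.

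There is, however, one genuine slip. The displayed pointwise inequality
\[
\exp\bigl(c\,\ms Z^p\bigr)\le \frac{1}{|B_D|}\sum_{y\in B_D}\exp\bigl(c\,|\apk(y)/\mu(R)|^p\bigr)
\]
is false in general: the map $t\mapsto\exp(ct^p)$ is \emph{not} convex on $[0,\infty)$ for $0<p<1$ (it is concave near $0$), so Jensen does not apply. For a concrete counterexample take two points $a_1=0$, $a_2=\varepsilon$ with $\varepsilon$ small; then the left side exceeds the right side since $(\varepsilon/2)^p>\varepsilon^p/2$. Your justification ``Jensen for the concave $t\mapsto t^p$ then convexity of $\exp$'' actually yields the wrong direction in the first step.

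The fix is easy and costs nothing: work with moments instead. For every $q\ge 1$, convexity of $t\mapsto t^q$ and stationarity give
\[
\mb E[\ms Z^q]\le \frac{1}{|B_D|}\sum_{y\in B_D}\mb E\bigl[|\apk(y)/\mu(R)|^q\bigr]=\mb E\bigl[|\apk(0)/\mu(R)|^q\bigr].
\]
Expanding $\exp(c\ms Z^p)$ in its power series, the terms with $np\ge 1$ are controlled termwise by the corresponding terms for $|\apk(0)/\mu(R)|$, and the finitely many terms with $np<1$ are bounded by $1+\mb E[\ms Z]$. Hence $\mb E[\exp(c\ms Z^p)]\le C$ for all $p<\tfrac{2d}{3d+2}$, uniformly in $R$, which is what you need. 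With this repair the rest of your argument goes through unchanged.
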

\begin{proof}[Proof of Corollary~\ref{cor:c01-apk}]
For any $2\le D\le R$ and $p\in(0,d)$,  by \eqref{eq:phihat_c01} and Lemma~\ref{lem:approx-cor-concentration}, there exists a random variable $\ms Y^*(R,p,\omega)$ with $\mb E[\exp(\ms Y^{*p})]<C$ such that 
\[
|\nabla\apk_R(0)|
\lesssim 
\tx\big(D\norm{\psi}_\infty+\tfrac{1}{D}\ms Y^{*(3d+2)/2}\mu(R)
\big).
\]
Putting $D=\sqrt{\mu(R)}$,  we obtain the corollary.
\end{proof}

\subsection{Quantification of the ergodicity of the environmental process: Proof of Theorem~\ref{thm:quant-ergo}}

In this section we will derive the optimal rates of convergence (as $t\to\infty$) of the ergodic average $\tfrac 1t  E_\omega[\int_0^t\psi(\evp{s})\dd s]$, where $\evp s$ denotes the process of the environment viewed from the particle:
\[
\evp{s}:=\theta_{Y_s}\omega.
\]

With Lemma~\ref{lem:approx-cor-concentration}, it may be tempting to compare the approximate corrector $\apk$ in \eqref{eq:def-phihat} to the corrector within a finite ball $B_R$, i.e., the solution $u$ to the Dirichlet problem $L_\omega u=\psi_\omega$ in $B_R$ with $u=0$ on $\partial B_R$.   However,  such comparison involves controlling the boundary error $\max_{\partial B_R}\apk$ which would result in an extra $\log R$ factor.  In what follows, 
we will follow the argument of Kipnis and Varadhan \cite{KV-86} to approximate  $E_\omega[\int_0^T \psi(\theta_{Y_s}\omega)\dd s]$ with a martingale using the approximate corrector.

\begin{proof}[Proof of Theorem~\ref{thm:quant-ergo}]
Without loss of generality, assume $\norm{\psi}_\infty=1$ and $\bar\psi=0$.

 First, we will construct a martingale (for both continuous and discrete time cases) using the approximate corrector. 

For any fixed $T>1$, let $\phi:\Omega\to\R$ denote the function
\[
\phi(\omega)=\phi_{\psi,T}(\omega):=\apk(0;\psi,\sqrt T,\omega),
\]
where $\apk$ is as in \eqref{eq:def-phihat}. Then, for a.s. $\omega\in\Omega$, the process $(M_t)_{t\ge 0}$ defined by
\begin{align}
\label{eq:mart-constr}
M_t:&=\phi(\theta_{Y_t}\omega)-\phi(\theta_{Y_0}\omega)-\int_0^t L_\omega\phi(\theta_{Y_s}\omega)\dd s\nn\\
&\stackrel{\eqref{eq:eq_of_phihat}}
=\phi(\evp{t})-\phi(\evp 0)-\int_0^t [\tfrac 1T\phi(\evp s)+\psi(\evp s)]\dd s
\end{align}
is a $P_\omega$-martingale with respect to the filtration $\ms F_t=\sigma(Y_s: s\le t)$. 
Similarly, for discrete-time RWRE,  we have that
\[
N_n:=\phi(\evp n)-\phi(\evp 0)-\sum_{i=0}^{n-1}[\tfrac 1T\phi(\evp i)+\psi(\evp i)]
\]
is a $P_\omega$-martingale with respect to the filtration $\ms F_n=\sigma(X_i: i\le n)$. 

Next, we will derive an exponential moment bounds for $\int_0^t P_s\psi\dd s$ and $\sum_{i=0}^{n}P_i\psi$,
where the operator $P_s$ is as in \eqref{eq:def-semigroup}.  We will only provide a proof for the continuous-time case,  because the argument for the discrete-time setting is exactly the same. 
 Since $E_\omega[M_s]=E_\omega[M_0]=0$,  taking expectations in \eqref{eq:mart-constr}, we get
\begin{align}\label{eq:221228-1}
\int_0^t P_s\psi\dd s=
P_t\phi-\phi
-\tfrac 1T\int_0^t P_s\phi \dd s.
\end{align}
Since the process $(\evp s)$ is a stationary sequence under the measure $\Q\times P_\omega$, we have,  by Jensen's inequality, for any $t\ge 0$,  $q\ge 1$, 
\begin{align*}
\norm{P_t\phi}_{L^q(\Q)}^q=E_{\Q}[|E_\omega\phi(\evp t)|^q]\le 
E_{\Q\times P_\omega}[|\phi(\evp t)|^q]=E_{\Q}[|\phi|^q].
\end{align*}
Hence, taking the $L^q(\Q)$-norms on both sides of \eqref{eq:221228-1},  we get
\[
\norm{\int_0^T P_s\psi\dd s}_{L^q(\Q)}\le 3\norm{\phi}_{L^q(\Q)}, \quad \forall q\ge 1
\]
which implies
\begin{align*}
&E_{\Q}\left[\exp\bigg(c\Abs{
\int_0^T P_s\psi\dd s
\big/
\mu(\sqrt T)
}^p\big)\right]\\
&\le 
E_{\Q}\left[\exp\bigg(c\Abs{3\phi/\mu(\sqrt{T})}^p\bigg)\right]\\
&\le 
\norm{\rho}_{L^2(\mb P)}E_{\mb P}\left[\exp\bigg(0.5c\Abs{3\phi/\mu(\sqrt{T})}^p\bigg)\right]^{1/2}
\stackrel{\eqref{item:old-rho-integ}}\le C,
\end{align*}
where we used H\"older's inequality in the second inequality.  

Note that $\nu(T)=T^{-1}\mu(\sqrt T)$ as defined in \eqref{eq:mu2}. 
 The theorem follows from the above moment bound and Chebyshev's inequality.
\end{proof}

\subsection{A Berry-Esseen estimate for the QCLT: Proof of Corollary~\ref{cor:quant-clt}}

To prove Corollary~\ref{cor:quant-clt} we will apply the Berry-Esseen estimates for martingales by Heyde and Brown \cite{HB-70}. Here we will use the version in \cite[Theorem 2]{EH-88} which is also applicable to the continuous-time setting.
\begin{proof}[Proof of Corollary~\ref{cor:quant-clt}]
For any unit vector $\ell\in\R^d$,  let $\psi_0(\omega)=\ell^T\tfrac{\omega(0)}{\tr\omega(0)}\ell$,  $\psi=\psi_0-E_{\Q}[\psi_0]$.  Following the notations in \cite{EH-88},  we set
\begin{align*}
N_{n,2}:&=
\qe\left[
\Abs{
\sum_{k=0}^{n-1}E_\omega\big[\tfrac1{\sqrt n}\big((X_{k+1}-X_k)\cdot\ell\big)^2
|\ms F_k\big]
-\ell^T\bar a\ell
}^{2}
\right]\\
&=
\frac{1}{n^2}\qe\left[
\big(
\sum_{k=0}^{n-1}\psi(\evp k)
\big)^{2}
\right],
\end{align*}
\[
L_{n,2}:=
\sum_{k=0}^{n-1}E_\omega[
\abs{\tfrac{1}{\sqrt n}
(X_{k+1}-X_k)\cdot\ell
}^4
]
=
\frac{1}{n^2}
\qe
\left[
\sum_{k=0}^{n-1}\psi_0(\evp k)
\right].
\]
The term $N_{n,2}$ can be further written as
\begin{align*}
n^2 N_{n,2}
= 
2\sum_{i=0}^{n-1}E_\omega\left[\psi(\bar\omega^i)\sum_{j=0}^{n-i-1}\psi(\bar\omega^{i+j})\right]
=
2\sum_{i=0}^{n-1}E_\omega\left[\psi(\bar\omega^i)\qe^{X_i}\left[\sum_{j=0}^{n-i-1}\psi(\bar\omega^{j})\right]\right].
\end{align*}
Hence, for any $q\ge 1$, using the fact that $(\evp i)$ is a stationary sequence under $\Q\times P_\omega$, we get (note $\norm{\psi_0}_\infty\lesssim 1$)
\begin{align*}
\norm{N_{n,2}}_{L^q(\Q)}
\lesssim
\frac{1}{n^2}\sum_{i=0}^{n-1}
\norm{
\sum_{j=0}^{n-i-1}P_j\psi
}_{L^q(\Q\times P_\omega)}
\end{align*}
which, by Jensen's inequality and the fact $\tfrac1{n^2}\sum_{k=1}^n\mu(\sqrt k)\asymp\nu(n)$, implies that for any $0<p<\tfrac{2d}{3d+2}$,
\begin{align*}
&E_{\Q}
\left[
\exp\big(
c\abs{
N_{n,2}/\nu(n)}^p
\big)
\right]
\\&
\lesssim
\frac{1}{n^2\nu(n)}
\sum_{k=1}^{n}\mu(\sqrt k)
E_\Q\left[
\exp\big(c\Abs{
\sum_{j=0}^{k-1}P_j\psi
\big/\mu(\sqrt k)}^p
\big)
\right]
\stackrel{Theorem~\ref{thm:quant-ergo}}{\le}C.
\end{align*}
Thus,  using the moment bound of $\rho^{-1}$ in Theorem~\ref{thm:hk-bounds}, by H\"older's inequality,
\[
E_{\mb P}
\left[
\exp\big(
0.5c\abs{
N_{n,2}/\nu(n)}^p
\big)
\right]
\le 
\norm{\rho^{-1/2}}_{L^2(\mb P)}
E_{\Q}
\left[
\exp\big(
c\abs{
N_{n,2}/\nu(n)}^p
\big)
\right]^{1/2}
\le C.
\]
By Theorem~\ref{thm:quant-ergo} we already know that $E_{\mb P}[\exp\big(c|nL_{n,2}|^p\big)]\le C$. Therefore, we conclude that there exists a random variable $\ms Y^5$ with $E_{\mb P}[\exp({\ms Y}^{5p})]<\infty$ such that 
\[
L_{n,2}+N_{n,2}\le C\nu(n)\ms Y^5.
\]
The corollary follows by applying \cite[Theorem 2]{EH-88}.
\end{proof}

\section{Homogenization of the Dirichlet problem}\label{sec:quant_hom}
In this section we will investigate the rate of convergence of the solution of the Dirichlet problem \eqref{eq:elliptic-dirich}.  With $a$ as defined in \eqref{eq:def-a},  problem \eqref{eq:elliptic-dirich} is equivalent to
\[
\left\{
\begin{array}{lr}
L_\omega u(x)=\tfrac 12\tr\big(a(x)\nabla^2u(x)\big)=\frac{1}{R^2}f\big(\tfrac{x}{R}\big)\tfrac{\zeta(\theta_x\omega)}{\tr\omega(x)} & x\in B_R,\\[5 pt]
u(x)=g\big(\tfrac{x}{|x|}\big) & x\in \partial B_R,
\end{array}
\right.
\]

Throughout this section $\psi:\Omega\to\R$ always denotes an $L^\infty(\mb P)$ bounded measurable function of $\omega(0)$.  With abuse of notation write $\psi(x)=\psi_\omega(x):=\psi(\theta_x\omega)$.

\begin{definition}\label{def:loc-global}
A function $\phi=\phi_\omega:\Z^d\to\R$ is called a {\it local corrector} of the Dirichlet problem \eqref{eq:elliptic-dirich} (associated to $\psi$) if it satisfies 
\begin{equation}\label{local-corrector-eq}
L_\omega\phi(x)=\psi(x)-\mb E_\Q[\psi]
\quad\text{ for }x\in B_R.
\end{equation}  
A function $\phi=\phi_\omega:\Z^d\to\R$ is called a (global) {\it corrector} of the operator $L_\omega$ (associated to $\psi$) if it satisfies 
\begin{equation}\label{global-corrector-eq}
L_\omega\phi(x)=\psi(x)-\mb E_\Q[\psi]
\quad\text{ for all }x\in\Z^d.
\end{equation}  
\end{definition}

Recall that by \cite[Corollary~7]{GT-22}, when $d\ge 5$, the (global) corrector $\phi_\omega(x)$ exists. Moreover, the corrector $\phi_\omega$ for $d\ge 5$ is {\it stationary} in the sense that 
$\phi_\omega(0)$ and $\phi_\omega(x)$ have the same distribution under $\mb P$ for all $x\in\Z^d$.
\subsection{The two-scale expansion}
Using the classical method two-scale expansion, we will compare the solutions $u$, $\bar u$ of the heterogeneous equation \eqref{eq:elliptic-dirich} and the effective equation \eqref{eq:effective-ellip}. 

\begin{lemma}
\label{lem:two-scale-exp}
Recall $a$ and $\bar a$ as in \eqref{eq:def-a}, \eqref{eq:def-abar}. 
Let $\omega\in\Omega$, $R>0$.  Let $u$ and $\bar u$ be the solutions of \eqref{eq:elliptic-dirich}, \eqref{eq:effective-ellip}, respectively.  
Let $v^k,\xi:\Z^d\to\R$ be functions that satisfy
\begin{align}\label{eq:def-vk}
L_\omega v^k(x)&=\tfrac12(a_k(x)-\bar a_k), \quad \text{ for } x\in B_R, k=1,\ldots,d\\
L_\omega\xi(x)&=\tfrac{\zeta(\theta_x\omega)}{\tr\omega(x)}-E_\Q[\tfrac{\zeta(\theta_x\omega)}{\tr\omega(x)}] \quad \text{ for } x\in B_R.\nn
\end{align}
That is,  $v^k,\xi$ are local correctors associated to $\tfrac12a_k(x)=\omega(x,x+e_k)$ and $\tfrac{\zeta(\theta_x\omega)}{\tr\omega(x)}$, respectively.  
Then 
\begin{align*}
&\max_{x\in B_R}|u(x)-\bar{u}(\tfrac xR)|\\
&\lesssim
\tfrac 1R\norm{\bar u}_{C^4(\bar{\B}_1)}[\sum_{k=1}^d(
\norm{\nabla v^k}_{d;B_R}+
\tfrac{1}{R}\osc_{\bar B_R}v^k)+\norm{\nabla \xi}_{d;B_R}+\tfrac{1}{R}\osc_{\bar B_R}\xi+\tfrac1R].
\end{align*}
\end{lemma}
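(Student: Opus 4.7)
The natural approach is the classical method of two-scale expansions, adapted to the discrete non-divergence setting. I would begin by defining the corrected test function
\[
w(x) := \bar u(x/R) - \frac{1}{R^2}\sum_{k=1}^d v^k(x)\,\partial_{kk}^2\bar u(x/R) + \frac{1}{R^2}\xi(x) f(x/R), \qquad x\in\bar B_R,
\]
after first normalizing $v^k$ and $\xi$ by additive constants so that $\|v^k\|_{\infty;\bar B_R}\le\osc_{\bar B_R}v^k$ and $\|\xi\|_{\infty;\bar B_R}\le\osc_{\bar B_R}\xi$. The signs of the two correction terms are chosen precisely so that the corrector equations $L_\omega v^k = \tfrac12(a_k-\bar a_k)$ and $L_\omega\xi = \zeta/\tr\omega-\bar\psi$ kill the leading discrepancy between $L_\omega u$ and $L_\omega\bar u(x/R)$.

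Next, I would compute $L_\omega w$ term-by-term using the discrete product rule
\[
L_\omega(\phi\psi) = \phi L_\omega\psi + \psi L_\omega\phi + \tfrac12\sum_k a_k(x)\bigl[\nabla_{e_k}\phi\,\nabla_{e_k}\psi + \nabla_{-e_k}\phi\,\nabla_{-e_k}\psi\bigr],
\]
combined with the second-order Taylor expansion $\nabla_k^2[\bar u(\cdot/R)](x) = R^{-2}\partial_{kk}^2\bar u(x/R)+O(R^{-4}\|\bar u\|_{C^4})$. Applying the corrector equations inside $B_R$ and invoking the effective equation $\tfrac12\tr(\bar a D^2\bar u)=f\bar\psi$, the $O(R^{-2})$ contributions to $L_\omega w$ cancel exactly against $L_\omega u = R^{-2}f(x/R)\zeta(\theta_x\omega)/\tr\omega(x)$, leaving a pointwise residual of the form
\[
|L_\omega(u-w)(x)| \lesssim \frac{\|\bar u\|_{C^4}}{R^3}\Bigl(\sum_k|\nabla v^k(x)|+|\nabla\xi(x)|\Bigr) + \frac{\|\bar u\|_{C^4}}{R^4}\Bigl(\sum_k|v^k(x)|+|\xi(x)|+1\Bigr).
\]
The first group arises from the product-rule cross terms, where each discrete gradient of $\bar u(\cdot/R)$ or $f(\cdot/R)$ contributes a factor of $R^{-1}$ compared to its smooth counterpart; the second from the Taylor remainder of $L_\omega[\bar u(\cdot/R)]$ together with $v^k L_\omega[\partial_{kk}\bar u(\cdot/R)]$ and $\xi L_\omega[f(\cdot/R)]$. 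All norms $\|f\|_{C^j}$ with $j\le 2$ are absorbed into $\|\bar u\|_{C^4}$ via the effective equation.

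I would then invoke the discrete ABP maximum principle for $u-w$ on $B_R$,
\[
\max_{B_R}|u-w|\le\max_{\partial B_R}|u-w| + CR^2\|L_\omega(u-w)\|_{d;B_R},
\]
and combine it with the triangle inequality $|u-\bar u(x/R)| \le |u-w|+|w-\bar u(x/R)|$. The interior term $R^2\|L_\omega(u-w)\|_{d;B_R}$ produces, after plugging in the residual and using $\|v^k\|_d\le\osc v^k$ and $\|\xi\|_d\le\osc\xi$, precisely the combination $R^{-1}\|\bar u\|_{C^4}\bigl[\sum_k(\|\nabla v^k\|_d+R^{-1}\osc v^k)+\|\nabla\xi\|_d+R^{-1}\osc\xi+R^{-1}\bigr]$. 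The boundary error is controlled by the discretization $|g(x/|x|)-\bar u(x/R)|\lesssim R^{-1}\|\bar u\|_{C^1}$ on $\partial B_R$ together with $R^{-2}\|\bar u\|_{C^4}(\sum\osc v^k+\osc\xi)$ from the $v^k,\xi$ corrections, and $|w-\bar u(x/R)|$ is handled by the same estimate in the interior.

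The main technical obstacle is the careful bookkeeping of the product-rule cross terms: one must extract the full factor of $R^{-1}$ from the discrete gradient of $\bar u(\cdot/R)$ (or $f(\cdot/R)$) in each cross term so that the residual is proportional to $|\nabla v^k|$ rather than to $|v^k|$ — only then does the ABP multiplier $R^2$ yield the optimal scales. A secondary subtlety is that the local correctors $v^k$ and $\xi$ solve their equations \eqref{eq:def-vk} only inside $B_R$, so no information outside is available; however, the ABP estimate is purely interior and only the oscillation of $v^k,\xi$ over $\bar B_R$ enters, which is exactly the norm appearing in the stated bound.
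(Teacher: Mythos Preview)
Your proposal is correct and follows essentially the same approach as the paper: define the two-scale ansatz (the paper writes $w=u-\text{your }w$, so $u-w$ in your notation is precisely the paper's $w$), apply the discrete product rule and the corrector equations to show the leading $R^{-2}$ terms cancel, bound the residual $|L_\omega(u-w)|$ pointwise by $R^{-3}\|\bar u\|_{C^4}(|\nabla v^k|+|\nabla\xi|)+R^{-4}\|\bar u\|_{C^4}(|v^k|+|\xi|+1)$, and conclude via ABP plus the boundary discretization error. The normalization of $v^k,\xi$ by additive constants, the handling of the boundary term $|g(x/|x|)-\bar u(x/R)|\lesssim R^{-1}\|\bar u\|_{C^1}$, and the absorption of $\|f\|_{C^2}$ into $\|\bar u\|_{C^4}$ via the effective equation all match the paper's proof.
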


The proof is similar to that in the periodic setting (see \cite{GTY-19, ST-21, GST-22} for example).
The only differences are that we need the ``environment corrector" $\xi$,  and we use the local corrector $v^k$ here instead of the global corrector (whose existence is guaranteed in the periodic setting).

\begin{proof}
We can replace the function $g$ in  \eqref{eq:elliptic-dirich} by $\bar u$,  because doing this only introduces an error of size $CR^{-1}\norm{\bar u}_{C^4(\bar{\B}_1)}$ to $|u(x)-\bar u(\tfrac{x}{R})|$.

Without loss of generality, assume $v^k(0)=0$ for all $k=1,\ldots,d$. Let 
\[
\psi_\omega(x):=\tfrac{\zeta(\theta_x\omega)}{\tr\omega(x)}, \quad x\in\Z^d.
\]
Consider the function
\begin{equation}\label{eq:def-w}
w(x)=u(x)-\bar u(\tfrac{x}{R})
+\tfrac{1}{R^2}v^k(x)\partial_{kk}\bar u(\tfrac{x}{R})
-\tfrac{1}{R^2}f(\tfrac{x}R)\xi(x),
\quad x\in\bar B_R,
\end{equation}
where we use the convention of summation over repeated indices. 
Note that 
\begin{align*}
L_\omega u(x)
&=\tfrac1{R^2}f(\tfrac xR)(\psi(x)-\bar\psi)+\tfrac1{R^2}f(\tfrac xR)\bar\psi\\
&=\tfrac1{R^2}\big[f(\tfrac xR)L_\omega\xi(x)+
\tfrac1{2}\bar{a}_k\partial_{kk}\bar u(\tfrac{x}{R})\big]
\end{align*}
 and $\abs{L_\omega[\bar u(\tfrac{x}{R})]-\tfrac1{2R^2}\tr[a(x) D^2\bar{u}(\tfrac xR)]}\lesssim\tfrac1{R^4}\norm{\bar u}_{C^4}$.
Then, applying formula
\[
L_\omega(uv)=uL_\omega v+vL_\omega u+\sum_{y:y\sim x}\omega(x,y)[u(y)-u(x)][v(y)-v(x)]
\]
to the last two terms of \eqref{eq:def-w}, we get,  for any $x\in B_R$,
\begin{align*}
&\abs{L_\omega w(x)}\\
&=
\tfrac1{R^2}\Abs{
f(\tfrac xR)L_\omega\xi(x)+\tfrac1{2}\bar{a}_k\partial_{kk}\bar u(\tfrac{x}{R})-R^2L_\omega[\bar{u}(\tfrac xR)]
+\tfrac1{2}(a_k-\bar a_k)\partial_{kk}\bar u(\tfrac{x}{R})
\nn\\&
+v^kL_\omega[\partial_{kk}\bar u(\tfrac{x}{R})]
+\sum_{y\sim x}\omega(x,y)[\partial_{kk}\bar{u}(\tfrac{y}{R})-\partial_{kk}\bar u(\tfrac{x}{R})][v^k(y)-v^k(x)]
\nn\\
&-f(\tfrac{x}R)L_\omega\xi(x)-L_\omega[f(\tfrac{x}R)]\xi(x)
-\sum_{y\sim x}\omega(x,y)[f(\tfrac{y}{R})-f(\tfrac{x}{R})][\xi(y)-\xi(x)]
}
\nn\\
&\lesssim \tfrac1{R^2}\Abs{
v^kL_\omega[\partial_{kk}\bar u(\tfrac{x}{R})]
+\sum_{y\sim x}\omega(x,y)[\partial_{kk}\bar{u}(\tfrac{y}{R})-\partial_{kk}\bar u(\tfrac{x}{R})][v^k(y)-v^k(x)]\nn\\
&-L_\omega[f(\tfrac{x}R)]\xi(x)-\sum_{y\sim x}\omega(x,y)[f(\tfrac{y}{R})-f(\tfrac{x}{R})][\xi(y)-\xi(x)]
}+\tfrac1{R^4}\norm{\bar u}_{C^4}
\nn\\
&\lesssim
\tfrac1{R^3}\norm{\bar u}_{C^4(\bar\B_1)}\sum_{k=1}^d\big(
\tfrac1{R}|v^k(x)|+\tfrac1{R}|\xi(x)|+|\nabla v^k(x)|+|\nabla \xi(x)|+\tfrac1R
\big).
\end{align*}	
Thus, by the above inequality, \eqref{eq:def-w}, and the ABP maximum principle,
\begin{align*}
\max_{B_R}|w|
&\lesssim
R^2\norm{L_\omega w}_{d;B_R}+\tfrac1{R^2}\max_{x\in\partial B_R}|v^k(x)\partial_{kk}\bar u(\tfrac{x}{R})-f(\tfrac{x}R)\xi(x)|\\
&\lesssim
\tfrac1{R^2}\norm{\bar u}_{C^4(\bar\B_1)}\sum_{k=1}^d
\big(
\norm{|v^k|+|\xi|}_{d;B_R}+R\norm{|\nabla v^k|+|\nabla\xi|}_{d;B_R}\\&+1+\max_{\partial B_R}(|v^k|+|\xi|)
\big).
\end{align*}
The lemma follows from the above inequality and \eqref{eq:def-w}.
\end{proof}

\begin{remark}
 By Lemma~\ref{lem:two-scale-exp}, to control the homogenization error, it suffices to control the size of local correctors and their discrete gradients.

When $d\ge 5$,  by  \cite[Corollary~7]{GT-22}, the stationary corrector $\phi$ exists.
Hence it is not surprising that the generically optimal rate $R^{-1}$ of homogenization can be achieved  for $d\ge 5$.

Unfortunately,  for $d<5$,  there is no stationary corrector at our disposition.
Although (as demonstrated in the first arXiv version \cite{GT-23-1} of this paper) one may use the approximate correctors $\apk$  together with the two-scale expansion argument to quantify the homogenization,  the $C^{0,1}$ estimate in Theorem~\ref{thm:c11} only yields a bound for $\nabla\apk$ with size $\sqrt{\mu(R)}$, cf. Corollary~\ref{cor:c01-apk}, which is not good enough for us to obtain optimal rates. 
 
In this regard,  
it is not the size of the corrector,  but its $C^{0,1}$ regularity that is posing the biggest challenge in the course of obtaining the optimal  homogenization rates.
 To resolve this issue,  our strategy is to construct a local corrector and use sensitivity estimates as in \eqref{eq:w-appr-gf} to obtain optimal bounds for its gradients.  Firstly, note that such an argument does not work for $\apk$.  Indeed,  bounding $\vd y\nabla\apk$ (cf.  \eqref{eq:w-appr-gf}) involves estimating $\nabla\apg_\omega(\cdot,y)$. However, $\apg_\omega(\cdot,y)$, which solves
 \[
L_\omega \apg(\cdot,y)=\tfrac{1}{R^2}\apg(\cdot,y)-\mathbbm1_y,
\]
is nowhere $\omega$-harmonic, and so our $C^{0,1}$ theory (Theorem~\ref{thm:c11}) is not applicable to obtain a desired bound for $\nabla\apg_\omega(\cdot,y)$.
Secondly,  it might be tempting to construct a local corrector by solving a Dirichlet problem in a finite region.  For instance, one may solve \eqref{local-corrector-eq} by imposing a zero boundary condition on $\partial B_R$.  However, an obvious defect for such a construction is that 
 $C^{0,1}$ and $C^{1,1}$ bounds (using Theorem~\ref{thm:c11}) near the boundary blows up, which makes an optimal estimate impossible.  Intuitively, this is due to the fact that the RWRE which starts near the boundary cannot survive long enough (before hitting the boundary) to ``feel" the homogenization.


To overcome this challenge, we will construct a local corrector such that (1) the corresponding Green function is $\omega$-harmonic in $B_{2R}$; (2) the corresponding RWRE survives a time of scale $\approx R^2$. 
\end{remark}

\subsection{The construction of a local corrector}
Recall the continuous-time random walk $(Y_t)_{t\ge 0}$ in Definition~\ref{def:rwre-continuous}. 
Let $\psi(\omega)=\psi(\omega(0))$ be a function of $\omega(0)$. 
Write $\psi_\omega(x)=\psi(\theta_x\omega)$. 

\begin{definition}\label{def:new-corrector}
For $R>1,\omega\in\Omega$, and any function $\eta:\Z^d\to[0,1]$ with the property $\eta(x)=1$ for all $ x\notin{B_{3R}}$, let the function  $\lock=\lock_{\omega,R}(x;\eta,\psi):\Z^d\to\R$ be the solution of
\begin{equation}\label{eq:def-lock}
\left\{
\begin{array}{lr}
L_\omega\lock=\tfrac{1}{R^2}\lock\eta+\psi_\omega-E_\Q[\psi] &\text{in }\Z^d,\\
\lock \text{ is bounded; that is, }|\lock|\le M=M(\omega,S,d,\kappa)<\infty. &
\end{array}
\right.
\end{equation}
With abuse of notation, if $R$ (or $\omega$) is fixed,  we may  simply write $\lock_{\omega,R}$ as $\lock_\omega$ (or $\lock_R$)
whenever confusion does not occur.
\end{definition}

Note that when $\eta|_{B_R}=0$,  the function $\lock_\omega(x;\eta,\psi)$ becomes a local corrector,  according to Definition~\ref{def:loc-global}. 
When $\eta\equiv 1$,  then $\lock_\omega(x;\eta,\psi)=\apk(x)$ is the approximate corrector. 

The existence and uniqueness of $\lock$ will be established in Proposition~\ref{prop:exist-unique-lock}.

\begin{proposition}
\label{prop:exist-unique-lock}
For any $R>1, \omega\in\Omega$, any bounded functions $f:\Z^d\to\R$ and $\eta:\Z^d\to[0,1]$ with $\eta|_{\Z^d\setminus B_{3R}}=1$,   there exists a unique solution $u$ to the problem
\begin{equation}\label{eq:local-func}
\left\{
\begin{array}{lr}
L_\omega u=\tfrac{1}{R^2} u\eta+f &\text{in }\Z^d,\\
u\text{ is bounded; that is, }|u|\le M=M(\omega,S,d,\kappa)<\infty. &
\end{array}
\right.
\end{equation}
\end{proposition}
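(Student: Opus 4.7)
I would produce the unique bounded solution via the Feynman--Kac formula associated to $(Y_t)$ killed at rate $\eta(Y_t)/R^2$. Writing $A(t):=\int_0^t \eta(Y_s)/R^2\,\dd s$, the candidate is
\[
u(x) \;=\; -E_\omega^x\!\left[\int_0^\infty f(Y_t)\,e^{-A(t)}\,\dd t\right],
\]
and uniqueness is handled by a martingale/optional-stopping argument applied to $M_t := w(Y_t)e^{-A(t)}$, where $w$ is the difference of two bounded solutions.

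\textbf{Existence.} It suffices to prove that $v(x):=E_\omega^x[\int_0^\infty e^{-A(t)}\,\dd t]$ is bounded on $\Z^d$; then $|u|\le\|f\|_\infty v$, and the Markov property together with a routine short-time Dynkin computation shows $u$ satisfies \eqref{eq:local-func}. The key step is to construct a bounded nonnegative supersolution $W$ satisfying $(L_\omega-\eta/R^2)W\le -1$. Let $w_2$ solve the Dirichlet problem $L_\omega w_2=-1$ in $B_{3R}$ with $w_2=0$ on $\partial B_{3R}$, extended by $0$ outside $\bar B_{3R}$; by ABP, $0\le w_2\le CR^2$. Set $W(x):=AR^2+w_2(x)$ with $A=A(\omega,R)$ chosen large enough that $A\ge 1+\max_{\partial B_{3R}} L_\omega w_2$. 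A three-region case check then verifies the supersolution inequality: on $B_{3R}$, $L_\omega W=-1$ and $\eta W/R^2\ge 0$; on $\partial B_{3R}$, $\eta\equiv 1$, $w_2=0$, so $(L_\omega-\eta/R^2)W=L_\omega w_2 - A\le -1$; on $\Z^d\setminus\bar B_{3R}$, every neighbor of $x$ lies in $\Z^d\setminus B_{3R}$ where $w_2=0$, so $L_\omega W=0$ and $(L_\omega-\eta/R^2)W=-A\le -1$. Introduce the Dirichlet truncations $v_N$ solving $(L_\omega-\eta/R^2)v_N=-1$ in $B_N$ with $v_N=0$ on $\partial B_N$; these exist and satisfy $v_N(x)=E_\omega^x[\int_0^{\tau(B_N)}e^{-A(t)}\,\dd t]\ge 0$. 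The discrete maximum principle applied to $v_N-W$ on $B_N$ yields $v_N\le W$, and monotone convergence as $N\to\infty$ gives $v\le W$, hence $v$ is bounded on $\Z^d$.

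\textbf{Uniqueness and main difficulty.} If $u_1,u_2$ are two bounded solutions with $|u_i|\le M$, set $w=u_1-u_2$. Then $|w|\le 2M$ and $L_\omega w=\eta w/R^2$ on $\Z^d$, and Dynkin's formula shows $M_t:=w(Y_t)e^{-A(t)}$ is a bounded $P_\omega^x$-martingale, so $w(x)=E_\omega^x[M_t]$ for every $t\ge 0$. Sending $t\to\infty$ and using $|M_t|\le 2M e^{-A(t)}\to 0$ together with dominated convergence gives $w\equiv 0$; the limit $A(t)\to\infty$ holds $P_\omega^x$-a.s.\ because $\eta\ge\mathbbm 1_{\Z^d\setminus B_{3R}}$ and the uniformly elliptic walk spends an unbounded amount of time outside any fixed bounded set. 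The only genuinely delicate point in this plan is the supersolution construction: the killing term $-\eta v/R^2$ offers no damping inside $B_{3R}$ because $\eta$ may vanish there, so the $-1$ source has to be absorbed by the Dirichlet corrector $w_2$, while the large constant $AR^2$ is needed to compensate both for the killing in $\Z^d\setminus B_{3R}$ and for the positive defect $L_\omega w_2\ge 0$ that the zero-extension of $w_2$ produces on the boundary layer $\partial B_{3R}$.
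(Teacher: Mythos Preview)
Your proof is correct and complete, but it follows a different route than the paper's.

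For existence, the paper works with the \emph{discrete-time} walk $(X_n)$ and an explicit killing time $T$ defined via site-dependent Bernoulli clocks with parameter $\tilde\eta(x)=\eta(x)/(R^2+\eta(x))$; the solution is then given directly by the formula $u(x)=-E_\omega^x\big[\sum_{n=0}^{T}(1-\tilde\eta(X_n))f(X_n)\big]$, and boundedness reduces to the estimate $E_\omega^x[T]\lesssim R^2$, which is proved separately by a level-crossing argument (Lemma~\ref{lem:stoppingtime}). Your approach instead uses the continuous-time Feynman--Kac representation with multiplicative killing $e^{-A(t)}$, and you obtain boundedness by constructing a barrier $W=AR^2+w_2$ and invoking the maximum principle on finite-volume truncations. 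Your method is more PDE-flavored and entirely self-contained within the proof, whereas the paper's is more combinatorial/probabilistic and defers the quantitative bound on $E_\omega^x[T]$ to a later lemma; on the other hand, the paper's stopping-time framework is reused heavily in the subsequent Green-function estimates (Proposition~\ref{prop:new-corr-prop}), so their choice is natural in context.

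For uniqueness, the paper argues by contradiction: if a bounded $w$ with $L_\omega w=\eta w/R^2$ were positive somewhere, one walks along an increasing sequence $x_n\notin\{\eta=0\}$ and shows that once outside $B_{3R}$ the values grow geometrically, $w(x_{n+1})\ge(1+R^{-2})w(x_n)$, contradicting boundedness. Your martingale argument via $M_t=w(Y_t)e^{-A(t)}$ is cleaner and more standard; the one point you pass over quickly is that $A(t)\to\infty$ almost surely, which does require the observation that the balanced walk exits $B_{3R}$ in finite expected time (e.g.\ via the martingale $|X_n|^2-n$) and hence accumulates unbounded occupation time outside $B_{3R}$. This is straightforward but worth one sentence.
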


Our proof of the existence is constructive. 

Let us explain the probabilistic intuition behind the above definitions.  Set 
\[
\tilde \eta(x)=\frac{\eta(x)}{R^2+\eta(x)}\in[0,\tfrac{1}{R+1}], \quad x\in\Z^d.
\]
Imagine at every site $x\in\Z^d$ there is a clock which rings every geometric (with parameter $\tilde \eta(x)$) units of time,  and let $T$ be the first time that the discrete-time  random walk $(X_n)$ hears the clock (at its current location) rings.  
To be rigorous,  let $\{\bern_n(x):n\in\N, x\in\Z^d\}$ be a family of independent Bernoulli random variables with $P(\bern_n(x)=1)=1-P(\bern_n(x)=0)=\tilde \eta(x)$.  Define the stopping time $T$ as
\begin{equation}\label{eq:def-stoppingtime}
T=T(\eta)=\inf\{n\ge 0: \bern_n(X_n)=1\}.
\end{equation}
Clearly $E_\omega^x[T]<\infty$, $\forall x\in\Z^d$. 
One can check that problem \eqref{eq:local-func} has a solution 
\begin{equation}\label{eq:phi-expression}
u(x):=-E_\omega^x\left[\sum_{n=0}^T \big(1-\tilde \eta(X_n)\big)f(X_n)\right].
\end{equation}

For any $y\in\Z^d$, define the Green function corresponding to problem \eqref{eq:local-func} by 
\[
\locg_\omega(x,y)=\locg_{\omega}(x,y;\eta,R)=E_\omega^x\left[\sum_{n=0}^T \big(1-\tilde \eta(y)\big)\mathbbm1_{X_n=y}\right].
\]
Note that $(1-\tilde \eta)\asymp 1$, and so
$\locg_{\omega}(x,y)$ is roughly the expected amount of time that the discrete-time random walk (starting from $x$) spent at $y$ before the clock $T$ rings.  By formula \eqref{eq:phi-expression}, 
$\locg_{\omega;y}(\cdot)=\locg_{\omega;y}(\cdot;\eta,R):=\locg_\omega(\cdot,y;\eta,R)$ solves the equation 
\begin{equation}\label{eq:LG}
L_\omega \locg_{\omega;y}=\tfrac1{R^2} \locg_{\omega;y}\eta-\mathbbm{1}_{y} \quad\text{ in }\Z^d.
\end{equation}

Then,  the solution \eqref{eq:phi-expression} of problem \eqref{eq:local-func}  can be written as
\begin{equation}\label{eq:phi-in-green}
u(x)=-\sum_y \locg_\omega(x,y)f(y).
\end{equation}

\begin{proof}
[Proof of Proposition~\ref{prop:exist-unique-lock}]
A solution to \eqref{eq:local-func} is constructed as in \eqref{eq:phi-expression}. 

To prove the uniqueness, it suffices to show that $u\equiv 0$ is the only solution of 
\[
\left\{
\begin{array}{lr}
L_\omega u=\tfrac{1}{R^2} u\eta&\text{in }\Z^d,\\
u\text{ is bounded. }&
\end{array}
\right.
\]
We will prove this by contradiction.

Assume there is a solution $u$ of the above problem with $u\not\equiv 0$, say, $\sup_{\Z^d}u>0$.

 First,  letting $S=\{x:\eta(x)=0\}\subset B_{3R}$,  then $u$ is $\omega$-harmonic on $S$. 
By the ABP maximum principle,  $\max_S u\le \max_{\partial S}u$ and so
there exists $x_0\notin S$ with $u(x_0)>0$ and $u(x_0)\ge\max_S u$. Since $L_\omega u(x_0)=E^{x_0}_\omega[u(X_1)-u(x_0)]>0$, there exists a neighboring point $x_1\notin S$ of $x_0$ with $u(x_1)=\max_{|y-x_0|=1}u(y)>u(x_0)$.  Repeating this argument, we obtain an infinite sequence of points 
 $\{x_n:n\in\N\}\subset\Z^d\setminus S$ with $u(x_n)=\max_{|y-x_{n-1}|=1}u(y)>u(x_{n-1})>0$ for all $n\ge 1$. 

Next,  whenever $x_n\notin B_{3R}$, since 
\[
\tfrac1{R^2}u(x_n)=L_\omega u(x_n)=E^{x_n}_\omega[u(X_1)-u(x_n)]\le u(x_{n+1})-u(x_n)
\]
which implies $u(x_{n+1})\ge (1+\tfrac1{R^2})u(x_n)$, 
we conclude that  $u(x_m)\ge (1+\tfrac{1}{R^2})^{m-n} u(x_n)>0$ for all $m\ge n$, which contradicts the property that $u$ is a bounded function. 

Therefore, $u\equiv 0$ and our proof is complete.
\end{proof}

By Proposition~\ref{prop:exist-unique-lock}, \eqref{eq:phi-expression} is the only solution to problem \eqref{eq:local-func}. 

\medskip

Define a sequence of exit times from balls centered at $x$ as
\begin{equation}\label{eq:def-tau-bigball}
\tau_0(x):=0, \quad \tau_k(x)=\inf\{t:X_n\notin B_{kR}(x)\}, \quad k\ge 1.
\end{equation}
We may simply write $\tau_k(0)$ as $\tau_k$.

The following are some properties of the stopping time $T$ defined in \eqref{eq:def-stoppingtime}.
\begin{lemma}
\label{lem:stoppingtime}
Let $R>1$. For any $\eta:\Z^d\to[0,1]$ with $\eta|_{\Z^d\setminus B_{3R}}=1$, let $\lock=\lock(x;\eta,\psi)$ and $T=T(\eta)$  be as defined in Definition~\ref{def:new-corrector} and  \eqref{eq:def-stoppingtime}..
\begin{enumerate}[(i)]
\item\label{item:stoptime-1} Recall $\tau_n(x)$ in \eqref{eq:def-tau-bigball}. For all $k\ge 1, x\in \Z^d$, 
\begin{equation}
\label{eq:comp-tau-T}
P_\omega^x(T>\tau_k(x))\lesssim e^{-ck}.
\end{equation}
\item\label{item:stoptime-2} $E^x_\omega[T]\lesssim R^2$.
\end{enumerate}
\end{lemma}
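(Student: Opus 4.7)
The plan is to derive both (i) and (ii) from a single uniform input: there exist constants $C_0,p_0>0$ depending only on $(d,\kappa)$ such that
\[
\inf_{x\in\Z^d}P^x_\omega(T\le C_0 R^2)\ge p_0.
\]
Granting this, the strong Markov property applied at the times $jC_0R^2$---using that the $\bern_n(\cdot)$ are i.i.d.\ in $n$ and independent of the walk---yields $P^x_\omega(T>jC_0R^2)\le (1-p_0)^j$, which immediately proves (ii) by summing $E^x_\omega[T]=\sum_n P^x_\omega(T>n)\lesssim R^2$ and simultaneously delivers the exponential tail $P^x_\omega(T>L)\le Ce^{-cL/R^2}$. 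For (i), I would pair this tail with a sub-Gaussian exit-time estimate for $\tau_k(x)$: Azuma's inequality applied coordinate-wise to the balanced martingale $X_n-x$ (whose increments are bounded by $1$) gives
\[
P^x_\omega(\tau_k(x)\le L)\lesssim e^{-c(kR)^2/L}.
\]
Then the split $\{T>\tau_k(x)\}\subset\{T>L\}\cup\{\tau_k(x)\le L\}$ with the choice $L=CkR^2$ forces both pieces to be $\lesssim e^{-ck}$.

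To produce the key uniform input, I would split on whether $x$ lies far from the region where $\eta$ can vanish. If $|x|>4R$, then $B_R(x)\subset \Z^d\setminus B_{3R}$, so $\tilde\eta\equiv 1/(R^2+1)$ throughout $B_R(x)$; standard balanced-walk computations (optional stopping on $|X_n-x|^2-n$, together with iterated Markov to get all moments of $\tau(B_R(x))$) give $E^x_\omega[\tau(B_R(x))]\gtrsim R^2$ and $E^x_\omega[\tau(B_R(x))^2]\lesssim R^4$, so Paley--Zygmund yields $P^x_\omega(\tau(B_R(x))\ge cR^2)\ge p_1>0$. Conditionally on this event, the independence of the Bernoullis from the walk bounds the probability of no clock tick in $cR^2$ steps by $(1-1/(R^2+1))^{cR^2}\le e^{-c/2}$, producing the desired uniform estimate. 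If instead $|x|\le 4R$, I would first use the elliptic exit bound $E^x_\omega[\tau(B_{10R}(0))]\lesssim R^2$ together with Markov's inequality to get $P^x_\omega(\tau(B_{10R}(0))\le CR^2)\ge 1/2$, and then invoke strong Markov at this exit time: any exit point $y^*\in\partial B_{10R}(0)$ automatically satisfies $|y^*|\ge 10R>4R$, so the previous case applies from $y^*$ and the two steps compose.

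The main obstacle is precisely this uniform-in-$x$ input. Because $\eta$ is permitted to vanish identically on all of $B_{3R}$, a walker starting deep inside $B_{3R}$ receives \emph{no} killing until it first reaches $\Z^d\setminus B_{3R}$, so one cannot hope for a clock tick on the short time scale $R^2$ without first driving the walker into the region of positive killing rate. The two-case analysis above is what handles this: it cleanly separates the ``driving'' step (exit from $B_{10R}(0)$, controlled by an elliptic exit-time estimate) from the ``killing'' step (Paley--Zygmund inside $B_R(y^*)$ combined with the Bernoulli no-success calculation), with strong Markov gluing them together.
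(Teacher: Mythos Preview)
Your argument is correct, but it is organized rather differently from the paper's proof, so a comparison is worthwhile.

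The paper proves (i) first and derives (ii) from it.  For (i) it iterates at the \emph{spatial} stopping times $\tau_{n_0k}(x)$: using Doob's $L^1$ maximal inequality it shows $P_\omega(\tau_{n_0}\le n_0R^2)\lesssim 1/\sqrt{n_0}$, so for $n_0$ large the walk stays inside $B_{n_0R}(x)$ for time $n_0R^2$ with positive probability, and if that ball misses $B_{3R}$ the geometric clock rings with probability $\ge 1-e^{-n_0}$.  Since at most two of the annuli $B_{(k+1)n_0R}(x)\setminus B_{(k-1)n_0R}(x)$ can meet $B_{3R}$, the shell-by-shell product gives \eqref{eq:comp-tau-T}.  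Then (ii) follows from the decomposition $T\le \tau_1+\sum_k(\tau_{k+1}-\tau_k)\mathbbm1_{T>\tau_k}$ together with $E^z_\omega[\tau_{k+1}(x)]\lesssim (kR)^2$.

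You instead isolate a single uniform estimate $\inf_xP^x_\omega(T\le C_0R^2)\ge p_0$ and iterate in \emph{time}: this gives the geometric tail of $T$ and hence (ii) directly, and then (i) follows from the split $\{T>\tau_k(x)\}\subset\{T>L\}\cup\{\tau_k(x)\le L\}$ with $L\asymp kR^2$, where you control the second piece by Azuma--Hoeffding applied coordinatewise to the balanced martingale $X_n-x$.  Your treatment of the zone where $\eta$ may vanish (first drive the walk out of $B_{10R}(0)$, then apply the outside case via strong Markov) plays the same role as the paper's ``at most two bad annuli'' observation.  For the ``walk lingers in a ball'' input you use Paley--Zygmund with $E[\tau]\asymp R^2$ and $E[\tau^2]\lesssim R^4$, whereas the paper uses Doob's $L^1$ inequality; both are standard.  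Your route is a bit more modular (the key input is a clean standalone statement) at the cost of one extra ingredient (the sub-Gaussian exit bound), while the paper's shell iteration is slightly more economical.
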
 

\begin{proof}
\noindent{\bf (i)} 
The intuition is as follows. From all but two spheres $\partial B_{kR}(x)$,  the RW has probability less than $1-c$ of not being killed by the exponential clock before reaching the next level $\partial B_{(k+1)R}(x)$. Hence, by iteration,  it would be exponentially hard to reach $\partial B_{kR}(x)$.

Indeed,  let $n_0\in\N$ be a constant to be determined later.
Since $(X_n)$ is a martingale on any $\omega\in\Omega$,  by Doob's inequality, for $k>0$,
\begin{align*}
P_\omega(\tau_{n_0}\le k R^2)
&\le 
\sum_{e:|e|=1}P_\omega(\sup_{n\le kR^2}X_n\cdot e\ge n_0 R/\sqrt{d})\\
&\le
\frac{\sqrt d}{n_0 R}
\sum_{e:|e|=1}E_\omega[(X_{kR^2}\cdot e)_+]\lesssim \frac{\sqrt{kR^2}}{n_0 R}
\lesssim
\frac{\sqrt k}{n_0}.
\end{align*}
Hence, when $n_0>8$ is chosen appropriately and $x$ satisfies $B_{n_0R}(x)\cap B_{3R}=\emptyset$,
\begin{align*}
P_\omega^x(\tau_{n_0}(x)<T)
&\le P_\omega^x(\tau_{n_0}(x)<n_0R^2)+P_\omega^x(T\ge n_0R^2)\\
&\le 
\tfrac{C\sqrt{n_0}}{n_0}+(1-\tfrac1{1+R^2})^{n_0R^2}<e^{-1},
\end{align*}
where we used the fact that $T$ behaves like a geometric random variable with parameter $\tfrac1{1+R^2}$ when we consider the RW outside of $B_{3R}$. For $k\ge 1$, whenever $\big(B_{(k+1)n_0R}(x)\setminus B_{(k-1)n_0R}(x)\big)\cap B_{3R}=\emptyset$, we have $B_{n_0R}(z)\cap B_{3R}=\emptyset$ for all $z\in\partial B_{kn_0R}(x)$ and thus
\begin{align*}
P_\omega^x(\tau_{(k+1)n_0}(x)<T|\tau_{kn_0}(x)<T)\le \max_{z\in\partial B_{n_0kR}}P_\omega^z(\tau_{n_0}(z)<T)<e^{-1}.
\end{align*}
Since for any $x\in\Z^d$, there are at most two $k$'s such that  $(B_{(k+1)n_0R}(x)\setminus B_{(k-1)n_0R}(x))\cap B_{3R}\neq\emptyset$, the above inequality implies, for all $k\ge 1$, $x\in\Z^d$
\[
P_\omega^x(\tau_{n_0k}(x)<T)
\le \prod_{j=1}^{k-1}P_\omega^x(\tau_{(j+1)n_0}(x)<T|\tau_{jn_0}(x)<T)
\le e^{3-k}.
\]
Inequality \eqref{eq:comp-tau-T} is proved.

\medskip

\noindent{\bf (ii) }
Since $(|X_n|^2-n)_{n\ge 0}$ is a $P_\omega$-martingale, by the optional stopping lemma one has 
$\qe^z[\tau_k]=E_\omega^z [|X_{\tau_k}|^2]-|z|^2\le (k+1)^2R^2$ for $z\in B_{kR}$.  
Further, observing that $T\le \tau_1+\sum_{k=1}^\infty(\tau_{k+1}-\tau_k)\mathbbm{1}_{T>\tau_k}$, by the strong Markov property and \eqref{eq:comp-tau-T},
\begin{align*}
E^x_\omega[T]
&\le E_\omega^x[\tau_1(x)]+\sum_{k=1}^\infty P_\omega^x(\tau_k(x)<T)\max_{z\in\partial B_{kR}(x)}E_\omega^z[\tau_{k+1}(x)]\\
&\lesssim
R^2+\sum_{k\ge 1}e^{-ck}(kR)^2\lesssim R^2.
\end{align*}

Our proof of Lemma~\ref{lem:stoppingtime} is complete.
\end{proof}

For $R\ge 1$,  we are interested in the set $H=H_R$ of functions defined by
\begin{equation}\label{eq:eta0}
H=\{
\eta\in[0,1]^{\R^d}: \eta|_{\R^d\setminus\B_{3 R}}=1, \eta|_{\B_{2R}}=0,
|D^i\eta|\lesssim\tfrac1{R^i}, \,i=1,2
\}
\end{equation}

We will derive the following estimates for local correctors $\lock_\omega(x;\eta,\psi), \eta\in H$.
\begin{lemma}
\label{lem:phi-bd-c11}
Let $R>1$. Recall $\lock=\lock_\omega(x;\eta,\psi)$ in  Definition~\ref{def:new-corrector} and  $H$ in \eqref{eq:eta0}.
\begin{enumerate}[(a)]
\item\label{item:new-corr-1} For any $\eta\in[0,1]^{\Z^d}$ with $\eta|_{\Z^d\setminus B_{3R}}=1$, we have
$\norm{\lock_\omega(x;\eta,\psi)}_\infty\lesssim R^2\norm{\psi}_\infty$.
\item\label{item:new-corr-2}
When $\eta\in H$, we have
 $|\nabla^2\lock_\omega(x;\eta,\psi)|\lesssim \tx_x^2\norm{\psi}_\infty$ for  all $x\in\Z^d, \eta\in H$.
\end{enumerate}
\end{lemma}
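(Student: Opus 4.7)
The plan for part \eqref{item:new-corr-1} is essentially immediate from the probabilistic representation \eqref{eq:phi-expression}, specialized to $f = \psi_\omega - E_\Q[\psi]$ (whose $L^\infty$ norm is at most $2\|\psi\|_\infty$), combined with the moment bound $E_\omega^x[T] \lesssim R^2$ from Lemma~\ref{lem:stoppingtime}\eqref{item:stoptime-2}. Since $1-\tilde{\eta}(\cdot) \in [0,1]$, this gives $|\lock(x)| \le 2\|\psi\|_\infty E_\omega^x[T+1] \lesssim R^2\|\psi\|_\infty$.

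For part \eqref{item:new-corr-2}, my strategy is to recast \eqref{eq:def-lock} in the form
\[
L_\omega \lock(y) = \tilde{\psi}(\theta_y\omega) + f(y), \quad y \in \Z^d,
\]
where $\tilde{\psi}(\omega) := \psi(\omega) - E_\Q[\psi]$ is a local function of $\omega(0)$ and $f(y) := \eta(y)\lock(y)/R^2$, and then apply the $C^{1,1}$ estimate \eqref{eq:c2} of Theorem~\ref{thm:c11} on the ball $B_{R/2}(x)$, routing $\tilde{\psi}$ through the ``local'' slot and $f$ through the external-forcing slot. The sizes of $\tilde{\psi}$ and $f$ are immediately controlled by part \eqref{item:new-corr-1}: $\|\tilde{\psi}\|_\infty \lesssim \|\psi\|_\infty$ and $\|f\|_\infty \lesssim \|\psi\|_\infty$, hence $\|\tilde{\psi} + f(x)\|_\infty \lesssim \|\psi\|_\infty$, while $\|\lock - \lock(x)\|_{1;B_{R/2}(x)} \le 2\|\lock\|_\infty \lesssim R^2\|\psi\|_\infty$ is trivial. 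The only non-trivial input is the H\"older seminorm $[f]_{\gamma;B_{R/2}(x)}$ with the Krylov-Safonov exponent $\gamma$: I will first use \eqref{eq:osc} on $B_R(x)$ together with part \eqref{item:new-corr-1} to deduce $[\lock]_{\gamma;B_{R/2}(x)} \lesssim R^{2-\gamma}\|\psi\|_\infty$, and then combine this with the Lipschitz bound $|\eta(y)-\eta(z)| \lesssim |y-z|/R$ (and $|\eta| \le 1$) built into $\eta \in H$ to conclude $[f]_{\gamma;B_{R/2}(x)} \lesssim R^{-\gamma}\|\psi\|_\infty$ via the product rule for H\"older seminorms. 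Plugging these three bounds into \eqref{eq:c2} with $\sigma = \gamma$ produces $|\nabla^2\lock(x)| \lesssim \tx_x^2\|\psi\|_\infty$.

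The main obstacle is that \eqref{eq:def-lock} is not a bona fide local equation: it carries the non-local tail $\eta\lock/R^2$, which the $C^{1,1}$ estimate of Theorem~\ref{thm:c11} can only absorb through the forcing channel and only after both its $L^\infty$ size and its $C^\gamma$ seminorm have been controlled. The size is exactly part \eqref{item:new-corr-1}, and the seminorm is obtained through a short, slightly circular-looking loop that first gets $[\lock]_\gamma$ from Krylov-Safonov using the $L^\infty$ bound on $\lock$. Finally, since Theorem~\ref{thm:c11} requires the ball radius to exceed $\tx$, the edge case $\tx_x > R/2$ must be handled separately; there, the trivial estimate $|\nabla^2\lock(x)| \le 8\|\lock\|_\infty \lesssim R^2\|\psi\|_\infty \lesssim \tx_x^2\|\psi\|_\infty$ from part \eqref{item:new-corr-1} closes the gap.
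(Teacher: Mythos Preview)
Your proposal is correct and follows essentially the same approach as the paper's proof: part \eqref{item:new-corr-1} via the representation \eqref{eq:phi-expression} and Lemma~\ref{lem:stoppingtime}\eqref{item:stoptime-2}, and part \eqref{item:new-corr-2} by first extracting $[\lock]_{\gamma;B_{R/2}(x)}\lesssim R^{2-\gamma}\|\psi\|_\infty$ from Krylov--Safonov, then feeding the decomposition $L_\omega\lock=\tilde\psi+\tfrac{1}{R^2}\eta\lock$ into the $C^{1,1}$ estimate of Theorem~\ref{thm:c11} with the product-rule bound on $[\tfrac{1}{R^2}\eta\lock]_\gamma$. Your explicit treatment of the edge case $\tx_x>R/2$ is a nice addition; the paper absorbs it into the stated range of validity of \eqref{eq:c2}.
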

\begin{proof}
\eqref{item:new-corr-1} is an immediate consequence of Lemma~\ref{lem:stoppingtime}\eqref{item:stoptime-2} and formula \eqref{eq:phi-expression}.

To prove \eqref{item:new-corr-2},  for the simplicity of notations we simply write $\lock=\lock_\omega(x;\eta,\psi)$, $\eta\in H$.
By the H\"older estimate of Krylov-Safonov as in \eqref{eq:osc},
there exists $\gamma(d,\kappa)>0$ such that for any $x\in\Z^d, \eta\in H$,
\[
[\lock]_{\gamma;B_{R/2}(x)}\lesssim
R^{-\gamma}[\max_{B_R(x)}|\lock|+R^2\norm{\tfrac{1}{R^2}\lock\eta+\psi_\omega}_{d;B_R(x)}]
\lesssim 
R^{2-\gamma}\norm{\psi}_\infty,
\]
where we used statement \eqref{item:new-corr-1} in the last inequality.

Further, by Theorem~\ref{thm:c11} and statement \eqref{item:new-corr-1}, for any $x\in\Z^d, \eta\in H$,
\begin{align*}
|\nabla^2\lock(x)|
&\lesssim\tx_x^2\left(
\tfrac1{R^2}\osc_{B_{R/2}(x)}\lock+\norm{\psi+\tfrac1{R^2}\lock\eta}_\infty+R^\gamma[\tfrac1{R^2}\lock\eta]_{\gamma; B_{R/2}(x)}
\right)
\\
&\lesssim
\tx_x^2\left(
\norm{\psi}_\infty+R^{\gamma-2}[\lock]_{\gamma; B_{R/2}(x)}\norm{\eta}_\infty+R^{\gamma-2}\norm{\lock}_\infty [\eta]_{\gamma; B_{R/2}(x)}\
\right)
\\
&\lesssim
\tx_x^2\norm{\psi}_\infty.
\end{align*}
Statement \eqref{item:new-corr-2} is proved.
\end{proof}

Now we are ready to derive some estimates of the Green function $\locg_\omega$.

\begin{proposition}
\label{prop:new-corr-prop}
Let $R>1$.  Let $\locg_{\omega,y}(x)$, $\nu=\nu_R, H$ be as in \eqref{eq:LG}, \eqref{eq:nu}, \eqref{eq:eta0}.
\begin{enumerate}[(a)]
\item\label{item:new-corr-3} 
There exists $c_1=c_1(\kappa,d)>0$ such that, for any $x, y\in\Z^d$,  and   any $\eta\in[0,1]^{\Z^d}$ with $\eta|_{\Z^d\setminus B_{3R}}=1$, we have
\[\locg_\omega(x,y;\eta)\lesssim 
\tx_y^{d-1}e^{-c_1|x-y|/R}\nu(|x-y|).
\]
\item\label{item:new-corr-4}  When $\eta\in H$, for any $x\in B_R, y\in\Z^d$,  writing $\tx_{x,y}=\tx_x+\tx_y$, we have
\[
\abs{\nabla \locg_{\omega;y}(x;\eta)}\lesssim \tx_{x,y}^d
(|x-y|\wedge R+ 1)^{-1}e^{-c_1|x-y|/R}\nu(|x-y|).
\]
\end{enumerate}
\end{proposition}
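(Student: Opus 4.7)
Part (a) is a pointwise upper bound on the local Green function $\locg_\omega(\cdot,\cdot;\eta)$ of problem \eqref{eq:local-func}---equivalently, on the expected local time of the walk before the killing time $T$ in \eqref{eq:def-stoppingtime}---and part (b) is the corresponding gradient bound. The plan is to prove (a) probabilistically, combining the ball Green function bound of Theorem~\ref{thm:Green-bound} with the exponential survival estimate of Lemma~\ref{lem:stoppingtime}(i), and then deduce (b) from (a) via the large-scale $C^{0,1}$ estimate of Theorem~\ref{thm:c11}.

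\textbf{Proof of (a).}
I treat separately the near-field regime $|x-y|\le 4R$ and the far-field regime $|x-y|>4R$. For the near-field, I apply the strong Markov property at the first exit time $\sigma$ from $B_{5R}(y)$ to obtain
\[
\locg_\omega(x,y;\eta)\le G^\omega_{B_{5R}(y)}(x,y)+\sup_{z\in\partial B_{5R}(y)}\locg_\omega(z,y;\eta)\,P^x_\omega(\sigma<T).
\]
The first term is $\lesssim\tx_y^{d-1}\nu(|x-y|)$ by Theorem~\ref{thm:Green-bound} applied to $B_{5R}(y)$. The re-entry term is handled by iteration: on each round trip from $\partial B_{5R}(y)$ back into $B_{5R}(y)$ the walk must traverse a shell of width $\asymp R$ contained in $\{\eta\ge c\}$, so Lemma~\ref{lem:stoppingtime}(i) shows that the probability of yet another re-entry is bounded above by some $p<1$ uniformly, producing a convergent geometric series whose sum remains $\lesssim\tx_y^{d-1}\nu(|x-y|)$. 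For the far-field, set $\sigma':=\inf\{n\ge 0:X_n\in B_R(y)\}$. Strong Markov gives $\locg_\omega(x,y;\eta)=E^x_\omega[\mathbbm 1_{\sigma'<T}\locg_\omega(X_{\sigma'},y;\eta)]$, and Lemma~\ref{lem:stoppingtime}(i) applied to the walk from $x$ (noting that entering $B_R(y)$ forces exit of $B_{|x-y|-R}(x)$) yields $P^x_\omega(\sigma'<T)\lesssim e^{-c|x-y|/R}$. The near-field bound applied on $\partial B_R(y)$ gives $\locg_\omega(z,y;\eta)\lesssim\tx_y^{d-1}\nu(R)$. The elementary inequality $e^{-cu}u^{d-2}\le C_c$ for $u\ge 1$ (with $u=|x-y|/R$) then absorbs the factor $\nu(R)/\nu(|x-y|)$ into the exponential at the cost of shrinking the constant in the exponent.

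\textbf{Proof of (b).}
Fix $x\in B_R$ and set $r:=\tfrac12(|x-y|\wedge R)$. When $r$ exceeds the homogenization scale $\tx_x$ (the opposite case follows from the trivial estimate $|\nabla\locg_{\omega;y}(x)|\le 2\sup_{B_1(x)}|\locg_{\omega;y}|$ combined with part (a)), $y\notin B_r(x)$, and so $u:=\locg_{\omega;y}$ solves $L_\omega u=R^{-2}u\eta$ on $B_r(x)$. Applying the $C^{0,1}$ estimate \eqref{eq:c2} of Theorem~\ref{thm:c11} with $\psi\equiv 0$ and $f=R^{-2}u\eta$ yields
\[
|\nabla u(x)|\lesssim\frac{\tx_x}{r}\osc_{B_r(x)}u+\frac{\tx_x\,r}{R^2}\sup_{B_r(x)}|u|,
\]
where the H\"older term $r^{2+\sigma}[f]_{\sigma;B_r(x)}$ is absorbed via the regularity of $\eta$ from \eqref{eq:eta0} and the H\"older estimate \eqref{eq:osc} for $u$. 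By part (a), both the oscillation and the sup of $u$ over $B_r(x)$ are bounded by $C\tx_y^{d-1}e^{-c|x-y|/(2R)}\nu(|x-y|)$; since $r\le R$ the first term on the right dominates, and $\tx_x\tx_y^{d-1}\le(\tx_x+\tx_y)^d=\tx_{x,y}^d$ delivers the claim with $r\asymp(|x-y|\wedge R)+1$ in the denominator.

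\textbf{Main obstacle.}
The most delicate point is the geometric-decay bookkeeping for re-entries in the near-field part of (a) when $|y|\lesssim R$: in that regime $\eta$ vanishes on a large portion of $B_{5R}(y)$ (namely on $B_{2R}(0)\cap B_{5R}(y)$), so Lemma~\ref{lem:stoppingtime}(i) does not immediately furnish a per-round-trip killing probability at the scale of $B_{5R}(y)$. The resolution is a short case analysis on $|y|$: for $|y|>10R$, the annulus $B_{6R}(y)\setminus B_{5R}(y)$ lies in $\{\eta\equiv 1\}$ and Lemma~\ref{lem:stoppingtime}(i) applies directly; for $|y|\le 10R$, one enlarges the reference ball to $B_{KR}(y)$ for a large fixed constant $K$ (so that $B_{KR}(y)\supset B_{3R}(0)$ and the walk, upon exiting $B_{KR}(y)$, finds itself in $\{\eta\equiv 1\}$), and proceeds as before, absorbing $K$ into the implicit multiplicative constants. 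Everything else is a routine combination of Theorem~\ref{thm:Green-bound}, Lemma~\ref{lem:stoppingtime}, and Theorem~\ref{thm:c11}.
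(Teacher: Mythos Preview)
Your proof is correct and follows essentially the paper's approach, with one notable difference in each part.

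For part (a), the paper splits by dimension: for $d\ge 3$ it bypasses iteration entirely via the trivial domination $\locg_\omega(x,y;\eta)\le G^\omega(x,y)$ by the whole-space Green function (available from Theorem~\ref{thm:Green-bound}), reserving the Lemma~\ref{lem:stoppingtime}(i) argument only for the far field; for $d=2$, where no global Green function exists, it decomposes the occupation time over the successive exit times $\tau_k(x)$ and sums the resulting ball Green function bounds against the exponential tail of $P_\omega^x(T>\tau_k)$. Your unified near-field iteration via re-entries into $B_{5R}(y)$ is a valid alternative that treats all dimensions at once, at the cost of the case analysis on $|y|$ you flag as the main obstacle.

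For part (b), you overlook a simplification that the paper singles out as the point of the construction: since $\eta\in H$ forces $\eta|_{B_{2R}}\equiv 0$ and $B_r(x)\subset B_{3R/2}\subset B_{2R}$, the source term $f=R^{-2}u\eta$ vanishes identically on $B_r(x)$, so $\locg_{\omega;y}$ is exactly $\omega$-harmonic there. Your treatment of the H\"older term $r^{2+\sigma}[f]_{\sigma;B_r(x)}$ is therefore unnecessary---the term is zero---and Theorem~\ref{thm:c11} applies with $\psi\equiv 0$, $f\equiv 0$, yielding directly $|\nabla\locg_{\omega;y}(x)|\lesssim\frac{\tx_x}{r}\osc_{B_r(x)}\locg_{\omega;y}$. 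Your argument still goes through, but the paper's remark after the proof stresses that this harmonicity is precisely why the local corrector was designed with $\eta|_{B_{2R}}=0$, in contrast to the approximate corrector whose Green function $\apg_\omega$ is nowhere $\omega$-harmonic.
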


Recall the Green functions $G_R=G_R^\omega$ and $G$ in Definition~\ref{def:green}.  

\begin{proof}
{\eqref{item:new-corr-3}}
First, consider $d\ge 3$.  By Theorem~\ref{thm:Green-bound} we have
$\locg_\omega(x,y;\eta)\lesssim G(x,y)\lesssim\tx^{d-1}_y\nu(|x-y|)$.  Hence \eqref{item:new-corr-3} is true when $|x-y|\le 4R$.  When $|x-y|>4R$,  without loss of generality,  assume $|x-y|=2nR$ for some $n\in\N$.  Recall $\tau_k(x)$ in \eqref{eq:def-tau-bigball}. Then, by \eqref{eq:comp-tau-T} and Theorem~\ref{thm:Green-bound},
\begin{align*}
\locg_\omega(x,y;\eta)
&\le 
P_\omega^x(T>\tau_n)\max_{z\in\partial B_{nR}(x)}G(z,y)\\&
\lesssim 
e^{-cn}\tx_y^{d-1}\max_{z\in\partial B_{nR}(x)}\nu(|z-y|)\\&
\lesssim
\tx_y^{d-1}e^{-c|x-y|/R}\nu(|x-y|).
\end{align*}
Hence \eqref{item:new-corr-3} is proved for $d\ge 3$. It remains to consider $d=2$.

When $d=2$ and $|x-y|\le 2R$,  by the strong Markov property,
\begin{align*}
\locg_\omega(x,y;\eta)
&\le 
E_\omega^x[
\sum_{n=0}^{\tau_2(x)-1}\mathbbm1_{X_n=y}+\sum_{k\ge 1}\mathbbm1_{T>\tau_k(x)}\sum_{n=\tau_k(x)}^{\tau_{k+1}(x)-1}\mathbbm1_{X_n=y}
]\\
&\le G_{2R}^{\theta_x\omega}(0,y-x)+
\sum_{k\ge 2}P_\omega^x(T>\tau_k(x))G_{(k+1)R}^{\theta_x\omega}(0,y-x)\\
&\lesssim 
\tx_y^{d-1}\big[\nu_{2R}(|x-y|)+\sum_{k\ge 2}e^{-ck}(\log(k+1)+\log R-\log|x-y|)\big]\\
&\lesssim
\tx_y^{d-1}\nu_R(|x-y|).
\end{align*}
In particular, $\max_{z\in\partial B_R(y)}\locg_\omega(z,y;\eta)\lesssim\tx_y^{d-1}$.

When $d=2$ and $|x-y|$>2R,  we let $K=\inf\{t\ge 0: Y_t\in \bar B_R(y)\}$.  Then
\begin{align*}
\locg_\omega(x,y;\eta)
&\le 
P_\omega^x(T>K)\max_{z\in\partial B_R(y)}\locg_\omega(z,y;\eta)\\
&\lesssim
P_\omega^x(T>\tau_{|x-y|/(2R)}(x))\tx_y^{d-1}
\\&
\stackrel{\eqref{eq:comp-tau-T}}\lesssim
e^{-c|x-y|/R}
\tx^{d-1}
\asymp
e^{-c|x-y|/R}
\tx^{d-1}\nu_R(|x-y|).
\end{align*}

Proposition~\ref{prop:new-corr-prop}\eqref{item:new-corr-3} is proved.

\medskip

\noindent{\eqref{item:new-corr-4}}
By Proposition~\ref{prop:new-corr-prop}\eqref{item:new-corr-3}, when $\eta\in H$,
\[
\abs{\nabla \locg_{\omega;y}(x;\eta)}
\lesssim
\max_{\bar B_1(x)}\locg_\omega(\cdot, y;\eta)\lesssim \tx_y^{d-1}e^{-c|x-y|/R}\nu(|x-y|).
\]
Hence the statement is true when $|x-y|\wedge R\le \tx_{x,y}$.
We only need to consider the case $R>\tx_{x,y}$ and $|x-y|>\tx_{x,y}$.

When $\tx_{x,y}<|x-y|$,  since $B_{(|x-y|\wedge R)/2}(x)\subset B_{2R}\setminus\{y\}$, 
the function $z\mapsto\locg_\omega(z,y;\eta)$ is $\omega$-harmonic on $B_{(|x-y|\wedge R)/2}(x)$. 
Hence, by Theorem~\ref{thm:c11} and \eqref{item:new-corr-3},
\begin{align}\label{eq:c01-locg}
|\nabla\locg_{\omega;y}(x)|
&\lesssim
\frac{\tx_{x,y}}{|x-y|\wedge R}\osc_{B_{(|x-y|\wedge R)/2}(x)}\locg_{\omega;y}\\
&\lesssim
\tx_{x,y}^{d}(|x-y|\wedge R)^{-1}e^{-c|x-y|/R}\nu(|x-y|).\nn
\end{align}

Our proof is complete.
\end{proof}

\begin{remark}
As can be seen in \eqref{eq:c01-locg},  the fact that $\locg_\omega(\cdot,y;\eta)$, $\eta\in H$, is $\omega$-harmonic in $B_{2R}\setminus\{y\}$, i.e., $\eta|_{B_{2R}}=0$, is crucial for the bound  of $\nabla\locg_\omega$ as above.

Note that the  Green function $\apg_\omega$ (as defined in \eqref{eq:def-apg}) of the approximate corrector solves the equation (which corresponds to the case $\eta\equiv 1$ of \eqref{eq:LG})
\[
L_\omega \apg(\cdot,y)=\tfrac{1}{R^2}\apg(\cdot,y)-\mathbbm1_y.
\]
Since $\apg_\omega$ is nowhere $\omega$-harmonic,   the gradient bound as \eqref{eq:c01-locg} cannot be obtained for $\apg_\omega$ using Theorem~\ref{thm:c11}. 
\end{remark}

\subsection{Homogenization of the local corrector} \label{subsec:local corr}

From now on, we {\it fix} a  smooth function $\eta_0\in C^\infty(\R^d)$ that satisfies
\[
\eta_0\in[0,1], \, 
\eta_0|_{\R^d\setminus\B_{8/3}}=1,\,
\eta_0|_{\B_{7/3}}=0
\]
and set, for $R>0$,
\begin{equation}
\label{eq:eta-R}
\eta_R(x)=\eta_0(\tfrac{x}{R\vee 1}),\quad x\in\R^d.
\end{equation}
Recall $H=H_R$ in \eqref{eq:eta0}. Note that $\eta_R\in H_R$ for $R\ge 1$.

The goal of this subsection is to obtain the optimal sizes for $\lock(x;\eta_R,\psi)$ and its discrete gradient $\nabla\lock(x;\eta_R,\psi)$.  

\begin{theorem}\label{thm:new-corrector-bd} Recall $\delta(R)$, $\mu(R)$, $\eta_R$ in  \eqref{eq:def-delta},\eqref{eq:def-mu},\eqref{eq:eta-R}.  Let $\lock=\lock_{\omega,R}(\cdot;\eta,\psi)$.
For any $q_1\in(0,\tfrac{2d}{3d+2})$, $q_2\in(0,\tfrac{d}{2d+2})$,  
\begin{enumerate}[(a)]
\item\label{item:lock-bd} when $\eta\in H_R$, we have $\mb E [\exp(c|\tfrac{\lock(x)}{\mu(R)}|^{q_1})]<C_{q_1} \quad \text{ for all }x\in\Z^d$;
\item\label{item:lock-c01} when $\eta=\eta_R$, we have $\mb E [\exp(c|\tfrac{\nabla\lock(x)}{\delta(R)}|^{q_2})]<C_{q_2} \quad \text{ for }x\in B_R.$
\end{enumerate}
\end{theorem}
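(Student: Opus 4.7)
The strategy is to apply the Efron-Stein-type inequality Lemma~\ref{lem:efron-stein} to $\lock(x)$ and $\nabla_e\lock(x)$, with sensitivities computed via the Green function $\locg$, mirroring the treatment of $\apk$ in Lemma~\ref{lem:approx-cor-concentration}. The scales $\mu(R)$ and $\delta(R)$ will arise as the $\ell^2(\Z^d)$-norms of the deterministic factors in the sensitivities.

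The first step is to differentiate the defining equation $L_\omega\lock=\tfrac{1}{R^2}\lock\eta+\psi_\omega-\bar\psi$ by $\vd{y}$. Since $\eta$ is deterministic, \eqref{eq:vdlaplace} shows that $\vd{y}\lock$ solves $L_\omega(\vd{y}\lock)-\tfrac{1}{R^2}(\vd{y}\lock)\eta=\bigl[\vd{y}\psi(\theta_y\omega)-\tfrac{1}{2}\tr(\vd{y}a(y)\nabla^2\lock_{\omega_y'}(y))\bigr]\mathbbm{1}_{x=y}$; uniqueness (Proposition~\ref{prop:exist-unique-lock}) together with the Green function representation \eqref{eq:phi-in-green} then yields
\[
\vd{y}\lock(x)=-\Bigl[\vd{y}\psi(\theta_y\omega)-\tfrac{1}{2}\tr\bigl(\vd{y}a(y)\nabla^2\lock_{\omega_y'}(y)\bigr)\Bigr]\locg_\omega(x,y;\eta),
\]
and the analogous identity for $\vd{y}\nabla_e\lock(x)$ with $\nabla_e\locg_\omega(\cdot,y;\eta)$ in place of $\locg_\omega(\cdot,y;\eta)$, since $\vd{y}$ commutes with $\nabla_e$. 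The prefactor is bounded by $({\tx_y'}^{\,2}+1)\|\psi\|_\infty$ via Lemma~\ref{lem:phi-bd-c11}\eqref{item:new-corr-2} applied to $\lock_{\omega_y'}$ (still a local corrector with the same $\eta\in H_R$), while the Green function factors are controlled by Proposition~\ref{prop:new-corr-prop}. A radial integration analogous to \eqref{eq:nu2-sum}, splitting at $|y|=R$, then produces
\[
\sum_y e^{-2c|x-y|/R}\nu(|x-y|)^2\asymp \mu(R)^2,\quad \sum_y\tfrac{e^{-2c|x-y|/R}\nu(|x-y|)^2}{(|x-y|\wedge R+1)^{2}}\asymp \delta(R)^2,
\]
where the $(\log R)^3$ factor for $d=2$ in $\delta$ emerges from $\int_1^R r^{-1}\log^2(R/r)\dd r$. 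Invoking Lemma~\ref{lem:efron-stein} with $\mathscr X_y\lesssim\tx^{d+1}$ for (a) and $\mathscr X_y\lesssim\tx^{d+2}$ for (b), both subexponentially integrable by Theorem~\ref{thm:hk-bounds}, yields the stretched-exponential concentration of $\lock(x)/\mu(R)$ and $\nabla_e\lock(x)/\delta(R)$ around their respective means.

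It remains to bound the means $|\mb E\lock(x)|\lesssim\mu(R)$ and $|\mb E\nabla_e\lock(x)|\lesssim\delta(R)$ so they can be absorbed; this is the main obstacle. My plan is to write $\lock=\apk+w$, where $w$ satisfies $L_\omega w-\tfrac{w\eta}{R^2}=\tfrac{\apk(\eta-1)}{R^2}$ with source supported in $B_{3R}$, so that $w(x)=-\tfrac{1}{R^2}\sum_{y\in B_{3R}}\locg_\omega(x,y;\eta)\apk(y)(\eta(y)-1)$. Cauchy-Schwarz together with Lemma~\ref{lem:approx-cor-concentration} and the $L^2$ moments of $\locg_\omega$ yields $\mb E|w(x)|\lesssim\mu(R)$, and together with $|\mb E\apk(x)|\lesssim\mu(R)$ (established inside the proof of Lemma~\ref{lem:approx-cor-concentration}) this delivers (a). For (b), the decisive observation is that stationarity of $\apk$ forces $\mb E\nabla_e\apk(x)=\mb E[\apk(x+e)-\apk(x)]=0$, so $\mb E\nabla_e\lock(x)=\mb E\nabla_e w(x)$; the analogous Cauchy-Schwarz calculation using Proposition~\ref{prop:new-corr-prop}\eqref{item:new-corr-4} in place of \eqref{item:new-corr-3} then gives $\mb E|\nabla_e w(x)|\lesssim \mu(R)/R\leq 1\leq\delta(R)$. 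The subtlest point is to carry out the radial summations tightly enough to match the sharp scales, in particular the $(\log R)^{3/2}$ factor in $d=2$.
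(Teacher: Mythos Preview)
Your proposal is correct and follows essentially the same approach as the paper: sensitivity formula via \eqref{eq:vdlaplace} and \eqref{eq:phi-in-green}, bounds from Lemma~\ref{lem:phi-bd-c11}\eqref{item:new-corr-2} and Proposition~\ref{prop:new-corr-prop}, then Lemma~\ref{lem:efron-stein} with the radial $\ell^2$-sums identifying $\mu(R)$ and $\delta(R)$. Your control of the mean for part~(a) via $\lock=\apk+w$ is exactly the paper's Lemma~\ref{lem:elock}.

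The one genuine variation is how you bound $|\mb E\nabla_e\lock(x)|$ in part~(b). The paper proves a separate Lemma~\ref{lem:ball-modif-a-bit}, comparing $\lock_{\omega,R}(\cdot;\eta_R)$ to $\lock_{\omega,R}(\cdot;\eta_R(\cdot-z))$ and exploiting translation invariance of $\mb P$ to get $|\mb E[\lock(x+z)-\lock(x)]|\lesssim |z|\mu(R)/R$. You instead reuse the decomposition $\lock=\apk+w$, observe $\mb E\nabla_e\apk=0$ by stationarity of $\apk$, and bound $\mb E|\nabla_e w(x)|$ via Proposition~\ref{prop:new-corr-prop}\eqref{item:new-corr-4}; the resulting sum $\tfrac{\mu(R)}{R^2}\sum_{y\in B_{3R}}(|x-y|\wedge R+1)^{-1}\nu(|x-y|)\lesssim \mu(R)/R$ indeed gives the same bound. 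Your route is slightly more economical here since it avoids introducing the shifted cutoff, while the paper's lemma is stated separately because it is reused later in constructing the global corrector (Theorem~\ref{thm:global_krt}).
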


\begin{lemma}\label{lem:elock}
Recall $\mu(R)$, $H_R$ in \eqref{eq:def-mu},\eqref{eq:eta0}. Let $R>1$. When $\eta\in H_R$,  we have
\[
\mb E[|\lock(x)|]\lesssim\mu(R) \quad\text{ for all }x\in\Z^d.
\]
\end{lemma}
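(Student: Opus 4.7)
The plan is to reduce the question to a first-moment bound on the approximate corrector $\apk=\apk(\cdot;\psi,R)$ from Subsection~\ref{subsec:approx_cor}, for which Lemma~\ref{lem:approx-cor-concentration} already delivers $\mb E[|\apk(x)|]\lesssim\mu(R)$. Setting $v:=\lock-\apk$, a direct subtraction of \eqref{eq:eq_of_phihat} from \eqref{eq:def-lock} yields
\[
L_\omega v = \frac{\eta}{R^2}v + \frac{(\eta-1)\apk}{R^2}\quad\text{in }\Z^d,
\]
and crucially, since $\eta|_{\Z^d\setminus B_{3R}}=1$, the forcing is supported on $B_{3R}$. Both $\lock$ and $\apk$ are bounded (by Lemma~\ref{lem:phi-bd-c11}\eqref{item:new-corr-1} and \eqref{eq:hatphi-bound}), so $v$ is bounded, and the uniqueness clause of Proposition~\ref{prop:exist-unique-lock} identifies $v$ with the Green-function representation
\[
v(x)=-\sum_{y\in B_{3R}}\locg_\omega(x,y;\eta)\,\frac{(\eta(y)-1)\apk(y)}{R^2}.
\]

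Next I would combine this with the pointwise bound $\locg_\omega(x,y;\eta)\lesssim\tx_y^{d-1}e^{-c|x-y|/R}\nu(|x-y|)$ from Proposition~\ref{prop:new-corr-prop}\eqref{item:new-corr-3} and $|\eta-1|\le1$ to get
\[
|v(x)|\lesssim\frac{1}{R^2}\sum_{y\in B_{3R}}\tx_y^{d-1}e^{-c|x-y|/R}\nu(|x-y|)\,|\apk(y)|.
\]
Taking expectations and applying the Cauchy-Schwarz inequality pointwise in $y$, each summand is bounded by $\mb E[\tx_y^{2(d-1)}]^{1/2}\mb E[|\apk(y)|^2]^{1/2}$. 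By stationarity of $\mb P$ these equal the corresponding expressions at $y=0$, with the first factor finite by Theorem~\ref{thm:hk-bounds} (the stretched-exponential moment $\mb E[\exp(c\tx^{d-\error})]<\infty$ yields every polynomial moment) and the second factor $\lesssim\mu(R)$ by the stretched-exponential tail in Lemma~\ref{lem:approx-cor-concentration} for the same reason. No independence between $\tx$ and $\apk$ is needed.

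The remaining ingredient is the elementary kernel estimate $\sum_{y\in B_{3R}}\nu(|x-y|)\lesssim R^2$ uniformly in $x\in\Z^d$, separately verified for $d=2$ (using $\nu(r)=1+\log(R/((r+1)\wedge R))$) and $d\ge 3$ (using $\nu(r)=(r+1)^{2-d}$) by polar integration; the exponential factor is not even needed here since the summation is already restricted to $B_{3R}$. This yields $\mb E[|v(x)|]\lesssim\mu(R)$, and adding $\mb E[|\apk(x)|]\lesssim\mu(R)$ (which holds at every $x$ by stationarity of $\apk$ and Lemma~\ref{lem:approx-cor-concentration}) completes the proof. No serious obstacle arises: the argument is driven entirely by the structural observation that $\lock-\apk$ is forced by a term of size $R^{-2}$ supported on $B_{3R}$, and the Green-kernel $\locg_\omega(x,\cdot;\eta)$ has total mass of order $R^2$ over this region, precisely absorbing the prefactor.
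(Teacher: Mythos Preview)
Your proof is correct and follows essentially the same approach as the paper: compare $\lock$ with the approximate corrector $\apk$, observe that the difference solves an equation of type \eqref{eq:local-func} with forcing $R^{-2}(\eta-1)\apk$ supported in $B_{3R}$, represent it via $\locg_\omega$ using \eqref{eq:phi-in-green}, and then combine the Green-function bound of Proposition~\ref{prop:new-corr-prop}\eqref{item:new-corr-3} with the moment bound on $\apk$ from Lemma~\ref{lem:approx-cor-concentration}, splitting $\tx_y^{d-1}$ and $\apk(y)$ by Cauchy--Schwarz. The only cosmetic difference is that the paper phrases the $\apk$ bound as $|\apk(y)|\lesssim\mu(R)\ms Y(\theta_y\omega)$ and uses the monotonicity of $r\mapsto e^{-cr/R}\nu_R(r)$ to reduce the kernel sum to $\sum_{z\in B_{3R}}\nu_R(|z|)$, whereas you drop the exponential and bound $\sum_{y\in B_{3R}}\nu(|x-y|)$ directly; both yield the same $R^2$ estimate.
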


\begin{proof}
Our proof is through the comparison between $\lock$ and $\apk$. By Lemma~\ref{lem:approx-cor-concentration},  there exists a random variable $\ms Y(\omega)$ with $\mb E [\ms Y^2]<C$ such that $\forall y\in\Z^d$,
\begin{equation}\label{eq:the-apk-bd}
|\apk(y)|\lesssim\mu(R)\ms Y(\theta_y\omega)\quad \text{for $\mb P$-a.s. $\omega$}.
\end{equation}

Consider $u=\lock-\apk$. Then $u$ solves the equation
\[
L_\omega u=\tfrac 1{R^2}u\eta-\tfrac 1{R^2}\apk(1-\eta)
\quad\text{ in }\Z^d.
\]
Hence, by formula \eqref{eq:phi-in-green},  the bounds \eqref{eq:the-apk-bd} and Proposition~\ref{prop:new-corr-prop}\eqref{item:new-corr-3}, for $x\in\Z^d$,
\begin{align*}
|u(x)|
&=\Abs{\sum_{y\in B_{3R}} \locg_\omega(x,y)\tfrac1{R^2}\apk(y)(1-\eta(y))}\\
&\lesssim
\frac{\mu(R)}{R^2}\sum_{y\in B_{3R}} \ms Y(\theta_y\omega)\tx_y^{d-1}e^{-c|x-y|/R}\nu_R(|x-y|).
\end{align*}
Note that $e^{-cr/R}\nu_R(r)$ is non-increasing in $r$, and 
\[
\mb E[\ms Y\tx^{d-1}]\le \norm{\ms Y}_{L^2(\mb P)}\norm{\tx^{d-1}}_{L^2(\mb P)}<C. 
\]  
Hence,  taking expectations in the above inequality,  for any $x\in\Z^d$,
\[
\mb E[|u(x)|]\lesssim 
\frac{\mu(R)}{R^2}\sum_{y\in B_{3R}}e^{-c|x-y|/R}\nu_R(|x-y|) 
\lesssim
\frac{\mu(R)}{R^2}\sum_{z\in B_{3R}}\nu_R(|z|) \lesssim \mu(R).
\]
This bound, together with \eqref{eq:the-apk-bd} and a triangle inequality, yields the lemma.
\end{proof}

\begin{proof}
[Proof of Theorem~\ref{thm:new-corrector-bd}\eqref{item:lock-bd}]
By \eqref{eq:def-lock} and formula \eqref{eq:vdlaplace},  $\vd{y}\lock$ satisfies, for $x,y\in\Z^d$, the equation
\[
L_{\omega}\vd{y}\lock(x)
=
\tfrac{1}{R^2}\vd y\lock\eta+[\vd y\psi(y)-\tfrac12\tr(\vd y a\nabla^2\lock_{\omega'_y})(y)]\mathbbm{1}_{y=x},
\]
which, by formula \eqref{eq:phi-in-green}, yields
\begin{equation}\label{eq:sensiti-formu-phi}
\vd{y}\lock(x)=
-[\vd y \psi(y)-\tfrac12\tr(\vd y a\nabla^2\lock_{\omega'_y})(y)]\locg_{\omega}(x,y).
\end{equation}
One may compare this expression to \eqref{eq:w-appr-gf}.  Then, using the bounds of $\nabla^2\lock$ and $\locg_\omega$ in Lemma~\ref{lem:phi-bd-c11}\eqref{item:new-corr-2} and Proposition~\ref{prop:new-corr-prop}\eqref{item:new-corr-3},we obtain 
\[
|\vd{y}\lock(x)
|\lesssim
{{\tx}_y'}^{2}\tx_y^{d-1}\norm{\psi}_\infty e^{-c|x-y|/R}
\nu(|x-y|).
\]
By \eqref{eq:nu2-sum} and applying Lemma~\ref{lem:efron-stein} to  $Z:=\frac{\lock(x)}{\norm{\psi}_\infty \mu(R)}$,  we get,
for any $0<p<\tfrac{2d}{3d+2}$, 
\[
\mb E[\exp(c_p|Z-\mb EZ|^p)]\le C.
\]
Since it was shown  in Lemma~\ref{lem:elock} that $\mb E[|Z|]\lesssim 1$,  Theorem~\ref{thm:new-corrector-bd}\eqref{item:lock-bd} follows.
\end{proof}

\begin{lemma}\label{lem:ball-modif-a-bit}
Recall $\mu(R)$, $\eta_R$ in \eqref{eq:def-mu},\eqref{eq:eta-R}.  
We fix $R>1$ and let $\lock_\omega=\lock_{\omega,R}(\cdot;\eta_R,\psi)$. 
Then, for any $x\in\Z^d, z\in B_{R/3}$,  we have
\begin{equation}\label{eq:230225-1}
\mb E
\abs{
\lock_\omega(x+z)-\lock_{\theta_z\omega}(x)
}
\lesssim
\frac{|z|\mu(R)}{R}.
\end{equation}
As a consequence,  for any $x\in\Z^d, z\in B_{R/3}$,
\begin{equation}
\label{eq:230225-2}
\abs{\mb E
[\lock_\omega(x+z)-\lock_\omega(x)]
}
\lesssim
\frac{|z|\mu(R)}{R}.
\end{equation}
\end{lemma}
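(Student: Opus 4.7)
The plan is to identify $\lock_\omega(x+z)$ and $\lock_{\theta_z\omega}(x)$ as evaluations of solutions of two closely related problems of the form \eqref{eq:def-lock}, and to estimate their difference through the Green function representation \eqref{eq:phi-in-green}. Setting $\tilde v(y) := \lock_{\theta_z\omega}(y-z)$, the first task is to check, via the identity $(\theta_z\omega)(x',x'+e) = \omega(x'+z, x'+z+e)$ under the substitution $y = x'+z$, together with $\psi_{\theta_z\omega}(y-z) = \psi_\omega(y)$, that
\[
L_\omega \tilde v(y) = \tfrac{1}{R^2}\tilde v(y)\,\eta_R(y-z) + \psi_\omega(y) - E_\Q[\psi], \quad y \in \Z^d.
\]
The hypothesis $z \in B_{R/3}$ guarantees that $\eta_R(\cdot - z) \in H_R$, so this is a legitimate instance of \eqref{eq:def-lock} and Proposition~\ref{prop:exist-unique-lock} uniquely identifies $\tilde v$ among bounded solutions.

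Next, I would form $w := \lock_\omega - \tilde v$ and subtract equations to obtain
\[
L_\omega w = \tfrac{1}{R^2}w\,\eta_R + \tfrac{1}{R^2}\tilde v\,\bigl[\eta_R - \eta_R(\cdot - z)\bigr] \quad\text{in }\Z^d,
\]
and then apply \eqref{eq:phi-in-green} to write
\[
w(x+z) = -\tfrac{1}{R^2}\sum_{y' \in \Z^d} \locg_\omega(x+z, y';\eta_R)\,\tilde v(y')\,\bigl[\eta_R(y') - \eta_R(y'-z)\bigr].
\]
To estimate this in $L^1(\mb P)$, I would combine three inputs: the Lipschitz bound $|\eta_R(y') - \eta_R(y'-z)| \lesssim |z|/R$ coming from $\|\nabla\eta_R\|_\infty \lesssim 1/R$; the Green function bound $\locg_\omega(x+z, y') \lesssim \tx_{y'}^{d-1} e^{-c|x+z-y'|/R} \nu(|x+z-y'|)$ from Proposition~\ref{prop:new-corr-prop}\eqref{item:new-corr-3}; and a Cauchy--Schwarz step
\[
\mb E\bigl[\tx_{y'}^{d-1}|\tilde v(y')|\bigr] \le \|\tx^{d-1}\|_{L^2(\mb P)}\,\|\lock_\omega(y'-z)\|_{L^2(\mb P)} \lesssim \mu(R),
\]
where the second factor uses the exponential moment bound of Theorem~\ref{thm:new-corrector-bd}\eqref{item:lock-bd} together with the stationarity of $\mb P$ under $\theta_z$. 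A direct computation analogous to the verification of \eqref{eq:nu2-sum}, split into $d=2$ and $d\ge 3$, yields $\sum_{y' \in \Z^d} e^{-c|y'|/R}\nu(|y'|) \lesssim R^2$, and combining everything produces \eqref{eq:230225-1}.

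Finally, \eqref{eq:230225-2} is an immediate consequence of \eqref{eq:230225-1}: by stationarity $\mb E\lock_{\theta_z\omega}(x) = \mb E\lock_\omega(x)$, so the triangle inequality gives
\[
\bigl|\mb E[\lock_\omega(x+z) - \lock_\omega(x)]\bigr| = \bigl|\mb E[\lock_\omega(x+z) - \lock_{\theta_z\omega}(x)]\bigr| \le \mb E|\lock_\omega(x+z) - \lock_{\theta_z\omega}(x)|.
\]
The main obstacle I anticipate lies in the first step: carefully tracking how the spatial shift on the environment interacts with the change of base point, and confirming that the shifted weight $\eta_R(\cdot - z)$ still lies in $H_R$ so that Proposition~\ref{prop:exist-unique-lock} actually identifies $\tilde v$ with the correct solution of the transformed problem. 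Once the equation for $w$ is in place, the remaining argument closely parallels the sensitivity computation carried out for Theorem~\ref{thm:new-corrector-bd}\eqref{item:lock-bd}.
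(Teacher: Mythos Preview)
Your proposal is correct and follows essentially the same route as the paper: the paper also identifies $\lock_{\theta_z\omega}(x;\eta_R,\psi)$ with $\lock_\omega(x+z;\eta_R(\cdot-z),\psi)$ (your $\tilde v$), subtracts to get an equation with forcing $\tfrac{1}{R^2}\tilde v[\eta_R-\eta_R(\cdot-z)]$, applies the Green representation \eqref{eq:phi-in-green} together with Proposition~\ref{prop:new-corr-prop}\eqref{item:new-corr-3} and Theorem~\ref{thm:new-corrector-bd}\eqref{item:lock-bd}, and then deduces \eqref{eq:230225-2} from translation invariance exactly as you do. The only cosmetic difference is that the paper introduces $\phi_1,\phi_2$ as two local correctors with weights $\eta_R$ and $\eta_R(\cdot-z)$, whereas you define $\tilde v$ directly via the shifted environment and verify the equation it satisfies; the content is identical.
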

\begin{proof}
Let $\eta_z$ denote the function $\eta_z(x):=\eta_R(x-z), \, \forall x\in\Z^d$.
By our definitions of $H_R$ and $\eta_R$ in \eqref{eq:eta0} and \eqref{eq:eta-R},  we still have $\eta_z\in H_R$ for $|z|<R/3$.
Write $\phi_1=\lock_\omega(x;\eta_R,\psi)$ and $\phi_2=\lock_\omega(x;\eta_z,\psi)$. 

First, observe that for any $z\in B_{R/3}, x\in\Z^d$,
\[
\lock_\omega(x+z;\eta_z,\psi)=\lock_{\theta_z \omega}(x;\eta_R,\psi).
\]
Hence, for $z\in B_{R/3}, x\in\Z^d$,
\[
\abs{\lock_\omega(x+z;\eta_R,\psi)-\lock_{\theta_z\omega}(x;\eta_R,\psi)}=\abs{\phi_1(x+z)-\phi_2(x+z)}
\]

Next, notice that $u:=\phi_1-\phi_2$ solves the equation
\[
L_\omega u=\tfrac1{R^2}u \eta_R+\tfrac1{R^2}\phi_2(\eta_R-\eta_z).
\]
Hence, by formula \eqref{eq:phi-in-green},  Theorem~\ref{thm:new-corrector-bd}\eqref{item:lock-bd} and Proposition~\ref{prop:new-corr-prop}\eqref{item:new-corr-3},  we have, with $\locg_\omega=\locg_\omega(\cdot,\cdot;\eta_R,\psi)$, for all $x\in\Z^d$, 
\begin{align*}
|u(x)|
&=\sum_{y\in B_{3R+1}} \locg_\omega(x,y)\tfrac1{R^2}\phi_2(y)[\eta_R(y)-\eta_R(y-z)]\\
&\lesssim
\frac{\mu(R)|z|}{R^3}\sum_{y\in B_{3R+1}} \ms Y_1(y)\tx_y^{d-1}e^{-c|x-y|/R}\nu_R(|x-y|).
\end{align*}
Taking expectations on both sides, we get, for $z\in B_{R/3}, x\in\Z^d$,
\[
\mb E[|u(x)|]
\lesssim
\frac{\mu(R)|z|}{R^3}\sum_{y\in B_{3R}}\nu_R(|z|) \lesssim \frac{\mu(R)|y|}{R}.
\]
We have proved \eqref{eq:230225-1}. 

Finally, display \eqref{eq:230225-2} follows from the fact that  $\mb E[\lock_\omega(x)]=\mb E[\lock_{\theta_z\omega}(x)]$ (by the translation-invariance of $\mb P$).
\end{proof}

\begin{proof}[Proof of Theorem~\ref{thm:new-corrector-bd}\eqref{item:lock-c01}]
 First, we will show that 
for any $p\in(0,\tfrac{d}{2d+2})$, there exists a constant $c_p>0$  such that
\begin{equation}\label{eq:230215}
\mb E
\left[
\exp\left(
c_p|\nabla\lock(x)-\mb E\nabla\lock(x)|/\delta(R)|^p
\right)\right]\le C
\quad\text{ for all }x\in B_R.
\end{equation}

By \eqref{eq:sensiti-formu-phi}, we have
$
\vd{y}\nabla\lock(x)=
-[\vd y \psi(y)-\tfrac12\tr(\vd y a\nabla^2\lock_{\omega'_y})(y)]
\nabla_x \locg_{\omega}(x,y).
$
Here the subscript of $\nabla_x$ indicates that $\nabla$ is applied only to the coordinate $x$. 
Hence, by Lemma~\ref{lem:phi-bd-c11}\eqref{item:new-corr-2} and Proposition~\ref{prop:new-corr-prop}\eqref{item:new-corr-4}, for any $x\in B_R$, $y\in\Z^d$,
\[
|\vd{y}\nabla\lock(x)|\lesssim\tx'^2_y\tx_{x,y}^d\norm{\psi}_\infty 
F_R(|x-y|),
\]
where (Recall $\nu_R$ in \eqref{eq:nu})
$F_R(r)=
\frac{e^{-c_1r/R}}{r\wedge R+ 1}\nu_R(r)$.
Note that $\mb E[(\tx'^2_y\tx_{x,y}^d)^n]\lesssim\mb E[\tx^{(2+d)n}]$ for all $y\in\Z^d,n\ge 1$.
Further, computations show that, for $x\in B_R$,
\[
\sum_y F_R(|x-y|)^2\lesssim
\int_0^\infty F_R^2(r)r^{d-1}\dd r
\asymp \delta(R)^2.
\]
Applying Lemma~\ref{lem:efron-stein} to $Z=\nabla\lock(x)$,  inequality \eqref{eq:230215} follows.

It remains to show that $\abs{\mb E[\nabla\lock(x)]}\lesssim\delta(R)$. By \eqref{eq:230225-2} in  Lemma~\ref{lem:ball-modif-a-bit},  we have
\begin{align*}
\abs{\mb E[\nabla\lock(x)]}
&\lesssim
\frac{\mu(R)}{R}\lesssim 1\le \delta(R).
\end{align*}
This estimate, together with \eqref{eq:230215}, yields
Theorem~\ref{thm:new-corrector-bd}\eqref{item:lock-c01}.
\end{proof}

\subsection{Quantitative homogenization for the Dirichlet problem \eqref{eq:elliptic-dirich}: proof of Theorem~\ref{thm:opt-quant-homo}}

\begin{proof}
[Proof of Theorem~\ref{thm:opt-quant-homo}] 
Recall $\lock=\lock_{\omega,R}$ in \eqref{def:new-corrector}.  Let $v^k$, $\xi$ be the functions
\begin{align*}
&v^k(x)=\lock(x;\eta_R,\tfrac{\omega_k(x)}{2\tr\omega(x)}), \quad k=1,\ldots,d\\
&\xi(x)=\lock(x;\eta_R,\tfrac{\zeta}{\tr\omega(0)}).
\end{align*}
For $q\in(0,\tfrac{d}{2d+2})$,  setting $\ms Y_1=\frac{1}{\delta(R)}(\sum_{k=1}^d
\norm{\nabla v^k}_{d;B_R}+
\norm{\nabla \xi}_{d;B_R})
$ and
\[
\ms Y_2=\frac{1}{\mu(R)}(\sum_{k=1}^d \osc_{\bar B_R}v^k+\osc_{\bar B_R}\xi)-C(\log R)^{1/q},
\]
by Theorem~\ref{thm:new-corrector-bd} and Lemma~\ref{fact:max}, we have, with $\ms Y=\ms Y_1+\ms Y_2$,
\[
\mb E[\exp(c\ms Y^q)]<C_q, \quad\forall x\in B_R.
\]
Thus, by Lemma~\ref{lem:two-scale-exp}, 
\[
\max_{x\in B_R}|u(x)-\bar{u}(\tfrac xR)|
\lesssim
\tfrac{\ms Y}R\norm{\bar u}_{C^4(\bar{\B}_1)}
[\delta(R)+\tfrac{1}{R}\mu(R)(\log R)^{1/q}].
\]

Theorem~\ref{thm:opt-quant-homo}  follows.
\end{proof}

\section{The global correctors: existence, uniqueness,  and stationarity}
Although the local correctors $\lock$ constructed in the previous subsections are good enough for us to obtain optimal rates for the homogenization of Dirichlet problems,  the global correctors,  if they exist,  could be more useful in investigating homogenization problems with (or without) other boundary conditions.

The goal of this section is to prove Theorem~\ref{thm:global_krt}.

Throughout this section,  we  let $\lock_R=\lock_{\omega,R}(x;\eta_R,\psi)$ be as defined in  Definition~\ref{def:loc-global}, where $\eta_R$ is chosen as in \eqref{eq:eta-R}.

\begin{remark}
The global correctors will be constructed as a.s. limits of the local correctors tilted by appropriate ($\omega$-dependent) constants or affine functions. In this process the regularity property of the local Green function $\locg_\omega$ is crucial.  

The global correctors in Theorem~\ref{thm:global_krt} have the same sizes as the local correctors in Theorem~\ref{thm:new-corrector-bd} except in $d=2$ when \eqref{item:gkrt-2} only provides
a bound with an additional factor of size $(\log|x|)^{3/2}$.  We also lost some stochastic integrability in the exponents compared to Theorem~\ref{thm:new-corrector-bd}. 

Note that although the global corrector is unique up to the shift of an affine function,  an affine shift with $\omega$-dependent coefficients could drastically change the moment estimates of $\krt$.
\end{remark}

\begin{proof}
[Proof of the uniqueness in Theorem~\ref{thm:global_krt} assuming existence]

Suppose there are two global correctors $\phi_1,\phi_2$ with properties  \eqref{item:gkrt-2}-\eqref{item:gkrt-4} in Theorem~\ref{thm:global_krt}. Then  $u=\phi_1-\phi_2$ is a $\omega$-harmonic function on the whole $\Z^d$.  Moreover, when $d\ge 3$,
\begin{align*}
P(\max_{B_R}|u|>\error R)
&\lesssim 
\sum_{x\in B_R}P(|u(x)|>\error R)\\
&\le 
\sum_{x\in B_R}\mb E[\exp(c|\tfrac{u(x)}{\mu(R)}|^{1/3})]\exp(-c|\tfrac{\error R}{\mu(R)}|^{1/3})\\
&\lesssim
\exp(-c_\error R^{1/6}), \quad \forall \error>0, d\ge 3.
\end{align*}
Hence, by Borel-Cantelli's lemma, $\lim_{R\to\infty}\max_{B_R}|u|/R=0$ almost surely.
By \eqref{eq:c2}, this sublinear estimate of $\max_{B_R}|u|$ implies that $u$ is almost surely a constant.

When $d=2$, similar argument gives, for all $\varepsilon>0$,
\begin{align*}
P(\max_{B_R}|u|>\error R^2)
&\lesssim 
\sum_{x\in B_R}\mb E[\exp(c|\tfrac{u(x)}{R}|^{1/3})]\exp(-c|\tfrac{\error R^2}{R}|^{1/3})
\lesssim
\exp(-c_\error R^{1/6}).
\end{align*}
Hence, by Borel-Cantelli's lemma, $\lim_{R\to\infty}\max_{B_R}|u|/R^2=0$ almost surely.
By \eqref{eq:c2}, this subquadratic estimate of $\max_{B_R}|u|$ implies that $\nabla^2 u=0$ almost surely. That is,  $u$ is a.s. an affine function (with coefficients possibly dependent on $\omega$).
\end{proof}

\subsection{Stationary global corrector in \texorpdfstring{$d\ge 5$}{d>4}}
As a consequence of Theorem~\ref{thm:quant-ergo}, we can show the existence of a stationary corrector in $d\ge 5$.

In the continuous PDE setting,  the existence of the stationary corrector in $d\ge5$ and stochastic integrability \eqref{item:gkrt-2} with $p=\tfrac12$ was proved in \cite[Theorem~7.1]{AL-17}.  
In the discrete setting, the existence of the stationary corrector in $d\ge5$ 
with $L^2$-stochastic integrability was proved in \cite[Corollary 7]{GT-22}.
\begin{proof}
[Proof of Theorem~\ref{thm:global_krt} for $d\ge 5$]
Without loss of generality, assume $E_\Q[\psi]=0$. 

When $d\ge 5$,  we let $\krt_\omega(x)=\qe^x[\int_0^\infty \psi(\evp s)\dd s]$.  The existence of  $\krt_\omega$ (as an a.s.  limit of $\qe^x[\int_0^n \psi(\evp s)\dd s]$) follows immediately from 
Theorem~\ref{thm:var_decay}.  It clearly  solves \eqref{global-corrector-eq}.  Further, by Fatou's lemma and 
Theorem~\ref{thm:quant-ergo}, for $p\in(0,\tfrac{2d}{3d+2})$,
\[
\mb E[\exp(c|\krt|^p)]
\le 
\liminf_{n\to\infty}
\mb E[\exp(c|\int_0^n \psi(\evp s)\dd s|^p)]\le C_p.
\]
The stationarity of $\krt$ follows from the stationarity of $\apk$.
%
\end{proof}

\subsection{Global corrector in $d=3,4$ with stationary gradient}

\begin{proof}
[Proof of Theorem~\ref{thm:global_krt} for $d=3,4$]

Without loss of generality, assume $\bar\psi=0$.  
Recall $\lock_R=\lock_{\omega,R}(x;\eta_R,\psi)$ in Definition~\ref{def:loc-global}.
For $R>1,\omega\in\Omega$, let
\[
\phi_R(x)=\phi_{\omega,R}(x):=\lock_R(x)-\lock_R(0), \quad x\in\Z^d.
\]
In what follows we will show that, by taking $R\to\infty$, $\phi_R$ will converge to a desired global corrector up to a subsequence.

\noindent{\bf Step 1.} {\it (Moment bounds.)} First, when $d\ge 3$, we will obtain the following moment bounds: for any $x\in\Z^d$,  (note that $\delta(\cdot)\equiv 1$ for $d\ge 3$.)
\begin{equation}\label{eq:230215-2}
\mb E\left[\exp\big(
C|\tfrac{\phi_R(x)}{\mu(|x|)}|^p
\big)\right]
\le C_p,  \quad\forall p\in(0,\tfrac{2d}{3d+4}),R\ge |x|^2\vee 1.
\end{equation}
\begin{equation}
\label{eq:230226-1}
\mb E[\exp(c|\nabla\phi_R(x)/\delta(|x|)|^q)]\le C_q \quad \forall
q\in(0,\tfrac{d}{2d+2}).
\end{equation}
Since $\nabla\phi_R=\nabla\lock_R$,  display \eqref{eq:230226-1} follows directly from Theorem~\ref{thm:new-corrector-bd}\eqref{item:lock-c01}.

We will prove \eqref{eq:230215-2} via sensitivity estimates. Indeed,
by \eqref{eq:sensiti-formu-phi},  
\begin{align}
\label{eq:phiR-sensi}
|\vd{y}\phi_R(x)|
&=
\abs{[\vd y \psi(y)-\tfrac12\tr(\vd y a\nabla^2\lock_{\omega'_y})(y)][\locg_{\omega}(x,y)-\locg_{\omega}(0,y)]}\nn\\
&\stackrel{Lemma~\ref{lem:phi-bd-c11}\eqref{item:new-corr-2}}\lesssim
\tx_y^{'2}\abs{\locg_{\omega}(x,y)-\locg_{\omega}(0,y)}
\quad \text{ for }x,y\in\Z^d.
\end{align}
We consider two cases: $|y|\le 2|x|$ and $|y|>2|x|$.

When $|y|\le 2|x|$, by \eqref{eq:phiR-sensi} and Proposition~\ref{prop:new-corr-prop}, (recall that $\nu(r)=r^{2-d}$ for $d\ge 3$)
\[
|\vd y\phi_R(x)|\lesssim 
\tx_y^{'2}\tx_y^{d-1}(\nu(|x-y|)+\nu(|y|)).
\]
When $|y|>2|x|$, noting that $|y|-\tfrac{|y|\wedge R}{2}\asymp |y|$ and that the function $z\mapsto \locg_{\omega}(z,y)$ is $\omega$-harmonic on $B_{2R}$, by \eqref{eq:phiR-sensi} and Theorem~\ref{thm:c11},
\[
|\vd y\phi_R(x)|\lesssim 
\tx_y^{'2}
\tfrac{|x|+\tx}{|y|\wedge R}\osc_{B_{|y|\wedge R}}\locg_{\omega}(\cdot,y)
\lesssim
\tx_y^{'2}\tx_y^{d-1}
\tfrac{|x|+\tx}{|y|\wedge R}e^{-c|y|/R}\nu(|y|).
\]
Fix $x$ and let $f(y):=(\nu(|x-y|)+\nu(|y|))\mathbbm1_{|y|\le 2|x|}+\tfrac{|x|+1}{|y|\wedge R}e^{-c|y|/R}\nu(|y|)\mathbbm1_{|y|>2|x|}$. Combining the two cases above, we have, for $n\ge 1$,
\[
\mb E[c|\vd y\phi_R(x)/f(y)|^n]\lesssim
\mb E[c\tx^{(d+2)n}].
\]
Note that \eqref{eq:nu2-sum} implies $\sum_{y\in B_r}\nu(|y|)^2\lesssim\mu(r)^2$.  Thus, for $x\neq 0$, $d\ge 3$, $R\ge |x|^2$,
\begin{align*}
\sum_y f(y)^2
&\lesssim
\sum_{y:|y|\le2|x|}\nu(|x-y|)^2+\nu(|y|)^2
+\sum_{y:|y|>2|x|}\frac{|x|^2}{(|y|\wedge R)^2}e^{-c|y|/R}\nu(|y|)^2\\
&\lesssim
\sum_{y\in B_{3|x|}}\nu(|y|)^2
+|x|^2\sum_{2|x|<|y|<R}\frac{1}{|y|^2}\nu(|y|)^2+
\frac{|x|^2}{R^2}\sum_{|y|>R}e^{-c|y|/R}\nu(|y|)^2\\
&\lesssim
\mu(|x|)^2
+|x|^2\int_{2|x|}^R\frac{1}{r^2}r^{2(2-d)}r^{d-1}\dd r+\frac{|x|^2}{R^2}\int_R^\infty e^{-cr/R}r^{2(2-d)}r^{d-1}\dd r\\
&\lesssim 
\mu(|x|)^2+|x|^2 R^{2-d}\lesssim\mu(|x|)^2.
\end{align*}
Hence, applying Lemma~\ref{lem:efron-stein} to $Z=\phi_R(x)$ we have,   for $R\ge |x|^2$, $p\in(0,\tfrac{2d}{3d+4})$, 
\[
\mb E\left[\exp\big(
C|(Z-\mb EZ)/\mu(|x|)|^{p}
\big)\right]
\le C_p, \text{ for }x\neq 0.
\]
Since by Lemma~\ref{lem:ball-modif-a-bit}, $|\mb E[Z]|\lesssim|x|\tfrac{\mu(R)}{R}\lesssim\mu(|x|)$, display \eqref{eq:230215-2} is proved.

\noindent{\bf Step 2.} {\it (Point-wise a.s. convergence.)} Next, we will show that, for any $x\in\Z^d$, 
every subsequence of  $\{\phi_R(x)\}_{R>1}$ contains an
 a.s. convergent subsequence. To this end, it suffices to show that (cf. \cite[Theorem~5, pg 258]{Shiryaev-96}) the sequence  $\{\phi_R(x)\}_{R>1}$ is {\it Cauchy  in probability}, i.e.,  for every $\varepsilon>0$,
\begin{equation}\label{eq:cauchy-in-prob}
\mb P(|\phi_n(x)-\phi_m(x)|>\varepsilon)\to 0 \quad \text{ as }m,n\to\infty.
\end{equation}
Observe that for any $n>m>1$, the function $x\mapsto\phi_n(x)-\phi_m(x)$ is $\omega$-harmonic in $B_{2m}$. Hence, by Theorem~\ref{thm:c11},  when $m>|x|^2$, 
\begin{align*}
|\phi_n(x)-\phi_m(x)|
\le\osc_{\bar B_{|x|}}(\phi_n-\phi_m)
\lesssim
\tfrac{|x|+\tx}{\sqrt m}\tfrac{1}{\#B_{\sqrt m}}\sum_{z\in B_{\sqrt m}}
|\phi_n(z)-\phi_m(z)|
\end{align*}
and thus, by \eqref{eq:230215-2} and H\"older's inequality we have, for $x\neq 0$, $n>m>|x|^2$, 
\begin{align*}
\norm{\phi_n(x)-\phi_m(x)}_{L^2(\mb P)}
&\lesssim
\tfrac{|x|+1}{\sqrt m}\mu(\sqrt m)
\stackrel{m\to\infty}\to 0.
\end{align*}
Hence \eqref{eq:cauchy-in-prob} follows by Chebyshev's inequality.
\medskip

\noindent{\bf Step 3.} {\it (Existence of the global corrector.)}
Since $\Z^d$ is a countable set,  by Step 2 and a diagonal argument, we can select a subsequence  $\{\phi_{R_n}\}_{n\in\N}$ such that $\phi_{R_n}(x)$ converges $\mb P$-almost surely to a function $\krt(x)=\krt_\omega(x)$ for all $x\in\Z^d$ as $n\to\infty$.  

Clearly, $\krt_\omega$ is a global corrector.

Moreover, by Fatou's lemma and \eqref{eq:230215-2} \eqref{eq:230226-1} in Step 1, for any 
$p\in(0,\tfrac{2d}{3d+4}), q\in(0,\tfrac{d}{2d+2})$, and any $x\in\Z^d\setminus\{0\}$,
\begin{align*}
\mb E\left[\exp\big(
C|\tfrac{\krt(x)}{\mu(|x|)}|^{p}
\big)\right]
&\le 
\liminf_{n\to\infty}
\mb E\left[\exp\big(
C|\tfrac{\phi_{R_n}(x)}{\mu(|x|)}|^{p}
\big)\right]
\le 
C_p,\\
\mb E[\exp(c|\nabla\krt(x)|^q)]
&\le 
\liminf_{n\to\infty} \mb E[\exp(c|\nabla\phi_{R_n}(x)|^q)]\le C_q.
\end{align*}

\noindent{\bf Step 4.} {\it (Stationarity of the gradient.)}
Observe that, for $x\in\Z^d$, the map $y\mapsto\krt_\omega(x+y)$ is a global corrector in the environment $\theta_x\omega$. Moreover, since $\krt_\omega$ is unique up to an additive constant (which may depend on $\omega$), the gradient field $\{\nabla\krt_\omega(x):x\in\Z^d\}$ is unique, and hence we have $\nabla\krt_{\theta_x\omega}(\cdot)=\nabla\krt_\omega(x+\cdot)$ for a.e. $\omega$.  In particular, $\nabla\krt_{\theta_x\omega}(0)=\nabla\krt_\omega(x)$ for a.e. $\omega$ and all $x\in\Z^d$.

Therefore,  $\nabla\krt_\omega(x)$ and $\nabla\krt_\omega(0)$ are identically distributed for all $x\in\Z^d$ by the stationarity of $\mb P$.
\end{proof}

\subsection{Global corrector in $d=2$ with stationary second order differences}


 Recall $\locg, \eta_R$ in \eqref{eq:LG}, \eqref{eq:eta-R}.  Let $R>1$ be fixed. For any $x,y\in\Z^d$, consider the potential kernel $\loca=\loca_\omega(\cdot,\cdot;\eta_R)$ corresponding to the local corrector:
\[
\loca(x,y):=\locg(y,y;\eta_R)-\locg(x,y;\eta_R).
\]
We will first show the following estimate.
\begin{lemma}
\label{lem:loca_bound}
Assume {\rm (A1), (A2)}, and fix $R>1$. For $\error>0$, let $\tx$ be the same as in Theorem~\ref{thm:hk-bounds}. 
For $\mb P$-a.s. $\omega$ and $x,y\in B_{R/2}$, we have
\[
0\le \loca(x,y)\lesssim \tx_y \log(|x-y|\vee 2)\quad \text{when }d=2.
\]
\end{lemma}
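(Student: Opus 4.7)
My plan splits into a nonnegativity step and an upper-bound step. The nonnegativity is a direct probabilistic observation: applying the strong Markov property of $(X_n)$ at the first hitting time $\tau_y := \inf\{n \ge 0 : X_n = y\}$ to the representation \eqref{eq:phi-expression} gives
\[
\locg_\omega(x,y;\eta_R) = \locg_\omega(y,y;\eta_R)\, P_\omega^x(\tau_y \le T),
\]
whence $\loca(x,y) = \locg(y,y;\eta_R)\bigl(1 - P_\omega^x(\tau_y \le T)\bigr) \ge 0$.

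For the upper bound, the idea is to compare $\loca(\cdot,y)$ with the classical potential kernel $A^\omega(\cdot,y)$ of \eqref{eq:def-potential}. By translation of the environment, $A^\omega(z,y) = A^{\theta_y\omega}(z-y,0)$, so Theorem \ref{thm:Green-bound} yields $A^\omega(z,y) \lesssim \tx_y \log(|z-y|+1)$. Setting $h(z) := \loca(z,y) - A^\omega(z,y)$, I will verify (a) $h(y) = 0$ (immediate from the definitions) and (b) $h$ is $\omega$-harmonic on $B_{2R}$. For (b): equation \eqref{eq:LG} gives $L_\omega \loca(\cdot,y) = -\tfrac{1}{R^2}\locg(\cdot,y)\eta_R + \mathbbm{1}_{\{\cdot=y\}}$, which reduces to $\mathbbm{1}_{\{\cdot=y\}}$ on $B_{2R}$ since $\eta_R \equiv 0$ there; differentiating the heat-kernel representation $A^\omega(\cdot,y) = \int_0^\infty[p_t^\omega(y,y) - p_t^\omega(\cdot,y)]\,dt$ in the first variable, together with $L_\omega p_t^\omega(\cdot,y) = \partial_t p_t^\omega(\cdot,y)$ and $p_t^\omega(\cdot,y) \to 0$, gives $L_\omega A^\omega(\cdot,y) = \mathbbm{1}_{\{\cdot=y\}}$ on $\Z^2$. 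Subtracting, $L_\omega h = 0$ on $B_{2R}$.

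With (a) and (b) in hand, the Krylov--Safonov H\"older estimate \eqref{eq:osc} applied to the $\omega$-harmonic $h$ on $B_R(y) \subset B_{2R}$ (valid since $y \in B_{R/2}$) yields, for some $\gamma = \gamma(d,\kappa) > 0$,
\[
|h(x)| = |h(x) - h(y)| \lesssim \bigl(|x-y|/R\bigr)^\gamma \max_{B_R(y)} |h|.
\]
To bound the sup I use $0 \le \loca(\cdot,y) \le \locg(y,y;\eta_R) \lesssim \tx_y \log R$ from Proposition \ref{prop:new-corr-prop}\eqref{item:new-corr-3} and $|A^\omega(\cdot,y)| \lesssim \tx_y \log R$ on $B_R(y)$, giving $\max_{B_R(y)}|h| \lesssim \tx_y \log R$. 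A short calculus step---set $t = \log R - \log(|x-y| \vee 2) \ge 0$ and use $\sup_{s \ge 0} s\, e^{-\gamma s} < \infty$---shows $(|x-y|/R)^\gamma \log R \lesssim \log(|x-y| \vee 2)$ uniformly for $|x-y| \le R$, $R \ge 2$. Hence $|h(x)| \lesssim \tx_y \log(|x-y| \vee 2)$, and then $\loca(x,y) = A^\omega(x,y) + h(x) \lesssim \tx_y \log(|x-y| \vee 2)$, completing the bound.

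The main obstacle I foresee is item (b), specifically the identity $L_\omega A^\omega(\cdot,y) = \mathbbm{1}_{\{\cdot=y\}}$ in the recurrent two-dimensional setting, which requires justifying the interchange of $L_\omega$ with the improper time integral and the long-time decay $p_t^\omega(z,y) \to 0$. Once this PDE fact is secured, the remainder reduces to the a priori bounds of Theorem \ref{thm:Green-bound} and Proposition \ref{prop:new-corr-prop}\eqref{item:new-corr-3} together with the Krylov--Safonov estimate \eqref{eq:osc}.
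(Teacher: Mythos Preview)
Your proof is correct but follows a genuinely different route from the paper's. The paper establishes the identity $\loca(x,y)\asymp G_{|x-y|}^{\theta_y\omega}(0,0)$ by a Harnack chain on $\partial B_{|x-y|}(y)$ together with an optional-stopping argument that identifies $\qe^y[\loca(Y_{\tau_{|x-y|,y}},y)]$ with the finite-ball Green function $G_{|x-y|}^{\theta_y\omega}(0,0)$; the upper bound then falls out directly from Theorem~\ref{thm:Green-bound}. Your argument instead uses the global potential kernel $A^\omega(\cdot,y)$ as a comparison object: the difference $h=\loca(\cdot,y)-A^\omega(\cdot,y)$ is $\omega$-harmonic on all of $B_{2R}$ (including at $y$, since the Dirac masses cancel), vanishes at $y$, and is bounded by $C\tx_y\log R$ on $B_R(y)$ via Proposition~\ref{prop:new-corr-prop}\eqref{item:new-corr-3} and Theorem~\ref{thm:Green-bound}; the Krylov--Safonov H\"older estimate then converts this into the desired bound after the elementary observation $(|x-y|/R)^\gamma\log R\lesssim\log(|x-y|\vee 2)$.

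The paper's approach yields a two-sided comparison $\loca(x,y)\asymp G_{|x-y|}^{\theta_y\omega}(0,0)$, which is structurally informative and gives the lower bound (and hence nonnegativity) for free. Your approach is more elementary in that it avoids setting up the Harnack chain and the stopping-time identity, and your separate strong-Markov argument for nonnegativity is clean. The concern you flag about $L_\omega A^\omega(\cdot,y)=\mathbbm{1}_{\{\cdot=y\}}$ is not a real obstacle: since $L_\omega$ is a finite linear combination of point evaluations and each $A^\omega(z,y)$ for $z$ near $x$ is a convergent integral by Theorem~\ref{thm:Green-bound}, the interchange is immediate, and $p_t^\omega(x,y)\to 0$ follows from Theorem~\ref{thm:hk-bounds}\eqref{item:hke-expmmt}.
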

\begin{proof}
Let $x,y\in B_R$ be fixed. We only consider the non-trivial case $y\neq x$. 

Note that by \eqref{eq:LG} and the definition of $\eta_R$ in \eqref{eq:eta-R}, 
the function $z\mapsto\loca(z,y)$ is $\omega$-harmonic for $z\in B_{7R/3}\setminus\{y\}$.  Hence, applying the Harnack inequality (Theorem~\ref{thm:harnack-para}) for $\omega$-harmonic functions to a constant numbers of balls centered on $\partial B_{|x-y|}(y)$ with radii $|x-y|/2$,  we have
\[
\loca(z,y)\asymp\loca(x,y) \quad\text{ for all }z\in\partial B_{|x-y|}(y).
\]
In particular, letting $\tau_{r,y}=\inf\{t:Y_t\notin B_r(y)\}$, we have
\begin{equation}
\label{eq:loca-harnack}
\loca(x,y)\asymp\qe^y[\loca(Y_{\tau_{|x-y|,y}}, y)].
\end{equation}

We claim that (Recall $G_R^\omega$ in \eqref{def:green})
\begin{equation}\label{eq:loca-claim}
\qe^y[\loca(Y_{\tau_{|x-y|,y}}, y)]
=G_{|x-y|}^{\theta_y\omega}(0,0).
\end{equation}
Indeed, the function 
\[
v(z):=
\locg(z,y)-\qe^z[\locg(Y_{\tau_{|x-y|,y}},y)]
\]
satisfies $L_\omega v(z)=-\mathbbm 1_{z=y}$ for $z\in B_{|x-y|}(y)$ and $v|_{\partial B_{|x-y|}(y)}=0$, whereas 
$u(z)=G_{|x-y|}^{\theta_y\omega}(z-y,0)$ 
satisfies $L_\omega u(z)=-\mathbbm 1_{z=y}$ for $z\in B_{|x-y|}(y)$ and $u|_{\partial B_{|x-y|}(y)}=0$. Hence
$v(z)=u(z)$ for all $z\in\bar{B}_{|x-y|}(y)$. In particular, 
\begin{align*}
u(y)=v(y)=\locg(y,y)-\qe^y[\locg(Y_{\tau_{|x-y|,y}},y)]=\qe^y[\loca(Y_{\tau_{|x-y|,y}}, y)].
\end{align*}
This  implies \eqref{eq:loca-claim}.

Finally, the lemma follows from \eqref{eq:loca-harnack}, \eqref{eq:loca-claim} and  Theorem~\ref{thm:Green-bound}.
\end{proof}

\begin{lemma}
\label{lem:230313}
Recall $\nabla$ in Definitions~\ref{def:differences}.
Let $\nabla^+:=(\nabla_{e_i})_{1\le i\le d}$. 
For any $\omega$-harmonic function $u$ on $B_R$, we have
\[
\abs{u(x)-x\cdot\nabla^+u(0)-u(0)}\lesssim
\frac{\tx^2|x|^3}{R^2}\osc_{B_R}u \quad\text{ for all }x\in B_R.
\]
\end{lemma}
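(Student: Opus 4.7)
The plan is to apply the large-scale $C^{1,1}$ oscillation estimate of Theorem~\ref{thm:c11} to $u$ at a carefully chosen intermediate scale $r$. Since $L_\omega u \equiv 0$ on $B_R$, both $\psi$ and $f$ vanish, so the error term $A_2$ in \eqref{eq:oscillation-estimates} is zero; with $j=2$ this reduces to
\[
\inf_{p \in \fct_1} \osc_{B_r}(u - p) \lesssim \frac{r^2}{R^2}\,\osc_{B_R} u,
\]
valid for every $\tx \le r < R$. The bound we want will come out of applying this at the single scale $r := |x| \vee \tx$.

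First I would dispense with the trivial and degenerate cases. If $x=0$ the left side vanishes. If $\tx \ge R$ then $\tx^2|x|^3/R^2 \gtrsim |x|^3 \ge |x|$ for $|x|\ge 1$, while the crude bound $|u(x)-u(0)| + |x \cdot \nabla^+u(0)| \lesssim (1+|x|)\osc_{B_R} u$ is already stronger than what is required. So I will assume $1 \le |x| < R$ and $\tx < R$, which forces $r = |x|\vee\tx \in [\tx, R)$ and hence Theorem~\ref{thm:c11} is applicable at scale $r$.

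Now choose an affine function $p^*(y) = a^* + b^*\cdot y$ with $\osc_{B_r}(u - p^*) \le 2\inf_{p\in\fct_1}\osc_{B_r}(u-p) \lesssim r^2 R^{-2} \osc_{B_R} u$. Since $r \ge 1$, both $x$ and each lattice vector $e_i$ lie in $B_r$. Evaluating the oscillation at the pairs $\{0,x\}$ and $\{0,e_i\}$ yields
\[
|u(x) - u(0) - b^*\!\cdot x| \lesssim \frac{r^2}{R^2}\osc_{B_R} u,
\qquad
|\nabla_{e_i} u(0) - b^*_i| \lesssim \frac{r^2}{R^2}\osc_{B_R} u,
\]
and summing the second estimate against $x$ bounds $|x\cdot\nabla^+u(0) - b^*\cdot x| \lesssim |x|r^2 R^{-2}\osc_{B_R} u$. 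A triangle inequality gives
\[
|u(x)-u(0)-x\cdot\nabla^+u(0)| \lesssim \frac{(1+|x|)r^2}{R^2}\osc_{B_R} u \lesssim \frac{|x|\,r^2}{R^2}\osc_{B_R} u,
\]
using $|x|\ge 1$. Splitting on $r$: if $|x|\ge \tx$ then $r=|x|$ and $|x|r^2 = |x|^3$; if $|x|<\tx$ then $r=\tx$ and $|x|r^2 = |x|\tx^2 \le \tx^2|x|^3$ (again by $|x|\ge 1$). Either way $|x|r^2 \lesssim \tx^2|x|^3$, producing the claimed bound.

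There is no serious obstacle; the lemma is a direct consequence of the large-scale $C^{1,1}$ regularity once the correct intermediate scale $r = |x|\vee\tx$ is identified. The only subtle point is that when $|x| < \tx$ one cannot apply the oscillation estimate at scale $|x|$ (the theorem requires $r \ge \tx$), so one is forced up to scale $\tx$, and it is precisely this forced step that introduces the factor $\tx^2$ in the final estimate.
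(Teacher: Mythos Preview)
Your proof is correct and follows essentially the same route as the paper: both apply the $j=2$ case of Theorem~\ref{thm:c11} at the scale $|x|\vee\tx$, use the invariance of the expression under subtraction of an affine function, and split according to whether $|x|\ge\tx$. One trivial technical point: with $r=|x|\vee\tx$ the point $x$ can lie on $\partial B_r$ rather than in $B_r$, so to evaluate the oscillation at $x$ (and at $e_i$) you should take, say, $r=(|x|+2)\vee\tx$; the paper avoids this by defining $B^x$ as the smallest ball centered at $0$ that \emph{contains} $x$.
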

\begin{proof}
There is nothing to prove when $x=0$. 

When $x\neq 0$, let $B^x$ denote the smallest ball centered at $0$ that contains $x$.
Observe that, for any affine function $\ell(x)=a\cdot x+b$,  with $u_\ell=u-\ell$, we have $u(x)-x\cdot\nabla^+u(0)-u(0)=u_\ell(x)-x\cdot\nabla^+u_\ell(0)-u_\ell(0)$. Hence
\[
\abs{u(x)-x\cdot\nabla^+u(0)-u(0)}
\le \der{B^x}^2(u)
\lesssim
\frac{|x|(\tx+|x|)^2}{R^2}\osc_{B_R}u
\]
where we used Theorem~\ref{thm:c11} in the last inequality.
The lemma is proved.
\end{proof}

\begin{proof}
[Proof of Theorem~\ref{thm:global_krt} for $d=2$]
Without loss of generality, assume $\bar\psi=0$.  
Recall $\nabla$ and $\lock_R=\lock_{\omega,R}(x;\eta_R,\psi)$ in Definitions~\ref{def:differences} and \ref{def:loc-global}. 
For $R>1,\omega\in\Omega$,  set
\begin{equation}\label{eq:phiR-construct}
\phi_R(x)=\phi_{\omega,R}(x):=\lock_R(x)-x\cdot\nabla^+\lock_R(0)-\lock_R(0), \quad x\in\Z^d,
\end{equation}
where $\nabla^+=(\nabla_{e_i})_{1\le i\le d}$. 
In what follows we will show that, by taking $R\to\infty$, $\phi_R$ will converge to a desired global corrector up to a subsequence.

\noindent{\bf Step 1.} {\it (Moment bound for $\phi_R$.)} We will establish the moment bound for $d=2$:
\begin{equation}\label{eq:2d-gkrt-mmb}
\mb E\left[\exp\big(
C|\tfrac{\phi_R(x)}{|x|\log(|x|\vee 2)^{3/2}}|^p
\big)\right]
\le C_p,  \quad\text{ for }p\in(0,\tfrac{1}{3}),0<3|x|<R^{1/3}.
\end{equation}

To prove \eqref{eq:2d-gkrt-mmb},  writing $u(z)=u_{\omega;y}(z)=\loca(z,y)$,  by formula \eqref{eq:sensiti-formu-phi} we have
\begin{align}\label{eq:230308-1}
\abs{\vd y\phi_R(x)}&=
|\vd y \psi(y)-\tfrac12\tr(\vd y a\nabla^2\lock_{\omega'_y})(y)||\locg_{\omega}(x,y)-
x\cdot\nabla^+\locg(0,y)-\locg(0,y)|\nn\\ 
&\stackrel{Lemma~\ref{lem:phi-bd-c11}\eqref{item:new-corr-2}}\lesssim
\tx_y^{'2}\abs{
\locg_{\omega}(x,y)-
x\cdot\nabla^+\locg(0,y)-\locg(0,y)}\nn\\
&=\tx_y^{'2}\abs{
u(x)-x\cdot\nabla^+u(0)-u(0)}.
\end{align}
We consider two cases: $|y|\le 3|x|\le R^{1/3}$ and $3|x|< |y|$. 

When $x\neq 0$ and $|y|\le 3|x|\le R^{1/3}$, by Lemma~\ref{lem:loca_bound} and Theorem~\ref{thm:c11},
\begin{align*}
&|u(x)|+|u(0)|\lesssim\tx_y(\log(|x-y|\vee2)+\log(|y|\vee2))\lesssim\tx_y \log(|x|+2)\\
&|\nabla^+ u(0)|\lesssim \frac{\tx\tx_y}{|y|+1}\log(|y|+2).
\end{align*}
Thus, by \eqref{eq:230308-1}, 
\begin{equation}
\label{eq:230308-2}
\abs{\vd y\phi_R(x)}\lesssim
\tx\tx_y\tx_y^{'2}[\log(|x|+2)+\tfrac{|x|}{|y|+1}\log(|y|+2)].
\end{equation}

When $3|x|<|y|$,  noting that $u$ is $\omega$-harmonic in $B_{|y|}$, 
by Lemma~\ref{lem:230313}, Proposition~\ref{prop:new-corr-prop} and Theorem~\ref{thm:c11}, we obtain
\[
\abs{u(x)-x\cdot\nabla^+u(0)-u(0)}
\lesssim
\frac{|x|^3\tx^2\tx_y}{(|y|\wedge R)^2}\log(\frac {2R}{|y|\wedge R})e^{-c|y|/R}.
\]
As a result, by \eqref{eq:230308-1}, when $3|x|<|y|$,
\begin{align}
\label{eq:230308-3}
\abs{\vd y\phi_R(x)}
&\lesssim
\tx_y(\tx\tx_y')^2\frac{|x|^3}{(|y|\wedge R)^2}\log(\frac{2R}{|y|\wedge R})e^{-c|y|/R}
\end{align}
Combining \eqref{eq:230308-2}, \eqref{eq:230308-3}, with
\[
f(y):=\left\{
\begin{array}{lr}
&\log(|x|+2)+\tfrac{|x|}{|y|+1}\log(|y|+2) \quad\text{ when }|y|\le 3|x|,\\
&
\frac{|x|^3}{(|y|\wedge R)^2}\log(\frac{2R}{|y|\wedge R})e^{-c|y|/R} \quad\text{ when }|y|>3|x|
\end{array}
\right.
\]
we obtain, for $x$ with $3|x|<R^{1/3}$ and $n\ge 1$,
\[
\mb E[|\vd y\phi_R(x)/f(y)|^n]
\lesssim\mb E[\tx^{5n}].
\]
Computations show that, for $3|x|<R^{1/3}$, 
\[
\sum_y f(y)^2\lesssim 
|x|^2\log(|x|\vee 2)^3+\frac{|x|^6}{R^2}\lesssim
|x|^2\log(|x|\vee 2)^3.
\]
Hence,  setting $F(x)=|x|\log(|x|\vee 2)^{3/2}$ and applying Lemma~\ref{lem:efron-stein} to $Z=\phi_R(x)$ we have,  when $3|x|<R^{1/3}
$, $p\in(0,\tfrac{2d}{10+d})$,
\[
\mb E\left[\exp\big(
C|(Z-\mb EZ)/F(x)|^{p}
\big)\right]
\le C_p, \text{ for }x\neq 0.
\]
Furthermore,  by \eqref{eq:230225-2} we get $|EZ|\lesssim |x|$. Display \eqref{eq:2d-gkrt-mmb} is proved.

\medskip

\noindent{\bf Step 2.} {\it (Existence of the global corrector.)} Next, we will show the existence of a global corrector $\krt=\krt_\omega$ (as an a.s.  subsequential limit of $(\phi_R)_{R>0}$) in $d=2$ with moment bound \eqref{item:gkrt-2}. 

Similar to Steps 2 and 3 in the proof for the cases $d=3,4$,  it suffices to show \eqref{eq:cauchy-in-prob}.  Observe that, for any $n>m>1$, the function $u(x)=\phi_n(x)-\phi_m(x)$ is $\omega$-harmonic in $B_{2m}$ with $u(0)=0$.  
Hence, by Lemma~\ref{lem:230313}, when $3|x|<m^{1/3}$
\begin{align*}
\abs{\phi_n(x)-\phi_m(x)}
\lesssim
\frac{\tx^2|x|^3}{m^2}\osc_{B_{m/2}}(\phi_n-\phi_m)
\lesssim
\frac{\tx^2|x|^3}{m^2}\frac{1}{\#B_{m}}\sum_{z\in B_{m}}|\phi_n(z)-\phi_m(z)|
\end{align*}
and thus, by \eqref{eq:2d-gkrt-mmb} and H\"older's inequality we have, for $x\neq 0$, $n>m>27|x|^3$,
\[
\norm{\phi_n(x)-\phi_m(x)}_{L^2(\mb P)}
\lesssim
\frac{|x|^3}{m^2}m(\log m)^{3/2}
\stackrel{m\to\infty}\to 0.
\]
Hence, when $d=2$, \eqref{eq:cauchy-in-prob} follows by Chebyshev's inequality.

\noindent{\bf Step 3.} {\it (Stationarity and moment bound of $\nabla^2\krt$.)}
Since $\krt_\omega$ is unique up to adding an affine function (whose coefficients may depend on $\omega$), the field $\{\nabla^2\krt_\omega(x):x\in\Z^d\}$ is $\mb P$-a.s. unique.  In other words, this field is uniquely determined by $\omega$, and hence its stationarity follows from the stationarity of $\mb P$.

To obtain the moment bound of $\nabla^2\krt$,  observe that $\nabla^2\phi_R=\nabla^2\lock_{\omega,R}$.  Since $\nabla^2\krt$ is an a.s.  subsequential limit of $\nabla^2\phi_R$, by Lemma~\ref{lem:phi-bd-c11} and taking $R\to\infty$, we get, for $\mb P$-a.e.  $\omega$,
\begin{equation}\label{eq:krt-nabla2}
\abs{\nabla^2\krt(x)}\lesssim\tx_x^2\norm{\psi}_\infty \quad\text{ for all }x\in\Z^d.
\end{equation}
The bound in Theorem~\ref{thm:global_krt}\eqref{item:gkrt-2nd-der} follows.

\noindent{\bf Step 4.} {\it (Moment bound for $\nabla\krt$.)} 
Without loss of generality, we only derive the moment bound for $\nabla^+\krt$,  since $\nabla_{-e_i}\krt(x)=\nabla_{e_i}\krt(x-e_i)$ for $i=1,\ldots,d$.

By formula \eqref{eq:sensiti-formu-phi} we have, for $x,y\in\Z^d$,
\begin{align*}
\vd y\nabla^+\phi_R(x)=
-[\vd y \psi(y)-\tfrac12\tr(\vd y a\nabla^2\lock_{\omega'_y})(y)][\nabla^+\locg_{\omega}(x,y)-
\nabla^+\locg_\omega(0,y)],
\end{align*}
where $\nabla^+$ only acts on the first coordinate of $\locg$, i.e., $\nabla^+\locg(x,y)=\nabla^+\locg(\cdot,y)|_{\cdot=x}$.
Taking a subsequential of limit $R\to\infty$ we obtain, almost surely,
\[
\vd y\nabla^+\krt(x)
=
[\vd y \psi(y)-\tfrac12\tr(\vd y a\nabla^2\krt_{\omega'_y})(y)][\nabla^+ A(x,y)-
\nabla^+A(0,y)]
\]
where $A(x,y)$ is as in \eqref{eq:def-potential}.
Further, by \eqref{eq:krt-nabla2}, Theorem~\ref{thm:Green-bound}, and Theorem~\ref{thm:c11},  we get
\begin{align*}
\abs{\vd y\nabla^+\krt(x)}
\lesssim
\tx_{x+y}^{'2}[\frac{\tx_{x,y}^2}{|x-y|+1}\log(|x-y|\vee 2)+\frac{\tx_{0,y}^2}{|y|+1}\log(|y|\vee 2)],
\end{align*}
where $\tx_{x,y}=\tx_x+\tx_y$. Moreover, for any affine function $\ell$, with $A_\ell(\cdot,y)=A(\cdot,y)+\ell$, we have 
\[
\nabla^+ A(x,y)-
\nabla^+A(0,y)=\nabla^+ A_\ell(x,y)-
\nabla^+A_\ell(0,y)\le \osc_{B_{|x|+1}}A_\ell(\cdot,y). 
\]
 Hence, 
for $3|x|<|y|$,  by  Theorem~\ref{thm:c11} we get
\[
\abs{\vd y\nabla^+\krt(x)}
\lesssim
\tx_{x+y}^{'2}\der{B_{|x|+1}}^2(A(\cdot,y))
\lesssim
\tx_{x+y}^{'2}\frac{(\tx\vee |x|)^2}{|y|^2}\tx_y\log|y|.
\]
%
%
%
Hence, for $x\neq 0$, with 
\[
g(y):=\left\{
\begin{array}{lr}
\frac{1}{|x-y|+1}\log(|x-y|\vee 2)+\frac{1}{|y|+1}\log(|y|\vee 2) &\text{ when }|y|\le 3|x|\\
\frac{|x|^2}{|y|^2}\log|y| &\text{ when }|y|>3|x|,
\end{array}
\right.
\]
we obtain,  for  $x\in\Z^d$ and $n\ge 1$,  
\[
\mb E[|\vd y\nabla^+\krt(x)/g(y)|^n]\lesssim\mb E[\tx^{5n}].
\]
Computations show that
\[
\sum_y g(y)^2\lesssim (\log(|x|\vee2))^3.
\]
Hence,  applying Lemma~\ref{lem:efron-stein} to $Z=\nabla^+\krt(x)$ we have
\[
\mb E\big[\exp\big(C|\frac{Z-\mb EZ}{(\log(|x|\vee 2))^{3/2}}|^p\big)\big]\le C_p
\quad\text{ for }p\in(0,\tfrac{2d}{10+d}).
\]
\end{proof}

\newpage
\newtheorem{atheorem}{Theorem}
\numberwithin{atheorem}{section}
\newtheorem{alemma}[atheorem]{Lemma}
\newtheorem{acorollary}[atheorem]{Corollary}

\appendix
\section{Appendix}
Define the parabolic operator $\ms L_\omega$ as
\[
\ms L_\omega u(x,t)=\sum_{y:y\sim x}\omega(x,y)[u(y,t)-u(x,t)]-\partial_t u(x,t)
\]
for every function $u:\Z^d\times\R\to\R$ which is differentiable in $t$. 
The following results are used in the paper.
\begin{atheorem}(\cite[Theorem~17]{DG-19})
\label{thm:harnack-para}
Assume $\frac{\omega}{\tr\omega}>2\kappa I$ for some $\kappa>0$. Any non-negative function $u$ with $\ms L_\omega u=0$ in $B_{2R}\times(0,4R^2)$ for $R>0$ satisfies
\[
\sup_{B_{R}\times(R^2,2R^2)}u\le 
C\inf_{B_R\times(3R^2,4R^2)}u.
\]
\end{atheorem}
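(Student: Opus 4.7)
The plan is to prove the discrete parabolic Harnack inequality by adapting the Krylov--Safonov strategy to the balanced lattice setting. I would split the argument into two halves: a weak $L^\varepsilon$ Harnack inequality for non-negative supersolutions that bounds a low average from above by $\inf u$, and an upper bound for $\sup u$ in terms of the same average for subsolutions. Composing the two on appropriately chosen forward and backward sub-cylinders then yields the statement of Theorem~\ref{thm:harnack-para}.

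The first key step is a discrete parabolic Aleksandrov--Bakel'man--Pucci (ABP) estimate: if $\mathscr{L}_\omega v \ge -f$ on the cylinder $Q = B_{2R}\times(0,4R^2)$ and $v\le 0$ on the parabolic boundary, then
\[
\max_Q v_+ \lesssim R^{d/(d+1)}\,\|f\|_{L^{d+1}(Q\cap K)},
\]
where $K$ is the space--time contact set with the concave envelope of $v$. Such a discrete estimate is available in the balanced setting (see e.g.\ Mustapha \cite{Mustapha-06} for the elliptic case and \cite{DG-19} for the parabolic one) and relies only on the uniform ellipticity $\omega/\tr\omega\ge 2\kappa\mathrm{I}$. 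From this ABP estimate I would deduce a parabolic growth lemma: if $u\ge 0$, $\mathscr{L}_\omega u=0$ in $Q$, and $u\ge 1$ on a non-negligible fraction of a lower sub-cylinder, then $u\ge \eta(d,\kappa)>0$ throughout a suitably placed upper sub-cylinder. Iterating this via the Krylov--Safonov stacked-cubes decomposition, adapted to discrete parabolic cubes of side comparable to $R$, yields the weak $L^\varepsilon$ Harnack inequality. A parallel Kuo--Trudinger covering argument, again driven by the ABP estimate, gives the $\sup$ bound for non-negative subsolutions; combining the two on the cylinders $B_R\times(R^2,2R^2)$ and $B_R\times(3R^2,4R^2)$ produces the stated inequality with a constant depending only on $d$ and $\kappa$.

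The main obstacle is the discrete parabolic ABP step. One must define a workable discrete concave envelope on the space--time lattice, verify that the resulting contact set carries enough lattice points to yield the desired measure bound, and control small-scale boundary layers at radius $\lesssim 1$ where the discretization visibly departs from continuum geometry. Once the discrete ABP estimate is in hand with constants depending only on $(d,\kappa)$, the remainder of the proof is a technical but essentially standard lattice transcription of parabolic Krylov--Safonov theory, so all of the genuine difficulty is concentrated in making the base estimates robust under discretization.
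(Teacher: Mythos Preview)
The paper does not prove this statement at all: Theorem~\ref{thm:harnack-para} is quoted in the Appendix as \cite[Theorem~17]{DG-19} and used as a black box, with no argument supplied. So there is no ``paper's own proof'' to compare your proposal against.

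That said, your outline is the right one for how such a result is actually established in the balanced lattice setting, and it matches the strategy carried out in \cite{DG-19}: a discrete parabolic ABP/maximum-principle estimate, a growth lemma obtained from it, iteration via the Krylov--Safonov cube decomposition to get the weak $L^\varepsilon$ Harnack inequality for supersolutions, a local maximum principle for subsolutions, and then the combination on staggered sub-cylinders. You have also correctly identified the genuine technical bottleneck (the discrete parabolic ABP step and the handling of scales $\lesssim 1$). If your goal is simply to use the theorem here, citing \cite{DG-19} as the paper does is sufficient; if your goal is to reproduce the proof, your plan is sound and the details are in \cite{DG-19} (and, for the elliptic analogue, in \cite{KT-96}).
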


As a consequence, we have the following H\"older regularity for $u$.
\begin{acorollary}\label{acor:Kry-Saf}
Assume $\frac{\omega}{\tr\omega}>2\kappa I$ for some $\kappa>0$.
There exists $\gamma=\gamma(d,\kappa)\in(0,1)$ such that any non-negative function $u$ with $\ms L_\omega u=0$ in $B_{R}(x_0)\times(t_0-R^2,t_0)$, for  some $(x_0,t_0)\in\Z^d\times\R$ and $R>0$, satisfies
\[
|u(\hat x)-u(\hat y)|\le C\left(\frac{r}{R}\right)^\gamma\sup_{B_R(x_0)\times(t_0-R^2,t_0)}u
\]
for all $\hat x,\hat y\in B_r(x_0)\times(t_0-r^2,t_0)$ and $r\in(0,R)$.
\end{acorollary}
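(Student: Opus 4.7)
This is the classical parabolic Hölder estimate that one extracts from a scale-invariant parabolic Harnack inequality, and my plan is to carry out that derivation in the discrete setting using Theorem~\ref{thm:harnack-para}. By the translation invariance of $\ms L_\omega$ in space and time, I may assume $x_0=0$ and $t_0=0$. For $r\in(0,R]$ write the backward parabolic cylinder $Q_r:=B_r\times(-r^2,0]$ and set
\[
M(r):=\sup_{Q_r}u,\quad m(r):=\inf_{Q_r}u,\quad \osc(r):=M(r)-m(r).
\]

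The core is to establish an oscillation-decay inequality $\osc(\rho r)\le\theta\osc(r)$ for all $r\le R$ at dyadic scales, with constants $\rho\in(0,1)$ and $\theta\in(0,1)$ depending only on $(d,\kappa)$. To do so, observe that both $u-m(r)$ and $M(r)-u$ are non-negative solutions of $\ms L_\omega(\cdot)=0$ on $Q_r$. After a fixed rescaling, pick disjoint "past" and "future" sub-cylinders $Q_{\mathrm{past}},Q_{\mathrm{future}}\subset Q_r$ (of spatial radius $\sim r/4$, with time intervals fitting the $(\tilde R^2,2\tilde R^2)$ and $(3\tilde R^2,4\tilde R^2)$ windows of Theorem~\ref{thm:harnack-para} with $\tilde R=r/4$) so that $Q_{\rho r}\subset Q_{\mathrm{future}}$ for some fixed $\rho\in(0,1)$. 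Applying Theorem~\ref{thm:harnack-para} separately to $u-m(r)$ and to $M(r)-u$ gives
\[
\inf_{Q_{\mathrm{future}}}u\ge m(r)+\tfrac{1}{C}\bigl(\sup_{Q_{\mathrm{past}}}u-m(r)\bigr),\qquad \sup_{Q_{\mathrm{future}}}u\le M(r)-\tfrac{1}{C}\bigl(M(r)-\inf_{Q_{\mathrm{past}}}u\bigr).
\]
Subtracting the first from the second and using the identity
\[
\bigl(\sup_{Q_{\mathrm{past}}}u-m(r)\bigr)+\bigl(M(r)-\inf_{Q_{\mathrm{past}}}u\bigr)=\osc(r)+\osc(Q_{\mathrm{past}})\ge\osc(r)
\]
yields $\osc(Q_{\mathrm{future}})\le(1-\tfrac{1}{C})\osc(r)$, and since $Q_{\rho r}\subset Q_{\mathrm{future}}$ the decay $\osc(\rho r)\le\theta\osc(r)$ holds with $\theta:=1-\tfrac{1}{C}$.

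Iterating the decay gives $\osc(\rho^k R)\le\theta^k\osc(R)\le\theta^k\sup_{Q_R}u$ for $k\ge 0$, and for arbitrary $r\in(0,R)$ choosing $k$ with $\rho^{k+1}R<r\le\rho^kR$ produces
\[
\osc(r)\le C(r/R)^\gamma\sup_{Q_R}u,\qquad \gamma:=\frac{\log(1/\theta)}{\log(1/\rho)}\in(0,1).
\]
Since any two points $\hat x,\hat y\in B_r\times(-r^2,0]$ lie in $Q_r$, $|u(\hat x)-u(\hat y)|\le\osc(r)$ and the stated Hölder estimate follows.

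The anticipated main obstacle is simply the parabolic-geometry bookkeeping: I need $Q_{\mathrm{past}}$, $Q_{\mathrm{future}}$, and the iterated cylinders $Q_{\rho^k r}$ to nest correctly into the asymmetric forward/backward time windows $(\tilde R^2,2\tilde R^2)$ and $(3\tilde R^2,4\tilde R^2)$ built into Theorem~\ref{thm:harnack-para}, with a ratio $\rho>0$ independent of $r$. A secondary, minor issue specific to the discrete setting is that for $r=O(1)$ the ball $B_r$ collapses; in that regime the estimate is trivial since $\osc(r)\le\sup_{Q_R}u$ and any bounded-scale loss can be absorbed into the constant $C$.
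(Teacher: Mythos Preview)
Your proposal is correct and follows the standard oscillation-decay argument that the paper implicitly invokes by stating the result as a corollary of Theorem~\ref{thm:harnack-para} without proof. The geometric bookkeeping you flag is routine: with $\tilde R=r/2$ and the Harnack cylinder time-shifted so that its top coincides with $t=0$, the ``future'' window $B_{r/2}\times(-r^2/4,0)$ contains $Q_{r/2}$, so $\rho=1/2$ works.
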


\subsection{Proof of Proposition~\ref{prop:harmonic}}
\label{asec:harmonic}
\begin{proof}
Let $p\in\fct_{j}$ be the $j$-th order Taylor polynomial (around 0) of $v$. Then
\[
\sup_{\B_{\theta R}}|v-p|\le C (\theta R)^{j+1}\sup_{\B_{R/3}}|D^{j+1} v|.
\]
This gives $\der{\B_{\theta R}}^{j+1}(v)\lesssim (\theta R)^{j+1}\sup_{\B_{R/3}}|D^{j+1} v|$. Furthermore, for any $q\in\fct_{j}$, $j\le 2$, note that $D(v-q)$ is an $\bar a$-harmonic function. Hence, by \cite[Theorem 2.10]{GiTr}, 
\begin{align*}
\sup_{\B_{R/3}}|D^{j+1} v|
&=\sup_{\B_{R/3}}|D^{j+1} (v-q)|\\
&\le 
\frac{C}{R^{j}}\sup_{\B_{5R/12}}|D(v-q)|\\
&=
\frac{C}{R^{j}}\sup_{x\in\B_{5R/12}}|\fint_{\B_{R/12}(x)}D(v-q)|
\le \frac{C}{R^{j+1}}\sup_{\B_{R/2}}|v-q|
\end{align*}
for $j\le 2$. 
Hence, taking infimum over $q\in\fct_j$,  we get $\der{\B_{\theta R}}^{j+1}(v)\lesssim\theta^{j+1}\der{\B_{R/2}}^{j+1}(v)$ for $j\le 2$. The first statement is proved.

To prove the second statement, observe that for any $x\in\B_{R/2}$, there are $2^d$ points $y_i\in\bar B_{R/2}$, $i\in\Lambda=\{1,\ldots,2^d\}$, such that $|y_i-x|\le 1$ and $x$ is a convex combination of the $y_i$'s. That is, $x=\sum_{i\in\Lambda}\alpha_i y_i$ for some $\alpha_i\ge0$ with $\sum_{i\in\Lambda}\alpha_i=1$. Let $p\in\fct_j$, $j\le 2$, be such that $\max_{B_{2R/3}}|v-p|\le 2\der{2R/3}^{j+1}(v)$ and denote the Hessian matrix of $p$ by $M_p$. Then, for $x\in\B_{R/2}$,  $j\le 2$,
\begin{align}\label{eq:221208-1}
|v(x)-p(x)|
&\le [v]_{1;\B_{R/2+1}}+
\sum_{i\in\Lambda}\alpha_i |v(y_i)-p(y_i)|+\Abs{p(x)-\sum_{i\in\Lambda}\alpha_ip(y_i)}\nn\\
&\le 
 [v]_{1;\B_{R/2+1}}+\max_{\bar B_{R/2}}|v-p|+CR|M_p|.
\end{align}	
Further,  using the fact (see \cite[Cor.6.3]{GiTr}) that 
\[
R[v]_{1;\B_{R/2+1}}\lesssim
\sup_{\B_{2R/3}}|v|+R^2|c_0|
\lesssim
\sup_{\partial\B_{2R/3}}|v|+R^2|c_0|
\]
and (Note that the following bound is not needed for the case $j=1$ where $M_p\equiv 0$.)
\[
R^2|M_p|
\lesssim
\max_{y\in B_{R/2}}|p(y)+p(-y)-2p(0)|
\lesssim
\max_{B_{R/2}}|v-p|+\max_{B_{R/2}}|v|
\lesssim
\der{2R/3}^3(v)+\max_{B_{R/2}}|v|,
\]
display \eqref{eq:221208-1} implies, for $j\le 2$,
\begin{align*}
\der{\B_{R/2}}^{j+1}(v)
\lesssim
\tfrac{1}{R}\sup_{\partial\B_{2R/3}}|v|+R|c_0|+\der{2R/3}^{j+1}(v).
\end{align*}
The second claim follows.
\end{proof}
\subsection{Verification of \eqref{eq:verti-der-phi}}
In this subsection we will verify the inequality
\[
\int_0^\infty (1+t)^{-d/2}\exp\left[
-\tfrac{t}{R^2}-c\mf h(|y|,t)
\right]\dd t
\lesssim 
e^{-c|y|/R}
v(|y|+1),
\quad
\forall y\in\Z^d.
\]
We break the integral on the left side of the above inequality as 
\[
\int_0^\infty=\int_0^{|y|/2}
+\int_{|y|/2}^{|y|^2}+\int_{|y|^2}^{\infty}
=:\text{I}+\text{II}+\text{III}.
\]
It suffices to consider the case $|y|\ge 1$.
First, with $c_2>0$  sufficiently small,
\begin{align*}
\text{I}=\int_0^{|y|/2}(1+t)^{-d/2}\exp\left(
-\tfrac{t}{R^2}-c|y|\log\tfrac{|y|}{t}
\right)\dd t
\le 
|y|e^{-c|y|}\lesssim
e^{-c_2|y|/R}v(|y|).
\end{align*}	
Moreover, noting that $-\tfrac{t}{2R^2}-c\tfrac{|y|^2}{t}\lesssim-\tfrac{|y|}{R}$,
\begin{align*}
\text{II}&=\int_{|y|/2}^{|y|^2}(1+t)^{-d/2}\exp\left(-\tfrac{t}{R^2}-c\tfrac{|y|^2}{t}\right)\dd t\\
&\lesssim
e^{-c|y|/R}
\int_{0}^{|y|^2}t^{-d/2}e^{-c|y|^2/t}\dd t\\
&\lesssim
e^{-c|y|/R}|y|^{2-d}\int_1^{\infty}s^{d/2-2}e^{-cs}\dd s
\lesssim
e^{-c|y|/R}v(|y|).
\end{align*}	
Similarly, for $d=2$,
\begin{align*}
\text{III}&\lesssim 
e^{-c|y|/R}\int_{|y|^2}^\infty
(1+t)^{-d/2}\exp\left(-\tfrac{t}{2R^2}\right)\dd t\\
&\lesssim 
e^{-c|y|/R}\int_{|y|^2/R^2}^{\infty}s^{-1}e^{-s/2}\dd s\\
&\lesssim
e^{-c|y|/R}\left[1+\int_0^\infty s^{-1}\mathbbm{1}_{\{|y|^2/R^2\le s\le 1\}}\dd s\right]
\lesssim
e^{-c|y|/R}v(y|).
\end{align*}	
For $d\ge 3$, we have 
\[
\text{III}\lesssim 
e^{-c|y|/R}\int_{|y|^2}^\infty
(1+t)^{-d/2}\dd t
\lesssim
e^{-c|y|/R}v(|y|).
\] 
Therefore, the above bounds of I, II, III imply inequality\eqref{eq:verti-der-phi}.

\subsection{Verification of \eqref{eq:nu2-sum}}
When $d=2$,
\begin{align*}
\int_0^\infty
e^{-cr/R} v(r)^2r^{d-1}\dd r
&\lesssim
\int_0^\infty e^{-cr/R}[1+(\log\tfrac{R}{(r+1)\wedge R})^2]r\dd r\\
&\lesssim
R^2+\int_1^R e^{-cr/R}\left(\log\frac{R}{r}\right)^2r\dd r\lesssim R^2.
\end{align*}	
When $d=3$,
\begin{align*}
\int_0^\infty
e^{-cr/R} v(r)^2r^{d-1}\dd r
\lesssim
\int_0^\infty e^{-cr/R}\lesssim R.
\end{align*}	
When $d=4$,
\begin{align*}
\int_0^\infty
e^{-cr/R}v(r)^2r^{d-1}\dd r
&\lesssim
\int_0^R(1+r)^{-1}\dd r
+\int_R^\infty e^{-cr/R}R^{-1}\dd r\lesssim\log R.
\end{align*}	
When $d\ge 5$,
\begin{align*}
\int_0^\infty
e^{-cr/R}v(r)^2r^{d-1}\dd r
&\lesssim
\int_0^\infty(1+r)^{-2}\dd r=1.
\end{align*}


\end{document}